\newtheorem{theorem}{Theorem}
\newtheorem{coro}[theorem]{Corollary}
\newtheorem{lemma}[theorem]{Lemma}
\newtheorem{prop}[theorem]{Proposition}
\newtheorem*{theoremA}{Theorem A}
\newtheorem*{theoremB}{Theorem B}
\theoremstyle{definition}
\theoremstyle{remark}
\numberwithin{equation}{section}
\newcommand{\subjclass}[2][2020]{%
  \let\@oldtitle\@title%
  \gdef\@title{\@oldtitle\footnotetext{#1 \emph{Mathematics subject classification.} #2}}%
}
\newcommand{\keywords}[1]{%
  \let\@@oldtitle\@title%
  \gdef\@title{\@@oldtitle\footnotetext{\emph{Key words and phrases.} #1.}}%
}
\newcommand{\Z}{\mathbb {Z}}
\newcommand{\R}{\mathbb {R}}
\newcommand{\Addresses}{{
  \bigskip
  \footnotesize

  O.~Bordell\`es, \textsc{2 all\'{e}e de la combe, 43000 Aiguilhe, France.}\par\nopagebreak
  \textit{E-mail address}: \texttt{borde43@wanadoo.fr}
}}
\DeclareMathOperator{\Li}{Li}
\DeclareMathOperator{\li}{li}
\DeclareMathOperator{\sign}{sign}
\DeclareMathOperator{\md}{mod}
\DeclareMathOperator{\un}{I}
\DeclareMathOperator{\de}{II}
\DeclareMathOperator{\tr}{III}
\DeclareMathOperator{\qua}{IV}
\DeclareMathOperator{\id}{id}
\title{{\bf An explicit result for short sums of positive arithmetic functions}}
\date{}
\author{Olivier Bordell\`es}
\subjclass[2020]{Primary 11A25; Secondary 11N37, 11N56.}
\keywords{Arithmetic functions, Mean values, Short intervals, Erd\H{o}s-Hooley $\Delta$ function}
\begin{document}

\maketitle

\begin{abstract}
We prove a totally explicit bound for short sums of certain non-negative arithmetic functions satisfying a general growth condition, and apply this result to derive two explicit estimates for the Erd\H{o}s-Hooley $\Delta$-function in short intervals.
\end{abstract}

\section{Introduction}

\subsection{Background and motivation}

In \cite{shiu80}, P. Shiu proved the following general result dealing with short sums of a large class of multiplicative functions.

\begin{theoremA}
Let $\left( \varepsilon ,\eta, \theta \right) \in \left( 0,\frac{1}{2}\right)^3$, $a < q \in \Z_{\geqslant 1}$ such that $\left( a,q \right) =1$, and $f$ be a non-negative multiplicative function satisfying
\begin{enumerate}
   \item[\scriptsize $\triangleright$] $\exists \,A  >  0$, $f\left( p^\alpha\right) \leqslant A^\alpha$,
   \item[\scriptsize $\triangleright$] $\exists \,B  =  B\left( \eta \right) >0$, $f(n) \leqslant B\,n^{\eta}$,
\end{enumerate}
for all prime numbers $p$ and $n,\alpha \in \Z_{\geqslant 1}$. Then, uniformly for $q < y^{1-\theta}$ and $1 \leqslant x^{\varepsilon} \leqslant y \leqslant x$, we have
$$\sum_{\substack{x-y <n \leqslant x \\ n \equiv a \; (\md q)}} f(n) \ll \frac{y}{\varphi(q) \log x} \exp \left( \sum_{\substack{p\leqslant x \\ p \nmid q}} \frac{f(p)}{p}\right).$$
\end{theoremA}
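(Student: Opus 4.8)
The plan is to factor each $n$ in the sum as $n=n_{1}n_{2}$, the product of its ``$z$-smooth part'' $n_{1}=\prod_{p^{\alpha}\|n,\ p\leqslant z}p^{\alpha}$ and its ``$z$-rough part'' $n_{2}$, for a well-chosen cut-off $z$, and to treat the two parts by entirely different tools: a sieve for $n_{2}$ and an Euler product for $n_{1}$. One fixes a small $\delta=\delta(\varepsilon,\eta,\theta)>0$ and takes $z=x^{\delta}$; the room to do so comes from the hypotheses $y\geqslant x^{\varepsilon}$ and $q<y^{1-\theta}$, which will require $\delta$ to be small compared with $\varepsilon\theta$. Since $(a,q)=1$ forces $(n,q)=1$, both $n_{1}$ and $n_{2}$ are coprime to $q$, and $n_{2}$ runs over the residue class $n_{2}\equiv a\overline{n_{1}}\ (\md q)$; by multiplicativity $f(n)=f(n_{1})f(n_{2})$, and the first hypothesis gives $f(n_{2})\leqslant A^{\Omega(n_{2})}$ with $\Omega(n_{2})\leqslant\log x/\log z=1/\delta$.

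\textbf{Main term.} First restrict to the generic range $n_{1}\leqslant M_{0}:=y/(qz^{C})$ ($C$ a fixed constant), so that the complementary interval is long; note $M_{0}\geqslant x^{c_{0}}$ with $c_{0}\asymp\varepsilon\theta$ once $\delta$ is small. Here I would bound the inner sum over $n_{2}$ by sorting $z$-rough integers by $\Omega(n_{2})=r$, peeling off the largest prime factor, applying the Brun--Titchmarsh inequality to that prime in a residue class mod $q$ within an interval whose length is $y$ divided by the product of the remaining factors, summing the remaining primes by Mertens ($\sum_{z<p\leqslant x}p^{-1}=\log(1/\delta)+O(1)$), and finally summing over $r$ --- the series converging for every fixed $A$ because the ordering of the prime factors of $n_{2}$ supplies a factorial denominator. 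This gives
\[
\sum_{\substack{x-y<n\leqslant x,\ n\equiv a\ (\md q)\\ n_{1}\leqslant M_{0}}}f(n)\ \ll_{A}\ \frac{y}{\varphi(q)\log z}\sum_{\substack{n_{1}\ z\text{-smooth}\\(n_{1},q)=1}}\frac{f(n_{1})}{n_{1}}\ \ll_{A,B,\eta}\ \frac{y}{\varphi(q)\log z}\exp\!\Bigl(\sum_{\substack{p\leqslant z\\p\nmid q}}\frac{f(p)}{p}\Bigr),
\]
the last step being the Euler product $\prod_{p\leqslant z,\,p\nmid q}\bigl(1+f(p)p^{-1}+f(p^{2})p^{-2}+\cdots\bigr)$: the bound $f(p^{\alpha})\leqslant A^{\alpha}$ controls the tail for $p>A$, while for the finitely many $p\leqslant A$ (where $\sum_{\alpha}(A/p)^{\alpha}$ diverges) the bound $f(p^{\alpha})\leqslant Bp^{\alpha\eta}$ makes the local factor finite --- so \emph{both} hypotheses are genuinely used. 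Since $\log z=\delta\log x$ and $\exp(\sum_{p\leqslant z}f(p)/p)\leqslant\exp(\sum_{p\leqslant x}f(p)/p)$, this is $\ll_{A,B,\eta,\delta}\frac{y}{\varphi(q)\log x}\exp(\sum_{p\leqslant x,\,p\nmid q}f(p)/p)$, as wanted.

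\textbf{The remaining range --- the main obstacle.} What is left are the $n$ whose $z$-smooth part is atypically large, $n_{1}>M_{0}$ (this includes the extreme case of the $z$-smooth integers in $(x-y,x]$ themselves). For such $n$ the complementary interval has length $<qz^{C}$, so Brun--Titchmarsh is no longer available, and the naive estimate --- inserting $f(n)\leqslant Bx^{\eta}$ and counting --- produces a term that is far too large. This is the technical heart of the argument: one must keep the multiplicative structure on $n_{1}$ and exploit that a $z$-smooth divisor exceeding $x^{c_{0}}$ is rare (its normalized density decays like $\rho(c_{0}/\delta)$), feeding this back --- effectively re-running the estimate at a finer scale for the smooth component rather than discarding it. The conditions $\varepsilon,\eta,\theta<\tfrac12$ enter exactly here, in arranging that the power-of-$x$ gains outweigh the losses, and the whole scheme hinges on choosing the single parameter $\delta$ small enough for this step while keeping the passages $\log z\to\log x$ and $\sum_{p\leqslant z}\to\sum_{p\leqslant x}$ cheap. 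I expect essentially all of the difficulty to be concentrated in this range rather than in the main-term extraction.
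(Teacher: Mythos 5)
Theorem~A is Shiu's theorem, which this paper only cites; but the paper's proof of Theorem~\ref{th:main} follows exactly the Erd\H{o}s--Wolke--Shiu scheme you are trying to reconstruct, so it is the right benchmark. Measured against it, your proposal has a genuine gap: the range you yourself flag as ``the main obstacle'' (smooth part $n_1>M_0$) is precisely where the whole difficulty of Shiu's theorem lives, and you do not give an argument for it, only a heuristic. The heuristic as stated does not suffice. First, rarity of smooth numbers (Dickman $\rho$) is not enough, because the sum is weighted by $f(n_1)$, which may be as large as $B\,n_1^{\eta}$ with $\eta$ up to $\tfrac12$; one needs a \emph{weighted} Rankin-type bound showing that $\sum_{z^{1/2}<a\leqslant z,\ P^+(a)\leqslant z^{1/r}} f(a)/a$ decays like $e^{-cr\log r}$ (Shiu's Lemma~4; compare Lemma~\ref{le:shiu_9} of this paper). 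Second, the extreme subcase is not governed by a fixed cut $z=x^{\delta}$ at all: the dangerous integers are those whose cofactor has \emph{no} prime factor below roughly $\log x\log_2 x$, and there one needs a bound of the strength $\Psi(z,\log x\log_2 x)\leqslant z^{1/4}$ (Lemma~\ref{le:shiu_4}), again via Rankin, not a density statement at a fixed power of $x$. Accordingly the correct decomposition is finer than your smooth/rough split: one writes $n=a_nb_n$ with $a_n$ the product of the \emph{smallest} prime factors subject to $a_n\leqslant z$ ($z$ a small power of $y$, not of $x$), and classifies according to the sizes of $a_n$ and of $P^-(b_n)$, summing over the ranges $z^{1/(r+1)}<P^-(b_n)\leqslant z^{1/r}$ and letting the sieve (for each $r$) and the Rankin bound (decay in $r$) beat each other --- this is Classes~I--IV in the paper's proof. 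Nothing in your sketch substitutes for these two lemmas, and the conditions $\varepsilon,\eta,\theta<\tfrac12$ do not by themselves rescue a trivial treatment of this range.

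There is also a soft spot in the part you do argue. Applying Brun--Titchmarsh to the largest prime factor $p$ of the rough part $n_2$ requires the interval in which $p$ varies, of length $y/(n_1\,n_2/p)$, to be long compared with $q$; but $n_2/p$ can be of size comparable to $x^{1/2}$ or larger (e.g.\ $n_2$ a product of two primes of similar size), in which case that interval has length $o(1)$ and the peeling argument returns nothing. This is exactly why Shiu (and the paper, via the large sieve in Lemma~\ref{le:shiu_2}) sieves the \emph{entire} rough cofactor --- counting integers of the short interval free of prime factors below $z^{1/2}$, which only needs $y/a\geqslant z$ --- rather than a single prime in an arithmetic progression. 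Finally, the contribution of $n$ divisible by a large prime power (the paper's Class~II) needs its own, easy but separate, estimate; it is absent from your sketch. So: main-term extraction is in the right spirit but needs the sieve applied to the whole cofactor, and the large-smooth-part range needs the two Rankin lemmas and the four-class decomposition, none of which your proposal supplies.
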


The idea of the proof goes back to Erd\H{o}s \cite{erd52}, and was next enhanced by Wolke \cite{wol71}. Shiu then applied the Erd\H{o}s-Wolke method to give a uniform bound for the sum over arithmetic progressions, enlarging the family of multiplicative functions $f$, and also giving a short-interval result. This very useful theorem was then generalized by Nair \cite{nair92}, which extended such ideas further to bound sums of the shape $\sum_{x-y < n \leqslant x} f(|P(n)|)$ where $P \in \Z[X]$, and then by Nair-Tenenbaum \cite{nair98} who proved a uniform bound in the multi-dimensional case and relaxing the hypothesis of multiplicativity of $f$. Subsequently, a Nair-Tenenbaum type bound uniform in the discriminant of the polynomial $P$ was given by Henriot \cite{hen12}. Let us also mention an interesting variation of Shiu's result, where the variable runs over some sifted sets, which was given by Pollack \cite{pol20}. In another direction, it is noteworthy to point out the breakthrough paper by Mat\"{o}maki-Radziwi\l\l~\cite{mat16}, in which the authors relate short averages of $\left[ -1, 1 \right]$-valued multiplicative functions to their long averages, by proving that, for any $1 \leqslant y \leqslant X$ such that $y = y(X) \to \infty$ as $X \to \infty$, the formula
$$\frac{1}{y} \sum_{x - y < n \leqslant x} f(n) = \frac{2}{X} \sum_{X/2 < n \leqslant X} f(n) + o(1)$$
holds for all but $o(X)$ integers in the interval $\left[ \frac{1}{2}X,X \right]$. This result has subsequently been enhanced and generalized in several directions, see \cite{mat20,man23,sun24} for instance.

\medskip

As far as we know, there is very few explicit results dealing with short sums of arithmetic functions in the literature. As a matter of fact, the only explicit bound we could find was proved by Landreau in his PhD Thesis \cite{lan87}, a particular version of which can be stated as follows.

\begin{theoremB}
Let $\ell \in \Z_{\geqslant 2}$ be fixed, and let $f : \Z_{\geqslant 1} \to \left[ 1,+ \infty \right)$ be a multiplicative, completely sub-multiplicative, prime-independent arithmetic function, such that the series $\sum_p \ \sum_{\alpha \geqslant 2} \frac{\left( f \left( p^\alpha \right) \right)^\ell}{p^\alpha}$ converges. Then, uniformly for all $x \geqslant 4^\ell$ and $y \geqslant 0$ such that $x^{1/\ell} \leqslant y \leqslant x$, we have
$$\sum_{x - y < n \leqslant x} f(n) \leqslant M_{\ell,f} \, y \, (\log x)^{f(p)^{2 \ell}}$$
where
$$M_{\ell,f} := 2 \ell^{-f(p)^{2 \ell}} f(p)^{\ell^2(\ell-1)} \, \exp \left( f(p)^{2 \ell} + \sum_p f(p)^\ell \sum_{\alpha = 2}^\infty \frac{f \left( p^\alpha \right)^\ell}{p^\alpha}\right).$$
\end{theoremB}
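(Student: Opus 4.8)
The plan is to run the classical Erd\H{o}s--Shiu argument in its fully explicit ``Landreau'' form, which for non-negative multiplicative functions is entirely elementary: first replace $f(n)$ pointwise by a sum over the divisors of $n$ not exceeding $x^{1/\ell}$, then exploit that each such divisor lies in at most $\approx 2y/d$ integers of $(x-y,x]$.

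\emph{Step 1 (the pointwise inequality).} Put $z:=x^{1/\ell}$, so $z\geqslant 4$ because $x\geqslant 4^{\ell}$. The heart of the proof is to establish, for every integer $n\leqslant x$,
$$f(n)\ \leqslant\ f(p)^{\ell^{2}(\ell-1)}\sum_{\substack{d\mid n\\ d\leqslant z}} g(d),$$
where $g$ is a fixed non-negative multiplicative function with $g(p)=f(p)^{2\ell}$ (this is what will produce the power $(\log x)^{f(p)^{2\ell}}$) and with values on higher prime powers arranged so that $\sum_{p}\sum_{\alpha\geqslant 2}g(p^{\alpha})/p^{\alpha}$ is dominated by $f(p)^{\ell}\sum_{p}\sum_{\alpha\geqslant 2}f(p^{\alpha})^{\ell}/p^{\alpha}$; here prime-independence is essential, since it turns ``$f(p)^{\ell}$'' into a genuine constant. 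One obtains this by writing $n=ab$ with $a$ the $z$-smooth part and $b$ the $z$-rough part of $n$. Since $b\leqslant x=z^{\ell}$ is a product of primes each exceeding $z$, it has at most $\ell-1$ prime factors counted with multiplicity, so complete sub-multiplicativity and prime-independence give $f(b)\leqslant f(p)^{\ell-1}$. For the smooth part, grouping the prime factors of $a$ greedily (largest first, closing a block as soon as the running product would pass $z$) yields a factorisation $a=d_{1}\cdots d_{m}$ with $d_{i}\mid a$, $d_{i}\leqslant z$, each block but the last exceeding $\sqrt{z}$, and hence $m\leqslant 2\ell$ because $\prod_{i}d_{i}=a\leqslant z^{\ell}$; then $f(a)\leqslant\prod_{i}f(d_{i})$, and one bounds this product by an appropriate block, the repeated-prime contributions being absorbed into the prime-power values of $g$ rather than through the too-crude $f(d)\leqslant f(p)^{\Omega(d)}$. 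I expect this step to be the main obstacle, precisely because one must at once (i) descend to divisors $\leqslant z$ even when the smooth part $a$ is as large as $x$, (ii) keep the multiplicative constant independent of $x$, and (iii) leave a weight $g$ whose Dirichlet series converges under the stated hypothesis.

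\emph{Step 2 (summation over the short interval).} Summing the pointwise bound over $x-y<n\leqslant x$ and interchanging the order of summation,
$$\sum_{x-y<n\leqslant x} f(n)\ \leqslant\ f(p)^{\ell^{2}(\ell-1)}\sum_{d\leqslant z} g(d)\,\#\{x-y<n\leqslant x:\ d\mid n\}\ \leqslant\ f(p)^{\ell^{2}(\ell-1)}\sum_{d\leqslant z} g(d)\Bigl(\tfrac{y}{d}+1\Bigr).$$
This is the one place the hypothesis $x^{1/\ell}\leqslant y$ is used: for $d\leqslant z=x^{1/\ell}\leqslant y$ one has $y/d\geqslant 1$, so $y/d+1\leqslant 2y/d$, whence
$$\sum_{x-y<n\leqslant x} f(n)\ \leqslant\ 2\,f(p)^{\ell^{2}(\ell-1)}\,y\sum_{d\leqslant z}\frac{g(d)}{d}.$$

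\emph{Step 3 (an effective Mertens estimate and bookkeeping).} It remains to bound $\sum_{d\leqslant z} g(d)/d$. Bounding it by its Euler product, dropping the cut-off in the local factors, and using $1+u+v\leqslant(1+u)(1+v)$ and $1+t\leqslant e^{t}$,
$$\sum_{d\leqslant z}\frac{g(d)}{d}\ \leqslant\ \prod_{p\leqslant z}\Bigl(1+\frac{f(p)^{2\ell}}{p}\Bigr)\exp\Bigl(f(p)^{\ell}\sum_{p}\sum_{\alpha\geqslant 2}\frac{f(p^{\alpha})^{\ell}}{p^{\alpha}}\Bigr)\ \leqslant\ e^{f(p)^{2\ell}}(\log z)^{f(p)^{2\ell}}\exp\Bigl(f(p)^{\ell}\sum_{p}\sum_{\alpha\geqslant 2}\frac{f(p^{\alpha})^{\ell}}{p^{\alpha}}\Bigr),$$
where the first product is handled by the explicit Mertens bound $\sum_{p\leqslant z}1/p\leqslant\log\log z+1$ (valid for $z\geqslant 4$) and the second series is finite by hypothesis. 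Finally $(\log z)^{f(p)^{2\ell}}=(\ell^{-1}\log x)^{f(p)^{2\ell}}=\ell^{-f(p)^{2\ell}}(\log x)^{f(p)^{2\ell}}$, so substituting into Step 2 and collecting the numerical factor $2$, the factor $\ell^{-f(p)^{2\ell}}$, the factor $f(p)^{\ell^{2}(\ell-1)}$, and the exponential reproduces exactly $M_{\ell,f}\,y\,(\log x)^{f(p)^{2\ell}}$ with $M_{\ell,f}$ as in the statement. The only quantitative points that require real care are the explicit exponents of $f(p)$ produced in Step 1 and the effective Mertens constant; everything else is formal.
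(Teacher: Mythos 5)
A preliminary remark: the paper itself contains no proof of Theorem~B --- it is quoted from Landreau's thesis \cite{lan87}, with only the comment that the proof ``rests on the key role played by certain small divisors''. Your plan is exactly that method: dominate $f(n)$ pointwise by a weighted sum over divisors $d\mid n$ with $d\leqslant z:=x^{1/\ell}$, count multiples of each such $d$ in $(x-y,x]$ using $y\geqslant x^{1/\ell}$ (so $y/d+1\leqslant 2y/d$), and finish with an explicit Mertens-type bound on the Euler product. Your Steps 2 and 3 are correct and routine, and the bookkeeping there does reproduce the shape of $M_{\ell,f}$ \emph{provided} Step 1 holds with precisely the announced constants.

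The genuine gap is Step 1, which you yourself flag as the main obstacle: the inequality $f(n)\leqslant f(p)^{\ell^{2}(\ell-1)}\sum_{d\mid n,\ d\leqslant z}g(d)$, with $g(p)=f(p)^{2\ell}$ and with the values $g(p^{\alpha})$, $\alpha\geqslant 2$, controlled by the exponent-$\ell$ hypothesis, is the entire content of Landreau's lemma, and your sketch does not establish it. Concretely, the greedy factorisation of the $z$-smooth part into at most $2\ell$ blocks $d_i\leqslant z$ gives, via complete sub-multiplicativity, $f(a)\leqslant\bigl(\max_i f(d_i)\bigr)^{2\ell}$, i.e.\ a bound of the form $f(n)\leqslant f(p)^{\ell-1}\sum_{d\mid n,\ d\leqslant z}f(d)^{2\ell}$; but taking $g(d)=f(d)^{2\ell}$ outright makes the prime-power part of $\sum_{d\leqslant z}g(d)/d$ involve $\sum_{p}\sum_{\alpha\geqslant 2}f(p^{\alpha})^{2\ell}/p^{\alpha}$, whose convergence is \emph{not} assumed --- only the exponent-$\ell$ series is. To obtain instead a weight whose $\alpha\geqslant 2$ contribution is $f(p)^{\ell}f(p^{\alpha})^{\ell}/p^{\alpha}$ (as in $M_{\ell,f}$), while keeping $g(p)=f(p)^{2\ell}$ and paying exactly the factor $f(p)^{\ell^{2}(\ell-1)}$, one must first split off the squarefull part of $n$ (or use an analogous device) before the block argument; you allude to ``absorbing the repeated-prime contributions into the prime-power values of $g$'' but never carry this out, and in particular the exponents $\ell^{2}(\ell-1)$ and $f(p)^{2\ell}$ are never derived. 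As it stands, Steps 2--3 are correct formal manipulation wrapped around an unproved core, so the proposal does not yet constitute a proof of Theorem~B.
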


The proof rests on the key role played by certain small divisors. This result is quite useful, but the exponent on the logarithm is almost always too large in the usual applications.  For instance, applied to the Dirichlet-Piltz divisor function $\tau_k$ with $\ell = 2$, this yields
$$\sum_{x - y < n \leqslant x} \tau_k(n) \leqslant  \left( 2 k^4 \left( \tfrac{e}{2} \right)^{k^4} \zeta(2)^{4^k k^3 M_k} \right) y \, (\log x)^{k^4}$$
for $x^{1/2} \leqslant y \leqslant x$, where $M_k := \underset{0 \leqslant h \leqslant 2k-2}{\max} \left( {k-1 \choose h+2}^2 + {2k-1 \choose h+2} + k^2 {2k-1 \choose h+1} \right)$.

\subsection{Main result}

In the light of the above considerations, the aim of this work is to prove a completely explicit bound for short sums of arithmetic functions, not necessarily multiplicative, yielding the true order of magnitude and uniform in a large class of functions.

\begin{theorem}[Main Theorem]
\label{th:main}
Let $\ell \geqslant 1$ and $f$ be a positive arithmetic function such that there exists $k \in \Z_{\geqslant 1}$ such that, for all $(m,n) \in \left( \Z_{\geqslant 1} \right)^2$, we have
\begin{equation}
   f(mn) \leqslant \tau_k(m) f(n). \label{eq:hyp_5}
\end{equation}
Then, uniformly for all $x \geqslant \exp \left( 7^{28} (12e\ell)^{28\log ( 192 e \ell ) } \right)$ and $x^{1/ \ell} \leqslant y \leqslant x$, we have
$$\sum_{x-y < n \leqslant x} f(n) \leqslant \Lambda (k,\ell) \, \frac{y}{\log x} \, \sum_{n \leqslant x} \frac{f(n)}{n}$$
with $\Lambda (k,\ell) := 12 \ell k^{6 \ell} + 5 \ell C_{k,\ell} + 9 \ell D_{k,\ell}$ and
\begin{enumerate}
   \item[\scriptsize $\triangleright$] $C_{k,\ell} := \left( 10 \ell (k-1) \right)^{k^{16 \ell} (k-1)}$,
   \item[\scriptsize $\triangleright$] $D_{k,\ell} := A_k \max \left( A_k, 2k^{5 \ell A_k} \right) + (2A_k)^{-A_k/4} e^{B_{k,\ell}/e} k^{5 \ell B_{k,\ell}} \delta_{\{A_k < B_{k,\ell}\}} + 5$,
   \item[\scriptsize $\triangleright$] $A_k := 125^k \times \frac{\np{1092}}{k^3}$ and $B_{k,\ell} := \frac{1}{2} e^{4/e} k^{20 \ell}$,
   \item[\scriptsize $\triangleright$] $\delta_{\{A_k < B_{k,\ell}\}} = 1$ if $A_k < B_{k,\ell}$ and $0$ otherwise.
\end{enumerate}
\end{theorem}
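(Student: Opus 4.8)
The plan is to carry out an explicit version of the Erd\H{o}s--Wolke--Shiu method, with Landreau's ``small divisors'' device adapted so that the sole structural input is the hypothesis \eqref{eq:hyp_5}. First I would record the two elementary consequences of \eqref{eq:hyp_5}: the static bound $f(n)\le\tau_k(n)f(1)$ (take $m=n$, then let $n\mapsto1$), and the flexible bound $f(n)\le\tau_k(d)\,f(n/d)$ for every $d\mid n$. Since $f(1)$ is one of the non-negative terms of $\sum_{n\le x}f(n)/n$, the static bound is already efficient precisely on the integers where $\tau_k(n)$ is abnormally small, and these I would dispose of by counting: (i) the prime powers $n=p^{\alpha}\in(x-y,x]$, where Brun--Titchmarsh (applied to $p\le x^{1/\alpha}$) together with $\tau_k(p^{\alpha})=\binom{\alpha+k-1}{k-1}$ gives a contribution $\ll_{k,\ell}\frac{y}{\log x}f(1)$; and (ii), after fixing a smoothness threshold $z=x^{1/\Theta}$ with $\Theta=\Theta(k,\ell)$ a suitably large fixed power, the ``sparse'' integers $n\le x$ whose $z$-smooth part is small --- these have few prime factors, are rare by a Mertens/Rankin inequality, and on them $f(n)\le f(1)\tau_k(n)$ loses little. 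Calibrating $\Theta$ and the precise ``sparse'' cut so that both contributions are $\ll\frac{y}{\log x}\sum_{n\le x}\frac{f(n)}{n}$ is what produces the summand $D_{k,\ell}$ and the numerology behind $A_k$, $B_{k,\ell}$.

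For the remaining $n$ --- those with a large $z$-smooth part $b=b(n)$ --- I would establish a Landreau-type extraction lemma: since $b$ is built from primes $\le z$, its divisors are ``$z$-dense'' on the logarithmic scale, and since $\tau_k(p^{\alpha})\le k^{\alpha}$ the multiplicative jumps of $\tau_k$ along divisors of $n$ are controlled; hence one can select a divisor $d\mid n$ inside a prescribed window $x^{1/\ell}/z<d\le x^{1/\ell}$ made of $z$-smooth integers with $\tau_k(n/d)$ bounded by a fixed power of $\tau_k(d)$. Substituting $f(n)\le\tau_k(d)f(n/d)$, inverting the order of summation, and bounding $\#\{n\in(x-y,x]:d\mid n\}\le y/d+1\le 2y/d$ (legitimate because $d\le x^{1/\ell}\le y$), one is reduced to a sum $\sum_{d}\tau_k(d)^{O(1)}d^{-1}$ over a narrow window of $z$-smooth integers times a genuinely short average of $f$ around $x/d$; as $d$ absorbs all the small primes of $n$, the complementary variable runs over a $z$-rough set, whose density $\asymp1/\log z\asymp1/\log x$ supplies the denominator, and the remaining short $f$-average is controlled by $\sum_{n\le x}f(n)/n$. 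The constants $12\ell k^{6\ell}$ and the enormous $C_{k,\ell}=(10\ell(k-1))^{k^{16\ell}(k-1)}$ are exactly the cost of this step --- the latter collapsing harmlessly when $k=1$, in which case \eqref{eq:hyp_5} forces $f\le f(1)$ and the theorem is immediate.

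The hard part is the extraction lemma and the arithmetic bookkeeping it entails: one must pin down, uniformly in $n$ and in every admissible pair $(k,\ell)$, the three-way trade-off between $\log d$, $\tau_k(d)$ and $\tau_k(n/d)$, and then verify that the stated lower bound on $x$ --- amounting roughly to $\log\log x\gg(\log\ell)^{2}$ --- is large enough to turn each comparison of the form ``$(\log x)^{c}\ge(\text{a constant depending on }k,\ell)$'' into a genuine inequality; this is precisely what forces the shape $\exp\!\big(7^{28}(12e\ell)^{28\log(192e\ell)}\big)$. Assembling the three pieces --- prime powers and sparse integers contributing $D_{k,\ell}$, the bulk contributing $k^{6\ell}$ and $C_{k,\ell}$ --- and adding them with the displayed factors $12\ell$, $5\ell$, $9\ell$ then yields $\Lambda(k,\ell)$ and closes the proof.
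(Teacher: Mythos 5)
Your central reduction is oriented the wrong way round, and this is a genuine gap, not a bookkeeping issue. In the main case you substitute $f(n)\leqslant\tau_k(d)f(n/d)$ with $d$ the small $z$-smooth divisor, so that $f$ ends up sitting on the large, $z$-rough cofactor $m=n/d$. But the hypothesis \eqref{eq:hyp_5} only bounds $f$ from above by $\tau_k$; it gives no lower bound, so once $f$ is on the rough cofactor the only way to proceed is $f(m)\leqslant\tau_k(m)f(1)$, and your reduction terminates in a bound of the shape $\frac{y}{\log z}\,f(1)\sum_{d}\tau_k(d)^{O(1)}d^{-1}$ over the smooth window. There is no mechanism to convert this into $\frac{y}{\log x}\sum_{n\leqslant x}f(n)/n$: the sentence ``the remaining short $f$-average is controlled by $\sum_{n\leqslant x}f(n)/n$'' is exactly the statement being proved (a short sum of $f$ over a rough set), so the argument is circular, and in the form you would need it is false --- take $f\equiv 1$ and $k=2$: then $\sum_{n\leqslant x}f(n)/n\asymp\log x$, while $\sum_{d}\tau_2(d)^{O(1)}/d$ over a window of length $z$ below $x^{1/\ell}$ is a higher power of $\log x$, so your chain of inequalities overshoots the asserted bound by powers of $\log x$. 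The Shiu orientation, which the paper follows, is the opposite one: keep $f$ on the small smooth part $a\leqslant z$ (so that $\sum_{a\leqslant z}f(a)/a\leqslant\sum_{n\leqslant x}f(n)/n$ appears), pay only $\tau_k(b)\leqslant k^{\Omega(b)}\leqslant k^{6\ell}$ on the rough part (bounded because all its prime factors exceed $z^{1/2}\geqslant x^{1/(6\ell)}$), and count the rough cofactor in the short interval by the sieve, which is where the factor $1/\log x$ comes from. Your proposed extraction lemma ($\tau_k(n/d)$ bounded by a power of $\tau_k(d)$ for a well-placed smooth $d$) is then neither available in the strength you need nor the right tool.

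Because of this inversion you also have no analogue of the paper's genuinely new ingredient: after the sieve step in the main class one is left with sums $\sum_{z^{1/2}<a\leqslant z,\;P^+(a)\leqslant z^{1/r}}f(a)/a$ weighted by $k^{5r\ell}$, and one must show these decay like $e^{-\frac r4\log 2r}$ uniformly over the non-multiplicative class defined by \eqref{eq:hyp_5}; this is Lemma~\ref{le:shiu_9} (Rankin's method with the Jordan function), and it --- together with the explicit smooth-number count of Lemma~\ref{le:shiu_4} --- is where $A_k$, $B_{k,\ell}$ and $D_{k,\ell}$ actually originate, rather than from prime powers and sparse integers as you suggest (the paper's exceptional classes are instead handled by the trivial bound $f(n)\leqslant C_{k,\ell}y^{1/16}$, which is where $C_{k,\ell}$ comes from). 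Finally, your remark that the case $k=1$ is immediate from $f\leqslant f(1)$ is not correct either: $y\,f(1)\leqslant\Lambda(1,\ell)\frac{y}{\log x}\sum_{n\leqslant x}f(n)/n$ would require $\sum_{n\leqslant x}f(n)/n\gg f(1)\log x$, which \eqref{eq:hyp_5} does not guarantee; even for $k=1$ one needs the sieve argument with $f$ kept on the smooth part.
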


The proof essentially follows the Erd\H{o}s-Wolke-Shiu method, but we add the following two refinements:
\begin{enumerate}
   \item[\scriptsize $\triangleright$] \cite[Lemma~1]{shiu80} is improved and given in a totally explicit form. See Lemma~\ref{le:shiu_4} and \eqref{eq:shiu_4} ;
   \item[\scriptsize $\triangleright$] Since $f$ is not necessarily multiplicative, \cite[Lemma~4]{shiu80} must be replaced by a completely new result, although still based upon Rankin's method. See Lemma~\ref{le:shiu_9}.
\end{enumerate}

\subsection{Application}

The Hooley $\Delta_k$-function is defined by
\begin{equation}
   \Delta_k (n) := \underset{u_1,\dotsc,u_{k-1} \in \R}{\max} \sum_{\substack{d_1 d_2 \dotsb d_{k-1} \mid n \\ e^{u_i} < d_i \leqslant e^{u_i + 1}}} 1. \label{eq:Hooley_fct}
\end{equation}
By convention, $\Delta_1 = \mathbf{1}$, and it is customary to set $\Delta_2 := \Delta$ and to call it the Erd\H{o}s-Hooley $\Delta$-function. The average order of $\Delta_k$ was first investigated by Hooley \cite{hoo79}, and then improved by Hall-Tenenbaum \cite{hall88}. When $k=2$, Hall-Tenenbaum's result was first improved by de la Bret\`{e}che-Tenenbaum \cite{breten22}, and then sharpened by Koukoulopoulos-Tao in \cite{koutao23}, which in turn was improved very recently in \cite{breten23} by de la Bret\`{e}che \& Tenenbaum again, who proved that, for large real $x$,
$$x (\log \log x)^{3/2} \ll \sum_{n \leqslant x} \Delta(n) \ll x (\log \log x)^{5/2}.$$
It seems to be very tricky to derive explicit upper bounds from the method of Hall-Tenenbaum, Koukoulopoulos-Tao or de la Bret\`{e}che-Tenenbaum, each one of them relying on induction processes dealing with properties of the moments
$$M_q(n) := \int_{- \infty}^{+ \infty} \Delta(n;u)^q \, \textrm{d}u$$
where $\Delta(n;u)$ is defined in \eqref{eq:Delta(n;u)} below. In contrast, using Hooley's technique allows us to get explicit results with quite reasonable implied constants.

It is well-known by \cite[Lemma~61.1]{hall88} that the function $\Delta_k$ satisfies \eqref{eq:hyp_5}. Hence applying the Main Theorem~\ref{th:main} yields the following corollaries.

\begin{coro}
\label{cor:Hooley}
Let $\ell \geqslant 1$. Uniformly for all $x \geqslant \exp \left( 7^{28} (12e\ell)^{28\log ( 192 e \ell ) } \right)$ and $x^{1/ \ell} \leqslant y \leqslant x$, we have
$$\sum_{x-y < n \leqslant x} \Delta(n) \leqslant \phi (\ell) \, y (\log x)^{-1+4/\pi}$$
where $\phi(\ell) := \np{16748} \Lambda (2,\ell)$.
\end{coro}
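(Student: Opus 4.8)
The plan is to derive Corollary~\ref{cor:Hooley} as a direct specialization of the Main Theorem~\ref{th:main} applied to the function $f = \Delta = \Delta_2$. First I would invoke \cite[Lemma~61.1]{hall88}, which (as the excerpt recalls) asserts precisely that $\Delta_k$ satisfies the hypothesis \eqref{eq:hyp_5}; in the case $k=2$ this reads $\Delta(mn) \leqslant \tau_2(m)\Delta(n) = \tau(m)\Delta(n)$, so the Main Theorem applies with $k=2$ and the same $\ell$. This immediately gives, uniformly for $x \geqslant \exp\left(7^{28}(12e\ell)^{28\log(192e\ell)}\right)$ and $x^{1/\ell}\leqslant y\leqslant x$,
$$\sum_{x-y<n\leqslant x}\Delta(n)\leqslant \Lambda(2,\ell)\,\frac{y}{\log x}\,\sum_{n\leqslant x}\frac{\Delta(n)}{n}.$$

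Next I would estimate the Dirichlet-type partial sum $\sum_{n\leqslant x}\Delta(n)/n$ explicitly. The natural route is partial summation from an explicit bound on $\sum_{n\leqslant t}\Delta(n)$; since the exponent $-1+4/\pi$ appears in the claimed result, the input must be an explicit version of a bound of the shape $\sum_{n\leqslant t}\Delta(n)\ll t(\log t)^{4/\pi-1}\cdot\log t = t(\log t)^{4/\pi}$, i.e.\ the classical Hall--Tenenbaum average order $\Delta(n)$ has mean $(\log x)^{4/\pi - 1}$ up to the constant $\np{16748}$. (This explicit constant $\np{16748}$ is presumably established in a preceding section of the paper via Hooley's method, as the text advertises; I would cite that lemma.) Feeding $\sum_{n\leqslant t}\Delta(n)\leqslant c\,t(\log t)^{4/\pi}$ into Abel summation yields $\sum_{n\leqslant x}\Delta(n)/n\leqslant c'\,(\log x)^{4/\pi}$ for $x$ large, where one tracks that the constant is absorbed into $\np{16748}$; the lower-order terms from partial summation are controlled using the assumed lower bound on $x$, which is enormous and makes all error terms negligible.

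Combining the two displays gives
$$\sum_{x-y<n\leqslant x}\Delta(n)\leqslant \np{16748}\,\Lambda(2,\ell)\,\frac{y}{\log x}\,(\log x)^{4/\pi} = \phi(\ell)\,y\,(\log x)^{-1+4/\pi},$$
with $\phi(\ell)=\np{16748}\,\Lambda(2,\ell)$ exactly as stated. The only points requiring care are: (i) verifying that the lower bound on $x$ in the Main Theorem is compatible with (and in fact dominates) any threshold needed for the explicit mean-value estimate of $\Delta$, so that a single uniform range suffices; and (ii) the bookkeeping in partial summation so that no stray multiplicative constant larger than what $\np{16748}$ already accommodates creeps in.

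The main obstacle is really step two: obtaining a clean, fully explicit bound $\sum_{n\leqslant x}\Delta(n)/n\leqslant \np{16748}(\log x)^{4/\pi}$ with that specific constant. This is where the Hooley-type argument (the Erd\H{o}s-Hooley $\Delta$-function machinery, exploiting $\Delta(mn)\leqslant\tau(m)\Delta(n)$ together with an explicit handle on $\int\Delta(n;u)\,\mathrm{d}u$-type quantities) has to be pushed through with every constant tracked; once that explicit mean-value lemma is in hand, the rest of the corollary is a two-line consequence of the Main Theorem and Abel summation. I would therefore isolate the explicit estimate for $\sum_{n\leqslant x}\Delta(n)$ as a separate lemma and reduce the proof of Corollary~\ref{cor:Hooley} to quoting it.
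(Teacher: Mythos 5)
Your plan is essentially the paper's own proof. The paper also proceeds by (i) noting that $\Delta$ satisfies \eqref{eq:hyp_5} with $k=2$ via \cite[Lemma~61.1]{hall88} and applying the Main Theorem to get $\sum_{x-y<n\leqslant x}\Delta(n)\leqslant \Lambda(2,\ell)\tfrac{y}{\log x}\sum_{n\leqslant x}\Delta(n)/n$, and (ii) quoting an explicit harmonic-sum bound proved by Hooley's method, namely Proposition~\ref{pro:Delta_bis} with $t=1$: the bound $\sum_{n\leqslant x}\Delta(n)/n<\np{16748}\,(\log ex)^{4/\pi}$ is obtained from $\sum_{n\leqslant x}\Delta(n)<\np{9380}\,x(\log ex)^{-1+4/\pi}$ by exactly the partial summation you describe, and the heavy analytic work (the $\tau(n;v)$ machinery with explicit constants) lives in that proposition, just as you anticipate.

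One bookkeeping slip to correct: the mean-value input must have the shape $\sum_{n\leqslant t}\Delta(n)\leqslant c\,t(\log et)^{-1+4/\pi}$ (the paper's $c=\np{9380}$), not $c\,t(\log t)^{4/\pi}$ as you write at one point; Abel summation applied to the latter would give $\sum_{n\leqslant x}\Delta(n)/n\ll(\log x)^{1+4/\pi}$, and the corollary would then lose a factor $\log x$. With the correct input, the partial summation contributes the factor $1+\pi/4$, which is precisely where the constant comes from ($\np{9380}\,(1+\pi/4)<\np{16748}$), and the enormous lower bound on $x$ (together with the small slack in $\np{16748}$) absorbs the harmless difference between $\log ex$ and $\log x$, confirming your points (i) and (ii).
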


\begin{coro}
\label{cor:Hooley_restricted_omega}
Let $\ell \geqslant 1$ and $j \in \Z_{\geqslant 1}$. Uniformly for all
$$x > \max \left( \exp \left( 7^{28} (12e\ell)^{28\log ( 192 e \ell ) } \right) \, , \, \exp \left( e^{\pi j/4} \right) \right)$$
and $x^{1/ \ell} \leqslant y \leqslant x$, we have
$$\sum_{\substack{x-y < n \leqslant x \\ \omega(n) \leqslant j}} \Delta(n) \leqslant \phi (\ell) \, \frac{y}{\log x} \left( \frac{4e \log \log x}{\pi j} \right)^j$$
where $\phi(\ell) := \np{16748} \Lambda (2,\ell)$.
\end{coro}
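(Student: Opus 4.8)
The plan is to deduce Corollary~\ref{cor:Hooley_restricted_omega} from the Main Theorem applied to the arithmetic function $f = \Delta \cdot \mathbf{1}_{\{\omega \leqslant j\}}$, so the first task is to check that this $f$ satisfies the growth hypothesis~\eqref{eq:hyp_5}. Since $\Delta = \Delta_2$ satisfies $\Delta(mn) \leqslant \tau_2(m) \Delta(n) = \tau(m)\Delta(n)$ by \cite[Lemma~61.1]{hall88}, and since the number of distinct prime factors is subadditive, $\omega(mn) \leqslant \omega(m) + \omega(n)$, one only needs that the restriction $\mathbf{1}_{\{\omega(mn)\leqslant j\}}$ can be controlled. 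The cleanest route is: if $\omega(mn) \leqslant j$ then in particular $\omega(n) \leqslant j$, hence $f(mn) = \Delta(mn)\mathbf{1}_{\{\omega(mn)\leqslant j\}} \leqslant \Delta(mn) \mathbf{1}_{\{\omega(n)\leqslant j\}} \leqslant \tau(m)\,\Delta(n)\mathbf{1}_{\{\omega(n)\leqslant j\}} = \tau_2(m) f(n)$, so~\eqref{eq:hyp_5} holds with $k=2$. This is the step that must be handled with a little care, but it is short.

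Granting that, the Main Theorem with $k=2$ gives, uniformly for $x \geqslant \exp(7^{28}(12e\ell)^{28\log(192e\ell)})$ and $x^{1/\ell} \leqslant y \leqslant x$,
$$\sum_{\substack{x-y<n\leqslant x \\ \omega(n)\leqslant j}} \Delta(n) \leqslant \Lambda(2,\ell)\,\frac{y}{\log x}\sum_{\substack{n\leqslant x \\ \omega(n)\leqslant j}}\frac{\Delta(n)}{n}.$$
So the remaining work is to bound the Dirichlet-type sum $\sum_{n\leqslant x,\ \omega(n)\leqslant j}\Delta(n)/n$. Here I would use the known pointwise/average bound underpinning Corollary~\ref{cor:Hooley}: the estimate $\sum_{n\leqslant x}\Delta(n)/n \ll (\log x)^{4/\pi}$ (the Hall--Tenenbaum--type input giving the exponent $4/\pi$, consistent with the $(\log x)^{-1+4/\pi}$ in Corollary~\ref{cor:Hooley}), now refined by the restriction $\omega(n)\leqslant j$. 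The natural device is Rankin's trick in the multiplicative variable counting prime factors: insert a factor $z^{\,\omega(n)}$ with $z\geqslant 1$ to be optimized, giving
$$\sum_{\substack{n\leqslant x\\ \omega(n)\leqslant j}}\frac{\Delta(n)}{n} \leqslant z^{-j}\sum_{n\leqslant x}\frac{\Delta(n)\,z^{\omega(n)}}{n}.$$
One then bounds $\sum_{n\leqslant x}\Delta(n)z^{\omega(n)}/n$ by comparison with $\prod_{p\leqslant x}(1 + z\Delta(p)/p + \dots)$-type Euler products, which contributes a power $(\log x)^{\kappa z}$ for the appropriate constant $\kappa$ (with $\kappa$ chosen so that $\kappa\cdot 1 = 4/\pi$ recovers Corollary~\ref{cor:Hooley}), the explicit numerical constant $\np{16748}$ being exactly the one already isolated there. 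Optimizing $z^{-j}(\log x)^{\kappa z}$ over $z>0$ gives $z = j/(\kappa\log\log x)$, which is $\geqslant 1$ precisely when $\log\log x \leqslant j/\kappa$, i.e. $x \leqslant \exp(e^{j/\kappa}) = \exp(e^{\pi j/4})$ — matching the extra hypothesis $x > \exp(e^{\pi j/4})$ in the statement, under which instead one simply takes $z=1$ and recovers the bound of Corollary~\ref{cor:Hooley}. Feeding $z = j/(\kappa\log\log x)$ back in yields
$$z^{-j}(\log x)^{\kappa z} = \left(\frac{\kappa\log\log x}{j}\right)^{j} e^{j} = \left(\frac{e\kappa\log\log x}{j}\right)^{j} = \left(\frac{4e\log\log x}{\pi j}\right)^{j},$$
which is exactly the factor appearing in the corollary, and collecting the constants $\phi(\ell) = \np{16748}\,\Lambda(2,\ell)$ finishes the proof.

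The main obstacle is the uniform, fully explicit control of the Euler-product bound for $\sum_{n\leqslant x}\Delta(n)z^{\omega(n)}/n$ with the correct constant $4/\pi$ in the exponent and a clean numerical prefactor: this is where the bound on $\Delta(p^\alpha)$ (namely $\Delta(p^\alpha)\leqslant \alpha+1$, since $\Delta(n)\leqslant\tau(n)$) and the convergence of the tail $\sum_p\sum_{\alpha\geqslant 2}(\alpha+1)z^{1}/p^\alpha$ must be made quantitative, uniformly for $1\leqslant z\leqslant j/(\kappa\log\log x)$. Everything else — the subadditivity of $\omega$, the Rankin optimization, and the threshold $x>\exp(e^{\pi j/4})$ that separates the two regimes $z=1$ and $z>1$ — is routine once Corollary~\ref{cor:Hooley} (equivalently, the $z=1$ case) is in hand.
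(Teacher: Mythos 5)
Your route is in substance the paper's: both hinge on a Rankin weight $t^{\omega(n)}$ optimized at $t=\pi j/(4\log\log x)$, combined with the explicit weighted mean value $\sum_{n\leqslant x}t^{\omega(n)}\Delta(n)/n\leqslant \np{16748}\,(\log ex)^{4t/\pi}$. The only structural difference is where the weight enters: the paper applies Theorem~\ref{th:main} directly to $f(n)=t^{\omega(n)}\Delta(n)$, which satisfies \eqref{eq:hyp_5} with $k=2$ because $t\leqslant 1$, and bounds the restricted short sum by $t^{-j}\sum_{x-y<n\leqslant x}t^{\omega(n)}\Delta(n)$; you instead apply the theorem to $f=\Delta\cdot\mathbf{1}_{\{\omega\leqslant j\}}$ (your verification of \eqref{eq:hyp_5} for this $f$ is correct, and the fact that it vanishes on $\{\omega>j\}$ is harmless since the proof of the theorem only uses $f\geqslant 0$) and then insert the weight in the long Dirichlet sum. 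Either order works and costs the same.

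Two points in your write-up need repair. First, the Rankin step $\sum_{n\leqslant x,\,\omega(n)\leqslant j}\Delta(n)/n\leqslant z^{-j}\sum_{n\leqslant x}z^{\omega(n)}\Delta(n)/n$ is valid for $0<z\leqslant 1$, not for $z\geqslant 1$ as you state: when $\omega(n)\leqslant j$ one needs $z^{\omega(n)-j}\geqslant 1$. Your description of the two regimes is correspondingly reversed; in fact the hypothesis $x>\exp\left(e^{\pi j/4}\right)$ is exactly what places the optimal $z=\pi j/(4\log\log x)$ in $(0,1)$, so once the constraint is stated the right way round, the optimization you carry out is the correct one (and is the paper's). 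Second, the input you flag as the main obstacle, namely $\sum_{n\leqslant x}z^{\omega(n)}\Delta(n)/n\leqslant \np{16748}\,(\log ex)^{4z/\pi}$ uniformly for $0\leqslant z\leqslant 1$, cannot be obtained by the Euler-product comparison you sketch: $\Delta$ is not multiplicative, so this Dirichlet sum is not majorized by a product over primes, and the exponent $4/\pi$ only appears after Hooley's reduction $\Delta(n)\leqslant\frac{1}{\sin 1}\int_0^1\left|\tau(n;v)\right|\mathrm{d}v$, which replaces $\Delta$ by the multiplicative functions $n\mapsto z^{\omega(n)}\left|\tau(n;v)\right|$ and then invokes Lemmas~\ref{le:mean} and~\ref{le:tau_restreint_bis}. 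Fortunately this bound is precisely the second inequality of Proposition~\ref{pro:Delta_bis}, proved in the paper with the weight $t^{\omega(n)}$ already included, so your argument closes simply by citing it rather than attempting to re-derive it.
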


\subsection{Notation}

In what follows, $f : \Z_{\geqslant 1} \to \R_{\geqslant 0}$ is an arithmetic function that always satisfies \eqref{eq:hyp_5}. $p$ always denotes a prime number, $j,k,m,n,r,s \in \Z_{\geqslant 1}$, $\ell,q \in \R_{\geqslant 1}$, $c_0:=\np{1.25506}$, $c \in \left[ 0,1 \right]$ and $\mathfrak{m} := \frac{2}{\pi} \left( \sqrt{\pi^2-4} - 2 \arccos \frac{2}{\pi} \right)$. Let $\log_k$ denote the $k$-fold iterated logarithm. The Hooley $\Delta_k$-function is defined in \eqref{eq:Hooley_fct}, and hence, for $k=2$, the Erd\H{o}s-Hooley $\Delta$-function is given by $\Delta(n) := \displaystyle \max_{u \in \R} \Delta(n;u)$ where, for all $u \in \R$,
\begin{equation}
   \Delta(n;u):= \sum_{\substack{d \mid n \\ e^{u} < d \leqslant e^{u + 1}}} 1. \label{eq:Delta(n;u)}
\end{equation}
The constants of the Main Theorem~\ref{th:main} will always be used in the whole text:
\begin{align}
   & C_{k,\ell} := \left( 10 \ell (k-1) \right)^{k^{16 \ell} (k-1)}, \label{eq:C_{k,l}} \\
   & D_{k,\ell} := A_k \max \left( A_k, 2k^{5 \ell A_k} \right) + (2A_k)^{-\frac{A_k}{4}} e^{\frac{1}{e}B_{k,\ell}} k^{5 \ell B_{k,\ell}}\delta_{\{A_k < B_{k,\ell}\}} + 5, \label{eq:D_{k,l}} \\
   & A_k := 125^k \times \tfrac{\np{1092}}{k^3}, \label{eq:A_k} \\
   & B_{k,\ell} := \tfrac{1}{2} e^{4/e} k^{20 \ell}. \label{eq:B_{k,l}} 
\end{align}
Finally, the notation $f(x) = M(x) + O^\star (R(x))$ means $\left| f(x) - M(x) \right | \leqslant R(x)$.

\section{Proof of Theorem~\ref{th:main}}

\subsection{Tools}

\subsubsection{Basics}

\begin{lemma}
\label{le:ineg_log}
Let $a,b > 0$ such that $ab \geqslant \frac{1}{4}e^2$ and set $c := \frac{e}{e-1}$. Then
$$x \geqslant  \left( 2^{\log 16} (ab)^{\log ( 16 a b ) } \right)^{c^2a}  \Longrightarrow \log x \geqslant a \left( \log (b \log x) \right)^2.$$
\end{lemma}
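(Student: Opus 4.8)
The plan is to set $L := \log x$ and reduce the statement to a one-variable monotonicity argument for the function $g(L) := L - a\bigl(\log(bL)\bigr)^2$ on a half-line. First I would check that the hypothesis on $x$ is exactly $L \geqslant L_0$, where $L_0 := c^2 a\bigl(\log(4ab)\bigr)^2$: taking logarithms of the threshold and expanding,
$$\log\!\Bigl(\bigl(2^{\log 16}(ab)^{\log(16ab)}\bigr)^{c^2 a}\Bigr) = c^2 a\Bigl(4(\log 2)^2 + 4\log 2\cdot\log(ab) + (\log(ab))^2\Bigr) = c^2 a\bigl(\log(4ab)\bigr)^2,$$
since $2\log 2 + \log(ab) = \log(4ab)$. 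So the claim becomes: $L \geqslant L_0 \implies g(L) \geqslant 0$.

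The key step — and the one that pins down the value $c = \frac{e}{e-1}$ — is the inequality
$$\log(bL_0) \leqslant c\,\log(4ab). \qquad (\star)$$
I would derive $(\star)$ from the elementary bound $\log u \leqslant u/e$: applying it to $u = t^{\beta}$ and dividing by $\beta$ gives $\log t \leqslant \dfrac{t^{\beta}}{e\beta}$ for all $\beta, t > 0$. Choosing $\beta := \frac{c-1}{2} = \frac{1}{2(e-1)}$, so that $e\beta = \frac{c}{2}$, this reads $\frac{c}{2}\log t \leqslant t^{(c-1)/2}$; squaring and multiplying through by $4ab$ with $t := 4ab$ yields $bL_0 = abc^2\bigl(\log(4ab)\bigr)^2 \leqslant (4ab)^c$, which is $(\star)$ after taking logarithms. (Only $t > 1$ is needed here, which holds since $ab \geqslant \frac14 e^2$.)

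With $(\star)$ established, the rest is bookkeeping. The endpoint value is nonnegative: $g(L_0) = c^2 a\bigl(\log(4ab)\bigr)^2 - a\bigl(\log(bL_0)\bigr)^2 \geqslant c^2 a\bigl(\log(4ab)\bigr)^2 - a c^2\bigl(\log(4ab)\bigr)^2 = 0$ by $(\star)$. Next I would show $g$ is nondecreasing on $[L_0,\infty)$: since $g'(L) = 1 - \dfrac{2a\log(bL)}{L}$ and $L\mapsto\dfrac{\log(bL)}{L}$ is decreasing whenever $bL > e$ — which holds throughout $[L_0,\infty)$ because $bL_0 \geqslant c^2 e^2 > e$ (using $ab\geqslant\frac14 e^2$ and $\log(4ab)\geqslant 2$) — it suffices to check $g'(L_0) \geqslant 0$, and indeed $\dfrac{2a\log(bL_0)}{L_0} \leqslant \dfrac{2ac\log(4ab)}{c^2 a(\log(4ab))^2} = \dfrac{2}{c\log(4ab)} \leqslant \dfrac{1}{c} < 1$ by $(\star)$. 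Hence $g(L) \geqslant g(L_0) \geqslant 0$ for all $L \geqslant L_0$, which is the desired implication.

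The only genuinely delicate point is $(\star)$: it is precisely where the constant $c = \frac{e}{e-1}$ is forced, the extremal case being $t\mapsto t^{(c-1)/2}/\log t$ having minimum exactly $\frac{c}{2}$. Producing it through the single clean inequality $\log u \leqslant u/e$, rather than by differentiating in the two variables $a$ and $b$ directly, keeps the whole argument fully explicit; everything else reduces to elementary estimates on one-variable functions.
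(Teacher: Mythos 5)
Your proof is correct, but it follows a genuinely different route from the paper's. The paper's argument is two lines: under the stated hypotheses it rewrites the target inequality exactly as $x \geqslant \exp\bigl(4a\,W_{-1}^2\bigl(-\tfrac{1}{2\sqrt{ab}}\bigr)\bigr)$, with $W_{-1}$ the lower real branch of the Lambert $W$-function, and then quotes the explicit bound $W_{-1}(t) \geqslant c\log(-t)$ for $-\tfrac{1}{e} \leqslant t < 0$ from Alzahrani--Salem; unwinding that bound gives precisely the threshold $\exp\bigl(c^2a(\log(4ab))^2\bigr) = \bigl(2^{\log 16}(ab)^{\log(16ab)}\bigr)^{c^2a}$, the same $L_0$ you compute. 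You never touch $W_{-1}$: your inequality $(\star)$, obtained from $\log u \leqslant u/e$ with the choice $\beta = \tfrac{c-1}{2}$, is in effect a self-contained proof of the one instance of that Lambert-function bound that is actually needed, and your endpoint-plus-monotonicity analysis of $g(L) = L - a(\log(bL))^2$ replaces the exact solution of the inequality by the weaker but sufficient statement that $[L_0,\infty)$ lies in the solution set. What you gain is an elementary argument with no external reference; what the paper gains is brevity and an exact characterization of the admissible range. Two points to make explicit if you write this up: squaring $(\star)$ to get $g(L_0)\geqslant 0$ requires $\log(bL_0) \geqslant 0$, which does hold since $bL_0 = abc^2(\log(4ab))^2 \geqslant c^2e^2$, but you only record this later in the monotonicity step; and the reduction to checking $g'(L_0)\geqslant 0$ rests on $g'$ being nondecreasing on $[L_0,\infty)$ via the decrease of $L\mapsto \log(bL)/L$ for $bL>e$, which you state correctly.
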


\begin{proof}
Since $ab \geqslant \frac{1}{4}e^2$ and $x \geqslant e^{e^2/b}$,
$$\log x \geqslant a \left( \log (b \log x) \right)^2 \iff x \geqslant \exp \left(4 a W_{-1}^2 \left(- \tfrac{1}{2 \sqrt{ab}} \right) \right),$$
where $W_{-1}$ is the second real-valued branch of the Lambert $W$-function. The result then follows by applying the lower bound $W_{-1} (x) \geqslant c \log(-x)$ for $-\frac{1}{e} \leqslant x < 0$, which can be found in \cite[Theorem~3.1]{alz18}.
\end{proof}

\begin{lemma}
\label{le:ineg_r}
Let $r,k \in \Z_{\geqslant 1}$. Set $c_1:=\frac{63 \times 2^{2/3}}{20}$ and $A_k$ given in \eqref{eq:A_k}. Then
$$r \geqslant A_k \Longrightarrow c_1 5^{k}k r^{2/3} \leqslant \tfrac{r}{12}  (\log r)^{2}.$$ 
\end{lemma}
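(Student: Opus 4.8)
The plan is to convert the two-variable implication into a one-variable monotonicity statement and then close it by a numerical estimate with plenty of room to spare. Dividing the target inequality by $\tfrac1{12}r^{2/3}>0$, it is equivalent to prove that $r\geq A_k$ forces $12c_1 5^k k\leq r^{1/3}(\log r)^2$. Put $g(r):=r^{1/3}(\log r)^2$; then $g'(r)=r^{-2/3}(\log r)\bigl(\tfrac13\log r+2\bigr)\geq 0$ for all $r\geq 1$, so $g$ is nondecreasing on $[1,\infty)$. Since $A_k=125^k\cdot 1092\cdot k^{-3}\geq 1092>1$ (using $125^k\geq k^3$ for $k\geq1$), it is enough to verify $12c_1 5^k k\leq g(A_k)$.

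For this I would plug in $A_k=5^{3k}\cdot 1092\cdot k^{-3}$, so that $A_k^{1/3}=5^k\cdot 1092^{1/3}\cdot k^{-1}$; the inequality $12c_1 5^k k\leq A_k^{1/3}(\log A_k)^2$ then becomes, after cancelling $5^k$ and multiplying by $k$,
$$12c_1 k^2\leq 1092^{1/3}\,(\log A_k)^2.$$
Now $\log A_k=3k\log 5+\log 1092-3\log k$; dropping the positive term $\log 1092$ and using $\log k\leq k/e$ for every $k\geq1$ (the maximum of $k\mapsto\log k-k/e$ is $0$, attained at $k=e$), we get $\log A_k\geq\bigl(3\log5-3/e\bigr)k>0$. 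Squaring, the claim reduces to the constant inequality
$$12c_1\leq 1092^{1/3}\bigl(3\log5-3/e\bigr)^2,$$
i.e.\ $\tfrac{189}{5}\,2^{2/3}\leq 1092^{1/3}\bigl(3\log5-3/e\bigr)^2$; the left side is about $60.0$ and the right side about $142.8$, so this holds and the proof is complete.

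There is no structural difficulty here: the $5^k$ factor on the left is exactly matched by the $5^k$ coming out of $A_k^{1/3}$, and the residual comparison between $k^2$ and $(\log A_k)^2\asymp k^2$ carries a comfortable factor of more than $2$, so only crude numerical bounds on $\log 5$, $2^{2/3}$ and $1092^{1/3}$ are needed. The only thing to be careful about is the bookkeeping of these explicit constants.
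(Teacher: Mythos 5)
Your proof is correct, and it takes a genuinely different (and more elementary) route than the paper. The paper also reduces the claim to $r(\log r)^6\geqslant(12c_1\cdot 5^kk)^3$, but then inverts this inequality via the Lambert $W$ function: using the lower bound $W_0(t)\geqslant\log t-\log\log t\geqslant\tfrac12\log t$, it derives an explicit sufficient threshold of the form $\np{18967}\,c_k^3(\log c_k)^{-6}$ with $c_k=5^kk$, and finally checks that this threshold is below $A_k$. You instead observe that $g(r)=r^{1/3}(\log r)^2$ is nondecreasing on $[1,\infty)$, so it suffices to verify the inequality at $r=A_k$ itself, where the algebra is clean because $A_k^{1/3}=5^k\cdot\np{1092}^{1/3}/k$ exactly cancels the $5^k$ on the left; the remaining comparison $12c_1k^2\leqslant\np{1092}^{1/3}(\log A_k)^2$ follows from $\log A_k\geqslant(3\log 5-3/e)k>0$ (valid since $\log k\leqslant k/e$ and $A_k\geqslant\np{1092}$), and the final constant check $60.1\leqslant 142$ has ample margin. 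Your monotonicity step and the positivity needed before squaring are both justified, so the argument is complete; it avoids the Lambert $W$ machinery entirely, at the mild cost of being tailored to the specific shape of $A_k$, whereas the paper's method yields a general threshold for inequalities of the type $r(\log r)^6\geqslant C$ that it then compares with $A_k$.
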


\begin{proof}
Set $c_2 := 12 c_1$ and $c_k := 5^{k}k$. We have to prove that
\begin{equation}   
   r \geqslant A_k \Longrightarrow r (\log r)^{6} \geqslant(c_2c_k)^3 . \label{eq:ineq_r}
\end{equation}
But, since $r \geqslant 1$ and $c_2c_k \geqslant e$
$$r (\log r)^{6} \geqslant(c_2c_k)^3  \iff r \geqslant (c_2c_k)^3 \left( 6 W_0 \left( \tfrac{1}{6} \sqrt{c_2c_k}\right) \right)^{-6}$$
where $W_0$ is the first real-valued branch of the Lambert $W$-function. It is known \cite{hoor08} that, for $t \geqslant e$, $W_0(t) \geqslant \log t - \log \log t \geqslant \frac{1}{2} \log t$, so that
$$r \geqslant (c_2c_k)^3 \left( 3 \log \left( \tfrac{1}{6} \sqrt{c_2c_k} \right) \right)^{-6} \Longrightarrow r (\log r)^{6} \geqslant(c_2c_k)^3 $$
and noticing that $\sqrt{c_2} > 6$ and $(2/3)^6 c_2^3 < \np{18967}$, we derive
$$r \geqslant \np{18967} c_k^3 (\log c_k)^{-6} \Longrightarrow r (\log r)^{6} \geqslant(c_2c_k)^3.$$
Now \eqref{eq:ineq_r} follows with the bound
$$\np{18967} c_k^3 (\log c_k)^{-6} = \np{18967} \dfrac{k^3 \, 125^k}{(k \log 5 + \log k)^6} < A_k. \eqno\qedhere$$
\end{proof}

\begin{lemma}
\label{le:ineg_tau_k}
Let $k \in \Z_{\geqslant 2}$ be fixed. Then, for all $n \in \Z_{\geqslant 1}$ and all $0 < \varepsilon \leqslant \frac{k-1}{6}$,
$$\tau_k(n) \leqslant \min \left( k^{\Omega(n)} \, , \, \left( \tfrac{3(k-1)}{5\varepsilon}\right)^{k^{1/\varepsilon}(k-1)} \, n^\varepsilon  \right).$$
\end{lemma}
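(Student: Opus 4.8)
The plan is to establish the two entries of the minimum separately, in both cases using that $\tau_k$ is multiplicative with $\tau_k(p^\alpha) = \binom{\alpha+k-1}{k-1}$ on prime powers.

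\emph{The bound $\tau_k(n) \le k^{\Omega(n)}$.} It suffices to prove $\tau_k(p^\alpha) \le k^\alpha$ on prime powers and then multiply. One observes that $\binom{\alpha+k-1}{k-1}$ counts the monomials of degree $\alpha$ in $k$ indeterminates (equivalently, the weakly increasing words of length $\alpha$ over $\{1,\dotsc,k\}$), which is at most the number $k^\alpha$ of terms in the full expansion of $(x_1 + \dotsb + x_k)^\alpha$ (equivalently, the number of all words of length $\alpha$ over $\{1,\dotsc,k\}$). Hence $\tau_k(p^\alpha) \le k^\alpha$, and $\tau_k(n) = \prod_{p^\alpha \| n} \tau_k(p^\alpha) \le \prod_{p^\alpha \| n} k^\alpha = k^{\Omega(n)}$.

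\emph{The bound $\tau_k(n) \le \bigl(\tfrac{3(k-1)}{5\varepsilon}\bigr)^{k^{1/\varepsilon}(k-1)} n^\varepsilon$.} Since $n \mapsto \tau_k(n)/n^\varepsilon$ is multiplicative, $\tau_k(n)/n^\varepsilon \le \prod_{p^\alpha \| n} M_p$ with $M_p := \sup_{\alpha \ge 0} \tau_k(p^\alpha) p^{-\alpha\varepsilon} \ge 1$. If $p \ge k^{1/\varepsilon}$, the pointwise bound just proved gives $\tau_k(p^\alpha) \le k^\alpha \le p^{\alpha\varepsilon}$, hence $M_p = 1$; thus only the primes $p < k^{1/\varepsilon}$ contribute, and there are fewer than $k^{1/\varepsilon}$ of them. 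For such a prime $p \ge 2$ I will use $\tau_k(p^\alpha) = \prod_{j=1}^{k-1} \tfrac{\alpha+j}{j} \le (\alpha+1)^{k-1}$, so that $M_p \le \sup_{t \ge 0} (t+1)^{k-1} 2^{-t\varepsilon}$; the logarithm of $t \mapsto (t+1)^{k-1} 2^{-t\varepsilon}$ is concave with maximiser $t+1 = \tfrac{k-1}{\varepsilon \log 2}$, which exceeds $1$ because $\varepsilon \le \tfrac{k-1}{6}$ forces $\tfrac{k-1}{\varepsilon} \ge 6$, and this yields $M_p \le \bigl(\tfrac{k-1}{e\varepsilon\log 2}\bigr)^{k-1} 2^\varepsilon$.

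It then remains to check $\bigl(\tfrac{k-1}{e\varepsilon\log 2}\bigr)^{k-1} 2^\varepsilon \le \bigl(\tfrac{3(k-1)}{5\varepsilon}\bigr)^{k-1}$, i.e.\ after cancellation $2^\varepsilon \le \bigl(\tfrac{3e\log 2}{5}\bigr)^{k-1}$; since the hypothesis gives $2^\varepsilon \le 2^{(k-1)/6} = (2^{1/6})^{k-1}$, it suffices that $2^{1/6} \le \tfrac{3e\log 2}{5}$, which holds numerically ($1.1224\ldots < 1.1304\ldots$). This numerical comparison — pinning down the constant $3/5$ and using exactly the constraint $\varepsilon \le \tfrac{k-1}{6}$ — is the only delicate point; everything else is routine. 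Multiplying $M_p \le \bigl(\tfrac{3(k-1)}{5\varepsilon}\bigr)^{k-1}$ over the at most $k^{1/\varepsilon}$ contributing primes gives $\tau_k(n)/n^\varepsilon \le \bigl(\tfrac{3(k-1)}{5\varepsilon}\bigr)^{k^{1/\varepsilon}(k-1)}$, and combining with the first bound yields the stated minimum.
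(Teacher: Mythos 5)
Your proof is correct and follows essentially the same route as the paper: the splitting of primes at $k^{1/\varepsilon}$, the per-prime bound $\tau_k(p^\nu)\leqslant(\nu+1)^{k-1}$ with $p\geqslant 2$, the optimization of $(\nu+1)^{k-1}2^{-\nu\varepsilon}$ at $\nu+1=\tfrac{k-1}{\varepsilon\log 2}$, and the comparison with the constant $\tfrac{3}{5}$ via $\varepsilon\leqslant\tfrac{k-1}{6}$ all match the paper's argument. The only differences are cosmetic: you prove $\tau_k(p^\alpha)\leqslant k^\alpha$ combinatorially where the paper cites it, and you make explicit the final numerical check $2^{1/6}<\tfrac{3e\log 2}{5}$ and the multiplication over the at most $k^{1/\varepsilon}$ small primes, which the paper leaves implicit.
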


\begin{proof}
The inequality $\tau_k(n) \leqslant k^{\Omega(n)}$ is well-known and also valid for $k=1$, see for example \cite[(9)]{san89}. As for the $2$nd inequality, for $k \geqslant 2$, we have
\begin{align*}
   \frac{\tau_k(n)}{n^\varepsilon} &= \prod_{p^\nu \| n} \frac{\tau_k \left( p^\nu \right)}{p^{\nu \varepsilon}} = \Biggl( \prod_{\substack{p^\nu \| n \\ p < k^{1/\varepsilon}}} \times \prod_{\substack{p^\nu \| n \\ p \geqslant k^{1/\varepsilon}}} \Biggr) {k + \nu - 1 \choose \nu} \frac{1}{p^{\nu \varepsilon}} \\
   & \leqslant \prod_{\substack{p^\nu \| n \\ p < k^{1/\varepsilon}}}  \frac{(\nu+1)^{k-1}}{p^{\nu \varepsilon}} \times \prod_{\substack{p^\nu \| n \\ p \geqslant k^{1/\varepsilon}}} \underbrace{\frac{k^\nu}{p^{\nu \varepsilon}}}_{\leqslant 1} \leqslant \prod_{\substack{p^\nu \| n \\ p < k^{1/\varepsilon}}} \frac{(\nu+1)^{k-1}}{2^{\nu \varepsilon}}
\end{align*}
and the function $\nu \longmapsto (\nu+1)^{k-1} 2^{-\nu \varepsilon}$ reaches on $\left[ 1,+ \infty \right)$ a maximum in $\nu = \frac{k-1}{\varepsilon \log 2} - 1$ with value $2^\varepsilon \left( \frac{k-1}{\varepsilon \, e \log 2}\right)^{k-1} \leqslant \left( \frac{3(k-1)}{5\varepsilon}\right)^{k-1}$ since $\varepsilon \leqslant \frac{k-1}{6}$, completing the proof.
\end{proof}

\begin{lemma}
\label{le:factorial}
For all $N \in \Z_{\geqslant 1}$
$$\prod_{n=1}^N n! \leqslant N^{\frac{1}{2}(N+1)^2} \, e^{-\frac{3}{4}(N^2-1)}.$$
\end{lemma}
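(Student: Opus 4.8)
The plan is to pass to logarithms and recognise $\log\prod_{n=1}^N n!$ as a combination of $\log N!$ and the hyperfactorial sum $\sum_{m=1}^N m\log m$. Writing $\log n!=\sum_{m=1}^n\log m$ and interchanging the order of summation gives the identity
$$\log\prod_{n=1}^N n! \;=\; \sum_{m=1}^N (N-m+1)\log m \;=\; (N+1)\log N!\,-\,\sum_{m=1}^N m\log m,$$
so it is enough to bound $\log N!$ from above and $\sum_{m=1}^N m\log m$ from below, and then to check that the two estimates recombine to yield exactly $\tfrac12(N+1)^2\log N-\tfrac34(N^2-1)$.

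For the first term I would use the elementary one-sided Stirling bound $n!\leqslant e\,n^{\,n+1/2}e^{-n}$ for $n\geqslant 1$, i.e. $\log N!\leqslant N\log N-N+\tfrac12\log N+1$; this is classical (the sequence $n!\,e^{n}n^{-n-1/2}$ is decreasing with first term $e$, the monotonicity amounting to $(n+\tfrac12)\log(1+\tfrac1n)\geqslant 1$) and may simply be quoted. For the second term I would exploit the convexity of $x\mapsto x\log x$ on $[1,\infty)$: the trapezoidal inequality $\int_k^{k+1}x\log x\,\mathrm{d}x\leqslant \tfrac12\bigl(k\log k+(k+1)\log(k+1)\bigr)$, summed over $1\leqslant k\leqslant N-1$, gives $\sum_{m=1}^N m\log m\geqslant \int_1^N x\log x\,\mathrm{d}x+\tfrac12 N\log N$; since $\int_1^N x\log x\,\mathrm{d}x=\tfrac{N^2}{2}\log N-\tfrac{N^2}{4}+\tfrac14$, this reads $\sum_{m=1}^N m\log m\geqslant \tfrac{N^2}{2}\log N-\tfrac{N^2}{4}+\tfrac14+\tfrac{N}{2}\log N$.

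Finally I would substitute both estimates into the identity above: the coefficient of $\log N$ collapses to $(N^2+N)+\tfrac{N+1}{2}-\tfrac{N^2}{2}-\tfrac N2=\tfrac12(N+1)^2$ and the remaining terms to $-N^2+1+\tfrac{N^2}{4}-\tfrac14=-\tfrac34(N^2-1)$, and exponentiating gives the claim (with equality at $N=1$, where both auxiliary estimates are equalities). I expect the only real subtlety to be calibrating the two inequalities at exactly the right strength: the trapezoidal correction $\tfrac12 N\log N$ is precisely what produces the correct $\log N$-coefficient, so replacing the lower bound on $\sum m\log m$ by the bare integral, or weakening the Stirling bound, overshoots the target by a term of order $N\log N$. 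A workable but messier alternative is induction on $N$: after dividing through by the hypothesis, the inductive step reduces to $\log(N+1)!\leqslant \tfrac{(N+2)^2}{2}\log(N+1)-\tfrac{(N+1)^2}{2}\log N-\tfrac34(2N+1)$, which goes through for $N\geqslant 3$ once one expands $\log(1+1/N)$, the cases $N=1,2,3$ being checked directly; the direct computation above is cleaner and I would use it.
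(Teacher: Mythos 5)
Your proposal is correct, and it reaches the bound by a genuinely different route than the paper. Both arguments start from the same rearrangement $\log\prod_{n\leqslant N}n!=\sum_{h=1}^{N}(N-h+1)\log h$, but the paper then applies the Euler--Maclaurin formula (with the $B_2(\{x\})$ remainder bounded pointwise by $-\tfrac1{12}\leqslant B_2(\{x\})\leqslant\tfrac16$) directly to this weighted sum, so that the main term is visibly the exact integral $\int_1^N(N-x+1)\log x\,\mathrm{d}x=\tfrac12(N+1)^2\log N-\tfrac34(N^2-1)$ and the corrections are absorbed in one stroke. You instead split the sum as $(N+1)\log N!-\sum_{m\leqslant N}m\log m$ and estimate the two pieces separately, via the one-sided Stirling bound $N!\leqslant e\,N^{N+1/2}e^{-N}$ and the trapezoidal (convexity) inequality $\sum_{m\leqslant N}m\log m\geqslant\int_1^N x\log x\,\mathrm{d}x+\tfrac{N}{2}\log N$; I checked the recombination, and the $\log N$-coefficient and the constant term come out exactly as $\tfrac12(N+1)^2$ and $-\tfrac34(N^2-1)$, with equality at $N=1$. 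Your approach buys elementarity — only textbook facts, no Bernoulli polynomials or Euler--Maclaurin bookkeeping — at the cost of having to calibrate two separate estimates so that nothing of order $N\log N$ is lost (your observation that dropping the half-endpoint correction $\tfrac{N}{2}\log N$ would overshoot is exactly right); the paper's single Euler--Maclaurin application is more mechanical and makes the sharpness of the main term transparent, since sum and integral are compared in one step. Either proof is acceptable here, as the lemma is only used with substantial slack in Lemma~\ref{le:shiu_4}.
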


\noindent
\textit{Proof.} Let $P_N$ be the product of the left-hand side. Then
$$\log P_N = \sum_{n=1}^N \; \sum_{h=1}^n \log h = \sum_{h=1}^{N} \log h \sum_{n=h}^N 1 = \sum_{h=1}^{N} (N-h+1)\log h$$
and the Euler-Maclaurin summation formula yields
\begin{align*}
   \log P_N &= \int_1^N (N-x+1) \log x \, \textrm{d}x + \tfrac{1}{2} \log N + \tfrac{1}{12} \left( \tfrac{1}{N} - \log N - N \right) \\
   & \hspace*{1cm} + \tfrac{1}{2} \int_1^N \frac{N+x+1}{x^2} B_2 \left( \left\lbrace x \right\rbrace \right) \, \textrm{d}x.
\end{align*}
Now since $-\frac{1}{12} \leqslant B_2 \left( \left\lbrace x \right\rbrace \right) \leqslant \frac{1}{6}$, we get
\begin{align*} 
   \log P_N & \leqslant \int_1^N (N-x+1) \log x \, \textrm{d}x + \tfrac{1}{2} \log N + \tfrac{1}{12} \left( \tfrac{1}{N} - \log N - N \right) \\
   & \hspace*{1cm} + \tfrac{1}{12} \int_1^N \frac{N+x+1}{x^2} \, \textrm{d}x = \tfrac{1}{2} (N+1)^2 \log N - \tfrac{3}{4} \left( N^2 - 1 \right).\tag*{\qed}
\end{align*}

\begin{lemma}
\label{le:shiu_3}
Uniformly for all $\alpha \geqslant 3$ and $x > 1$,
$$\sum_{p \leqslant x} \frac{1}{(\log p)^\alpha} < (2 \alpha)^\alpha \, \frac{x}{(\log x)^{\alpha+1}}.$$
\end{lemma}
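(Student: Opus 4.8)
The plan is to split the sum at $\sqrt{x}$ and estimate the two ranges separately, the only external input being the Rosser--Schoenfeld upper bound $\pi(t) < c_0\, t/\log t$, valid for all $t>1$ with $c_0 = 1.25506$ as fixed in the notation.

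For the large primes $\sqrt{x} < p \leqslant x$ one has $\log p > \tfrac12\log x$, so each term is at most $2^\alpha(\log x)^{-\alpha}$; since there are fewer than $\pi(x) < c_0\,x/\log x$ of them,
\[
\sum_{\sqrt{x} < p \leqslant x}\frac{1}{(\log p)^\alpha} < \frac{c_0\,2^\alpha x}{(\log x)^{\alpha+1}} = \frac{c_0}{\alpha^\alpha}\cdot\frac{(2\alpha)^\alpha x}{(\log x)^{\alpha+1}}.
\]
For the small primes $p \leqslant \sqrt{x}$ I would use the trivial bound $1/(\log p)^\alpha \leqslant 1/(\log 2)^\alpha$ together with $\pi(\sqrt x) < c_0\sqrt x/\log\sqrt x = 2c_0\sqrt x/\log x$ (the range being empty when $x<4$), which gives $\sum_{p\leqslant\sqrt x}(\log p)^{-\alpha} < 2c_0\sqrt{x}\,(\log 2)^{-\alpha}(\log x)^{-1}$.

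The heart of the matter is to show that the sum of these two bounds is $<(2\alpha)^\alpha x/(\log x)^{\alpha+1}$. Rewriting the small-prime bound as $\tfrac{2c_0 x}{(\log x)^{\alpha+1}}\cdot\tfrac{(\log x)^\alpha}{(\log 2)^\alpha\sqrt x}$ and invoking the elementary one-variable maximisation $\max_{x>1}(\log x)^\alpha x^{-1/2} = (2\alpha/e)^\alpha$ (attained at $x = e^{2\alpha}$, seen from the log-derivative), this bound is at most $\tfrac{2c_0}{(e\log 2)^\alpha}\cdot\tfrac{(2\alpha)^\alpha x}{(\log x)^{\alpha+1}}$. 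Adding this to the large-prime estimate, it suffices to check
\[
c_0\left(\frac{1}{\alpha^\alpha} + \frac{2}{(e\log 2)^\alpha}\right) < 1 \qquad (\alpha \geqslant 3),
\]
and since $e\log 2 > 1$ both $\alpha^\alpha$ and $(e\log 2)^\alpha$ increase with $\alpha$, so the left-hand side is decreasing; its value at $\alpha=3$ is $c_0\bigl(1/27 + 2/(e\log 2)^3\bigr) \approx 0.42 < 1$, which closes the argument with a factor of more than $2$ to spare.

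The only point needing a little care is the regime $1 < x < 4$, where the small-prime sum is empty and all primes ($p=2$, together with $p=3$ if $x\geqslant 3$) fall into the "large" bucket; there $\log p > \tfrac12\log x$ still holds because $x < 4 \leqslant p^2$, so the large-prime estimate applies and the conclusion follows from $c_0\,2^\alpha < (2\alpha)^\alpha$. I do not expect any real obstacle: beyond the Rosser--Schoenfeld bound, everything is routine calculus and explicit constant-chasing, and the slack in the final numerical inequality is comfortable.
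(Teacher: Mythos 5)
Your proof is correct, and it is essentially the paper's argument: both split the sum at a power of $x$, bound each range via the Rosser--Schoenfeld estimate $\pi(t)<c_0\,t/\log t$, control the small-prime piece by maximising $(\log x)^{\alpha}x^{-c}$, and finish with a comfortable numerical check at $\alpha=3$. The only difference is that you fix the split at $\sqrt{x}$ where the paper keeps a parameter $b$ and optimises it, which the generous constant $(2\alpha)^{\alpha}$ makes unnecessary.
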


\begin{proof}
Let $b > 1$ a parameter at our disposal. Using \cite[(3.6)]{ros62}, we derive
\begin{align*}
   \sum_{p \leqslant x} \frac{1}{(\log p)^\alpha} &= \left( \sum_{p \leqslant x^{1/b}} + \sum_{x^{1/b} < p \leqslant x} \right) \frac{1}{(\log p)^\alpha} \\
   & \leqslant \frac{\pi \left( x^{1/b} \right) }{(\log 2)^\alpha} + \frac{b^\alpha \pi(x)}{(\log x)^\alpha} \\
    & \leqslant c_0 \left( \frac{b x^{1/b}}{(\log 2)^\alpha \log x} + \frac{ b^\alpha x}{(\log x)^{\alpha + 1}} \right)
\end{align*}
where $c_0:=\np{1.25506}$, and the inequality
$$x^{1/b} \leqslant \left( \frac{b \alpha}{e (b-1)}\right)^{\alpha} \frac{x}{(\log x)^{\alpha}}$$
valid for all $x > 1$, $b > 1$ and $\alpha > 0$, along with the obvious bound $b^\alpha \leqslant b^{\alpha+1}$ in the $2$nd term, yields
$$\sum_{p \leqslant x} \frac{1}{(\log p)^\alpha} < \frac{c_0 b^{\alpha+1} x}{(\log x)^{\alpha + 1}} \left( \left( \frac{\alpha}{e (b-1) \log 2}\right)^{\alpha} + 1 \right).$$
Now choose $b = 1 + \left( \frac{\alpha}{e \log 2}\right)^{\frac{\alpha}{\alpha+1}}$, so that
$$\sum_{p \leqslant x} \frac{1}{(\log p)^\alpha} < \frac{c_0 x}{(\log x)^{\alpha + 1}} \left(\left( \frac{\alpha}{e \log 2}\right)^{\frac{\alpha}{\alpha+1}} + 1 \right)^{\alpha+2}$$
and it is not difficult to see that $c_0 \left(\left( \frac{\alpha}{e \log 2}\right)^{\frac{\alpha}{\alpha+1}} + 1 \right)^{\alpha+2} < (2 \alpha)^\alpha$ when $\alpha \geqslant 3$.
\end{proof}

\begin{lemma}
\label{le:1/p_alpha}
Let $0 \leqslant \alpha < 1$. For all $x \geqslant 2$
$$\sum_{p \leqslant x} \frac{1}{p^\alpha} < \frac{c_0 (1 + 2\alpha)}{1-\alpha} \, \frac{x^{1-\alpha}}{\log x}$$
where $c_0:=\np{1.25506}$.
\end{lemma}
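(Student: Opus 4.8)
The plan is to reduce the sum over primes to the prime counting function by partial summation, and then to feed in the explicit Chebyshev-type bound $\pi(t)<c_0\,t/\log t$ (Rosser--Schoenfeld, \cite[(3.6)]{ros62}) already exploited in the proof of Lemma~\ref{le:shiu_3}. Writing $\mathbf{1}_{\mathbb P}$ for the indicator function of the primes, Abel's summation formula gives
$$\sum_{p\leqslant x}\frac{1}{p^\alpha}=\sum_{2\leqslant n\leqslant x}\frac{\mathbf{1}_{\mathbb P}(n)}{n^\alpha}=\frac{\pi(x)}{x^\alpha}+\alpha\int_2^x\frac{\pi(t)}{t^{1+\alpha}}\,\mathrm{d}t ,$$
and inserting $\pi(t)<c_0\,t/\log t$ (which contributes $0$ below $t=2$) yields
$$\sum_{p\leqslant x}\frac{1}{p^\alpha}<c_0\,\frac{x^{1-\alpha}}{\log x}+c_0\,\alpha\int_2^x\frac{t^{-\alpha}}{\log t}\,\mathrm{d}t .$$
So the whole question boils down to the integral estimate $\displaystyle\int_2^x\frac{t^{-\alpha}}{\log t}\,\mathrm{d}t\leqslant\frac{3}{1-\alpha}\cdot\frac{x^{1-\alpha}}{\log x}$ for $\alpha>0$: given this, $\sum_{p\leqslant x}p^{-\alpha}<c_0\bigl(1+\tfrac{3\alpha}{1-\alpha}\bigr)\tfrac{x^{1-\alpha}}{\log x}=\tfrac{c_0(1+2\alpha)}{1-\alpha}\cdot\tfrac{x^{1-\alpha}}{\log x}$, and the case $\alpha=0$ is simply $\pi(x)<c_0x/\log x$.

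To prove that integral bound I would split the range at $\sqrt{x}$. On $[2,\sqrt{x}]$ one uses $1/\log t\leqslant 1/\log 2$, and on $[\sqrt{x},x]$ one uses $1/\log t\leqslant 2/\log x$; in both pieces, enlarging $\int t^{-\alpha}\,\mathrm{d}t$ so that it starts from $0$ and invoking $\int_0^y t^{-\alpha}\,\mathrm{d}t=\frac{y^{1-\alpha}}{1-\alpha}$, one obtains
$$\int_2^x\frac{t^{-\alpha}}{\log t}\,\mathrm{d}t\leqslant\frac{1}{1-\alpha}\left(\frac{x^{(1-\alpha)/2}}{\log 2}+\frac{2\,x^{1-\alpha}}{\log x}\right).$$
The second term is the main term; the first is of strictly smaller order, and once one checks that it is dominated by $\frac{x^{1-\alpha}}{(1-\alpha)\log x}$ on the range at hand, the desired bound $\int_2^x t^{-\alpha}(\log t)^{-1}\,\mathrm{d}t<\frac{3}{1-\alpha}\cdot\frac{x^{1-\alpha}}{\log x}$ drops out, and with it the statement of the lemma.

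The only genuinely delicate point is this last absorption step: one must squeeze the lower-order $x^{(1-\alpha)/2}$ contribution in while still landing exactly on the sharp numerator $1+2\alpha$ rather than some larger constant. This is a matter of careful bookkeeping with the Rosser--Schoenfeld constant $c_0$ and the choice of splitting exponent (taking the split at $x^{1/2}$, or slightly below, or instead running the integration by parts $\int t^{-\alpha}(\log t)^{-1}=\frac{t^{1-\alpha}}{(1-\alpha)\log t}+\frac{1}{1-\alpha}\int t^{-\alpha}(\log t)^{-2}$ to gain an extra power of $\log t$), together with a direct check on the moderate range of $x$ for which the asymptotic inequalities are too crude. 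Beyond that I expect no structural obstacle.
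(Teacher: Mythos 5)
Up to the key integral estimate your argument coincides with the paper's proof: partial summation giving $\sum_{p\leqslant x}p^{-\alpha}=\pi(x)x^{-\alpha}+\alpha\int_2^x\pi(t)t^{-\alpha-1}\,\mathrm{d}t$, insertion of the Rosser--Schoenfeld bound $\pi(t)<c_0t/\log t$, and the reduction of everything to $\int_2^x t^{-\alpha}(\log t)^{-1}\,\mathrm{d}t\leqslant\frac{3}{1-\alpha}\cdot\frac{x^{1-\alpha}}{\log x}$ (the paper writes this inequality in one step, with no justification at all). The genuine gap is precisely in the step you call ``delicate''. Your absorption $\frac{x^{(1-\alpha)/2}}{\log 2}\leqslant\frac{x^{1-\alpha}}{\log x}$ is equivalent to $x^{1-\alpha}\geqslant\bigl(\frac{\log x}{\log 2}\bigr)^2$, which fails badly when $\alpha$ is close to $1$: for instance $\alpha=0.99$, $x=10^4$ gives $x^{1-\alpha}\approx 1.1$ against $(\log x/\log 2)^2\approx 176$. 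So the split at $\sqrt{x}$ does not close the argument uniformly in $0\leqslant\alpha<1$.

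Moreover, this is not a matter of bookkeeping: no choice of splitting point, and not the integration by parts you mention either, can rescue the target inequality, because it is false in the corner $1-\alpha\asymp 1/\log x$. Taking $\alpha=1-1/\log x$ and substituting $t=e^s$ gives $\int_2^x t^{-\alpha}(\log t)^{-1}\,\mathrm{d}t=\int_{\log 2}^{\log x}e^{s/\log x}s^{-1}\,\mathrm{d}s\geqslant\log\log x-\log\log 2$, while the claimed majorant equals $\frac{3x^{1-\alpha}}{(1-\alpha)\log x}=3e\approx 8.15$; the inequality therefore fails once $\log x>2400$ or so. The same choice even defeats the statement of Lemma~\ref{le:1/p_alpha} itself: its right-hand side is then $c_0(1+2\alpha)e<10.24$, whereas the left-hand side is at least $\sum_{p\leqslant x}1/p=\log\log x+B+o(1)$, which is larger as soon as $\log\log x\geqslant 10.24$. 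So a complete proof along your (and the paper's) lines needs an additional hypothesis restricting the regime $\alpha\to 1^-$, e.g. $x^{1-\alpha}\geqslant\bigl(\frac{\log x}{\log 2}\bigr)^2$, under which your $\sqrt{x}$-split does yield the factor $3$ and hence the numerator $1+2\alpha$; without such a restriction the ``absorption'' you postpone cannot be carried out, and you should flag that the uniform statement as written is not provable.
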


\begin{proof}
By partial summation and \cite[(3.6)]{ros62} as above, we get
\begin{align*}
   \sum_{p \leqslant x} \frac{1}{p^\alpha} &= \frac{\pi(x)}{x^\alpha} + \alpha \int_2^x \frac{\pi(t)}{t^{\alpha+1}} \, \textrm{d}t \leqslant c_0 \left( \frac{x^{1-\alpha}}{\log x} + \alpha \int_2^x \frac{\textrm{d}t}{t^\alpha \log t} \right) \\
   & \leqslant c_0 \left( \frac{x^{1-\alpha}}{\log x} + \frac{3 \alpha x^{1-\alpha}}{(1-\alpha) \log x} \right) = \frac{c_0 (1 + 2\alpha)}{1-\alpha} \, \frac{x^{1-\alpha}}{\log x}
\end{align*}
as required.
\end{proof}

\begin{lemma}
\label{le:tech}
Let $a < b \in \R$ and $g$ be a positive, derivable function on $\left[ a,b \right]$, such that $g^{\, \prime}$ is non-decreasing and satisfies
$$\exists \, \lambda_1 > 0, \ \forall x \in \left[ a,b \right], \ g^{\, \prime}(x) \geqslant \lambda_1.$$
Then
$$\int_a^b e^{-g(x)} \, \mathrm{d}x \leqslant \frac{e^{-g(a)}}{\lambda_1}.$$
\end{lemma}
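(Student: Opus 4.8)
The plan is to reduce the integral to an elementary exponential integral by minorizing $g$ with an affine function. First I would observe that, for every $x \in \left[ a,b \right]$, the fundamental theorem of calculus together with the pointwise lower bound $g^{\,\prime} \geqslant \lambda_1$ gives
$$g(x) - g(a) = \int_a^x g^{\,\prime}(t) \, \mathrm{d}t \geqslant \lambda_1 (x - a),$$
so that $e^{-g(x)} \leqslant e^{-g(a)} e^{-\lambda_1 (x-a)}$ holds throughout $\left[ a,b \right]$. Note that for this step only the pointwise inequality $g^{\,\prime} \geqslant \lambda_1$ is needed; the monotonicity of $g^{\,\prime}$ and the positivity of $g$ are not used here (they make the statement a natural companion to the Laplace-type estimates invoked later in the paper).

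Then I would integrate this pointwise bound over $\left[ a,b \right]$:
$$\int_a^b e^{-g(x)} \, \mathrm{d}x \leqslant e^{-g(a)} \int_a^b e^{-\lambda_1 (x-a)} \, \mathrm{d}x = e^{-g(a)} \cdot \frac{1 - e^{-\lambda_1 (b-a)}}{\lambda_1} \leqslant \frac{e^{-g(a)}}{\lambda_1},$$
the last inequality because $\lambda_1 (b-a) > 0$ forces $1 - e^{-\lambda_1(b-a)} < 1$. This yields the claim.

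An alternative route, which uses the monotonicity of $g^{\,\prime}$ more essentially, is to write $e^{-g(x)} = \frac{1}{g^{\,\prime}(x)} \bigl( -\frac{\mathrm{d}}{\mathrm{d}x} e^{-g(x)} \bigr)$ and apply Bonnet's second mean value theorem to the non-negative, non-increasing factor $1/g^{\,\prime}$: there is $\xi \in \left[ a,b \right]$ with $\int_a^b e^{-g(x)} \, \mathrm{d}x = \frac{1}{g^{\,\prime}(a)} \bigl( e^{-g(a)} - e^{-g(\xi)} \bigr) \leqslant e^{-g(a)}/g^{\,\prime}(a) \leqslant e^{-g(a)}/\lambda_1$. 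Either way there is essentially no obstacle; the only point requiring a word of care is that the bound $g^{\,\prime} \geqslant \lambda_1$ is assumed on all of $\left[ a,b \right]$, so the affine minorant (resp. the monotone factor) is valid throughout, and no integrability issue arises since $g$, being derivable, is continuous on the compact interval.
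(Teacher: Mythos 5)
Your proof is correct. Note that for this lemma the paper gives no internal argument at all: it simply cites Lemma~4.6 of Bordell\`es--T\'oth \cite{bortoth}, so your self-contained derivation is a genuine addition rather than a rerun of the paper's text. Your main route --- the affine minorant $g(x)\geqslant g(a)+\lambda_1(x-a)$ followed by integrating $e^{-g(a)}e^{-\lambda_1(x-a)}$ --- is the standard one-line argument, and as you correctly observe it uses only the pointwise bound $g^{\,\prime}\geqslant\lambda_1$, not the monotonicity of $g^{\,\prime}$ nor the positivity of $g$; the hypotheses of the lemma are simply stronger than what is needed. Two microscopic remarks: the fundamental theorem of calculus step implicitly requires $g^{\,\prime}$ to be (Riemann) integrable, which here is guaranteed by its monotonicity, but you could bypass the issue entirely by invoking the mean value theorem, $g(x)-g(a)=g^{\,\prime}(c)(x-a)\geqslant\lambda_1(x-a)$, which needs only differentiability; and an equally quick variant is to write $e^{-g(x)}\leqslant\lambda_1^{-1}g^{\,\prime}(x)e^{-g(x)}$ and integrate, giving $\lambda_1^{-1}\left(e^{-g(a)}-e^{-g(b)}\right)\leqslant\lambda_1^{-1}e^{-g(a)}$. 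Your alternative via Bonnet's second mean value theorem is also valid, though heavier than necessary.
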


\begin{proof}
This is \cite[lemma~4.6]{bortoth}.
\end{proof}

\begin{lemma}
\label{le:shiu_10}
Let $k \in \Z_{\geqslant 1}$, $\ell \in \R_{\geqslant 1}$ and $A \in \R_{\geqslant 2}$. Then
$$\sum_{r=2}^A r k^{5 \ell r} < A \max \left( A, 2k^{5 \ell A}\right).$$
\end{lemma}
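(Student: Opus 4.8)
The plan is to split the sum $S := \sum_{r=2}^{A} r\,k^{5\ell r}$ according to which of the two candidate bounds dominates, and in each regime compare $S$ with a geometric-type series that telescopes to the claimed maximum. First I would dispose of the trivial case $k=1$, where $k^{5\ell r}=1$ for every $r$ and $S = \sum_{r=2}^{A} r \le \frac{1}{2}(\lfloor A\rfloor+A)(\lfloor A\rfloor-1) < A^2 \le A\max(A, 2\cdot 1)$, using $A \ge 2$. So from now on assume $k \ge 2$, hence $k^{5\ell} \ge 2^5 = 32$; write $t := k^{5\ell} \ge 32$ for brevity (though in the final write-up I will keep $k^{5\ell}$ explicit so as not to introduce an undefined macro).

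For $k \ge 2$ the terms $r\,t^{r}$ grow faster than geometrically, so the sum is dominated by its last term. Concretely I would bound
$$
S = \sum_{r=2}^{A} r\,t^{r} \le A\,t^{A}\sum_{r=2}^{A} t^{\,r-A}
\le A\,t^{A}\sum_{j=0}^{\infty} t^{-j}
= A\,t^{A}\,\frac{1}{1-t^{-1}}
= A\,t^{A}\,\frac{t}{t-1}.
$$
Since $t \ge 32$ we have $\frac{t}{t-1} \le \frac{32}{31} < 2$, whence $S < 2A\,t^{A} = 2A\,k^{5\ell A} \le A\max\!\bigl(A, 2k^{5\ell A}\bigr)$, which is exactly the asserted inequality (and the inequality is strict, as required). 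Note that here I only used $r \le A$ to pull $A$ out front and the geometric decay of $t^{r-A}$; there is no need to separate the two cases in the maximum, because when $k\ge 2$ the second argument $2k^{5\ell A}$ already majorizes $S$ on its own, and when $k=1$ the first argument does.

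I do not anticipate a real obstacle here — the only mild subtlety is the bookkeeping when $A$ is not an integer, since the summation index $r$ ranges over integers $2 \le r \le \lfloor A\rfloor$. This is harmless: every step above uses $r \le A$ and the number of terms is at most $A$, and replacing the upper limit $\lfloor A\rfloor$ by $A$ only weakens the bound. One should also check the degenerate situation $2 \le A < 3$ (or even the empty sum when $\lfloor A\rfloor < 2$, i.e. $A<2$, which is excluded by hypothesis), but in that range $S = 2k^{5\ell\cdot 2}$ or $S=0$ and the bound is immediate. If one prefers a cleaner constant, one can instead note $\frac{t}{t-1}\le \frac{k^{5\ell}}{k^{5\ell}-1}$ and, for $k\ge 2,\ \ell\ge 1$, this is $<2$; the strict inequality in the statement then follows since the geometric series bound is itself strict for $A<\infty$.
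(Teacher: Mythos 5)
Your proof is correct and takes essentially the same route as the paper: dispose of $k=1$ with the trivial bound $A^2$, and for $k\geqslant 2$ show the sum is dominated by its last term times a geometric factor less than $2$, giving $2Ak^{5\ell A}$. The only difference is cosmetic --- the paper evaluates the arithmetico-geometric sum in closed form and bounds $\left(\tfrac{k^{5\ell}}{k^{5\ell}-1}\right)^{2}<2$, whereas you use the cruder estimate $r\leqslant A$ plus the geometric series $\tfrac{k^{5\ell}}{k^{5\ell}-1}<2$, which suffices.
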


\noindent
\textit{Proof.}
The sum is $< A^{2}$ if $k=1$, and otherwise
\begin{align*}
   \sum_{r=2}^A r k^{5 \ell r} &= \frac{k^{5 \ell(A+1)} (A k^{5 \ell} - A - 1) - k^{10 \ell} (k^{5 \ell} - 2)}{(k^{5 \ell} - 1)^2} \\
   & \leqslant A k^{5 \ell A} \left( \frac{k^{5 \ell}}{k^{5 \ell}-1}\right)^{2} < 2 A k^{5 \ell A}. \tag*{\qed}
\end{align*}

\begin{lemma}
\label{le:shiu_2}
If $2 \leqslant y \leqslant x$ and $2 \leqslant z \leqslant \sqrt{y}$,
$$\sum_{\substack{x-y < n \leqslant x \\ P^-(n) > z}} 1 \leqslant \frac{2y}{\log z}.$$
\end{lemma}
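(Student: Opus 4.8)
The plan is to treat
$$\mathcal{A} := \bigl\{\, n \in \Z :\ x - y < n \leqslant x,\ P^-(n) > z \,\bigr\}$$
as a sifted set and to hit it with the large sieve inequality in its sharp arithmetic form. Two features make this work cleanly: $\mathcal{A}$ lies inside the half-open interval $(x-y,x]$, which has length exactly $y$; and for each prime $p\leqslant z$ no element of $\mathcal{A}$ is divisible by $p$, i.e.\ $\mathcal{A}$ omits the residue class $0 \bmod p$ for every $p\leqslant z$. The hypothesis $z\leqslant\sqrt y$ is exactly what will turn the numerator supplied by the large sieve into $2y$.

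Concretely, I would invoke the large sieve in the Montgomery--Vaughan form: a set of integers contained in an interval of length $N$ that omits (at least) one residue class modulo each prime $p$ in a set $\mathcal{P}$ has cardinality at most
$$\frac{N + Q^2}{\displaystyle\sum_{\substack{q\leqslant Q\\ p\mid q\,\Rightarrow\,p\in\mathcal{P}}}\ \prod_{p\mid q}\frac{1}{p-1}}$$
for any $Q\geqslant 1$. I would take $\mathcal{P}=\{p\leqslant z\}$, $N=y$ and, crucially, $Q=z$; then the $q$-sum runs over all squarefree $q\leqslant z$ and equals $\sum_{q\leqslant z}\mu^2(q)/\varphi(q)$, so that $|\mathcal{A}|\leqslant (y+z^2)/L$ where $L:=\sum_{q\leqslant z}\mu^2(q)/\varphi(q)$.

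The two remaining ingredients are routine. First, $L>\log z$ for every real $z\geqslant 1$: since $\mu^2(q)/\varphi(q)=\frac{\mu^2(q)}{q}\prod_{p\mid q}(1-1/p)^{-1}=\sum_{p\mid j\,\Rightarrow\,p\mid q}\mu^2(q)/(qj)$, and since $qj=n$ whenever $q=\prod_{p\mid n}p$ (squarefree, $\leqslant n\leqslant z$) and $j=n/q$, every $n\leqslant z$ contributes the term $1/n$ to the resulting double sum, which is therefore $\geqslant\sum_{n\leqslant z}1/n\geqslant\log(\lfloor z\rfloor+1)>\log z$. Second, $z\leqslant\sqrt y$ gives $z^2\leqslant y$, whence $y+z^2\leqslant 2y$ and $|\mathcal{A}|< 2y/\log z$, as required.

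I do not expect a genuine difficulty here. The only points needing care are to use the large sieve with the optimal constant (so the numerator is $N+Q^2=y+z^2$, not a larger multiple of it), and to remember that one may take $N=y$ because $\mathcal{A}$ lies in an interval of length exactly $y$, even though it may contain up to $\lfloor y\rfloor+1$ integers. It is worth noting that a Selberg- or Brun-type sieve would be the wrong tool here: its remainder term is not negligible once $z$ is close to $\sqrt y$, whereas the large sieve carries no remainder term and delivers the constant $2$ exactly.
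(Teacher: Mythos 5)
Your proof is correct and follows essentially the same route as the paper: both apply the arithmetic large sieve (the paper via \cite[Theorem~9.8]{friwa10}, you via the Montgomery--Vaughan sharp form with numerator $N-1+Q^2$, which is what justifies replacing the integer count by the interval length $y$) with $Q=z$, bound the denominator below by $\sum_{q\leqslant z}\mu^2(q)/\varphi(q)\geqslant\log z$, and use $z^2\leqslant y$ to get the constant $2$.
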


\begin{proof}
This is a straightforward application of the large sieve. More precisely, we use \cite[Theorem~9.8]{friwa10}, along with the usual notation of the large sieve, applied to the sequence $a_n = 1$ if $P^- (n) > z$ and $0$ otherwise. Note that, for all primes $p \leqslant z$, we have $\omega(p) \geqslant 1$ since $0 \in \Omega_p$ in this case. Let $Q \in \left[ 1,z \right]$ be any real parameter. Then \cite[Theorem~9.8]{friwa10}, with $M = \lfloor x - y \rfloor$ and $N = \lfloor x \rfloor - \lfloor x-y \rfloor$, yields
$$\sum_{\substack{x-y < n \leqslant x \\ P^- (n) > z}} 1 \leqslant \frac{N-1+Q^2}{\mathcal{L}} \leqslant \frac{y+Q^2}{\mathcal{L}}$$
where $\displaystyle \mathcal{L} := \sum_{k \leqslant Q} \mu(k)^2 \prod_{p \mid k} \frac{\omega(p)}{p-\omega(p)}$. Since $Q \leqslant z$, each prime divisor $p$ of $k \leqslant Q \leqslant z$ satisfies $\omega(p) \geqslant 1$, so that
$$\displaystyle \mathcal{L} \geqslant \sum_{k \leqslant Q} \mu(k)^2 \prod_{p \mid k} \frac{1}{p-1} = \sum_{k \leqslant Q} \frac{\mu^2(k)}{\varphi(k)} \geqslant \log Q$$
and hence
$$\sum_{\substack{x-y < n \leqslant x \\ P^- (n) > z}} 1 \leqslant \frac{y+Q^2}{\log Q}$$
for all $1 < Q \leqslant z$. Choosing $Q=z$, so that $Q^2 \leqslant y$, yields the desired bound.
\end{proof}

\subsubsection{Key tools}

\begin{lemma}
\label{le:shiu_4}
Assume
\begin{equation}
   x \geqslant \exp \left( 7^{28} (12e\ell)^{28\log ( 192 e \ell )} \right) \label{eq:condition_shui_4}
\end{equation}
and $z \geqslant x^{\frac{1}{3 \ell}}$. Then
$$\sum_{\substack{n \leqslant z \\ P^+(n) < \log x \log_2 x}} 1 < z^{1/4}.$$
\end{lemma}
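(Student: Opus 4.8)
The quantity to bound is the number of $z$-smooth integers with $P^+(n)<\log x\,\log_2 x$, i.e. $\Psi(z,u)$ with $u:=\log x\,\log_2 x$. The standard tool here is Rankin's trick: for any $\sigma\in(0,1)$,
$$
\sum_{\substack{n\leqslant z\\ P^+(n)<u}}1\;\leqslant\; z^{\sigma}\sum_{\substack{n\geqslant 1\\ P^+(n)<u}}\frac{1}{n^{\sigma}}\;=\;z^{\sigma}\prod_{p<u}\Bigl(1-\frac{1}{p^{\sigma}}\Bigr)^{-1}.
$$
I would take $\sigma$ close to $1$, say $\sigma=1-\frac{1}{\log u}$ (the usual choice), so that $p^{\sigma}=p\cdot p^{-1/\log u}\geqslant p/e$ for $p<u$, giving $\bigl(1-p^{-\sigma}\bigr)^{-1}\leqslant\bigl(1-e/p\bigr)^{-1}$ for $p$ large and a crude constant factor for the finitely many small primes. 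Then
$$
\sum_{p<u}\log\Bigl(1-\frac{1}{p^{\sigma}}\Bigr)^{-1}\;\leqslant\;\sum_{p<u}\frac{1}{p^{\sigma}}+O(1)\;\leqslant\;e\sum_{p<u}\frac{1}{p}+O(1)\;\ll\;\log_2 u,
$$
by Mertens. Hence the product is $\exp(O(\log_2 u))=(\log_2 u)^{O(1)}=(\log_3 x+\log_2 x/\text{stuff})^{O(1)}$, which is bounded by a fixed power of $\log_2 x$.

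**Putting in the size of $z$.** The point is that $z\geqslant x^{1/(3\ell)}$ forces $z^{\sigma}=z\cdot z^{-1/\log u}$; since $\log z\geqslant \frac{1}{3\ell}\log x$ and $\log u=\log_2 x+\log_3 x\sim\log_2 x$, the loss factor $z^{-1/\log u}=\exp(-\log z/\log u)$ is $\exp\bigl(-\tfrac{\log x}{3\ell\log_2 x}(1+o(1))\bigr)$, which is extremely small. Concretely I would show
$$
\sum_{\substack{n\leqslant z\\ P^+(n)<u}}1\;\leqslant\; z\exp\Bigl(-\frac{\log z}{\log u}+C\log_2 u\Bigr)
$$
for an explicit absolute constant $C$, and then the claimed bound $<z^{1/4}$ is equivalent to
$$
\frac{\log z}{\log u}\;>\;\frac{3}{4}\log z+C\log_2 u,
$$
i.e. roughly $\log z\bigl(\tfrac{1}{\log u}-\tfrac34\bigr)>C\log_2 u$. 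This clearly fails as stated — the sign is wrong — so the actual argument must not be this naive; one must instead use a smaller $\sigma$ (not $1-1/\log u$ but something like $\sigma=\frac34+\varepsilon$ tuned to $z$), exploiting that $u$ is only $\log x\log_2 x$ while $\log z$ is a positive proportion of $\log x$. With $\sigma=\frac14$, say, one gets $z^{\sigma}=z^{1/4}$ up front and must check $\prod_{p<u}(1-p^{-1/4})^{-1}\leqslant 1$, which is false; so the right choice is $\sigma$ slightly larger than $\frac14$, with the Euler product controlled by Lemma~\ref{le:1/p_alpha}, which gives $\sum_{p<u}p^{-\sigma}\ll \frac{u^{1-\sigma}}{\log u}$, and then one needs $z^{\sigma}\exp\bigl(O(u^{1-\sigma}/\log u)\bigr)<z^{1/4}$, i.e.
$$
\Bigl(\sigma-\tfrac14\Bigr)\log z\;>\;O\!\left(\frac{u^{1-\sigma}}{\log u}\right).
$$

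**Balancing.** Choose $\sigma=\tfrac14+\delta$. The right side is $\ll u^{3/4-\delta}/\log u = (\log x\,\log_2 x)^{3/4-\delta}/\log u$, which for fixed $\delta>0$ is a power of $\log x$ below $1$, hence $o(\log z)$; so any fixed $\delta\in(0,\tfrac14)$ works once $x$ is large. The explicit threshold on $x$ in \eqref{eq:condition_shui_4} is precisely what is needed to absorb the implied constants (from Lemma~\ref{le:1/p_alpha} and from handling the small primes $p\leqslant$ some bound where $p^{-\sigma}$ is not close to its Mertens average) and the factor $1/(3\ell)$ coming from $\log z\geqslant\frac{\log x}{3\ell}$. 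Concretely, I would fix $\delta=\tfrac14$ is too big, so take $\delta$ small, e.g. aim for $\sigma$ such that $u^{1-\sigma}=\sqrt{\log x}$ or so; then $(\sigma-\tfrac14)\log z\geqslant c\,\frac{\log x}{\ell}$ must beat $O(\sqrt{\log x})$, true for $\log x\gg \ell^{2}$, comfortably inside the stated range.

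**Main obstacle.** The delicate point is \emph{bookkeeping the constants}: the Euler product $\prod_{p<u}(1-p^{-\sigma})^{-1}$ must be bounded explicitly, which means splitting primes at some cutoff, using Lemma~\ref{le:1/p_alpha} for the tail and a trivial bound (or Mertens with an explicit error, e.g. from Rosser--Schoenfeld) for the head, and then tracking how everything depends on $\ell$ through $u=\log x\log_2 x$ and through $\log z\geqslant\frac{\log x}{3\ell}$. Getting the final inequality to close for all $x$ above the enormous but explicit bound $\exp(7^{28}(12e\ell)^{28\log(192e\ell)})$ — rather than just "$x$ sufficiently large" — is the whole content; the smoothness estimate itself is standard Rankin. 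I expect the chosen value of $\sigma$ to be something like $\sigma=1-\tfrac{1}{2\log u}$ after all, with the key saving being $z^{-1/(2\log u)}=\exp\bigl(-\tfrac{\log z}{2\log u}\bigr)\leqslant\exp\bigl(-\tfrac{\log x}{6\ell\log_2 x}\bigr)$, and the product bounded by $\exp\bigl(\tfrac{e}{2}\log_2 u+O(1)\bigr)\ll(\log_2 x)^{2}$; then $z^{1-1/(2\log u)}(\log_2 x)^2<z^{1/4}$ reduces to $\tfrac34\log z>\tfrac{\log z}{2\log u}+2\log_3 x+O(1)$, which holds once $\log z$ (hence $\log x/\ell$) exceeds an explicit multiple of $\log_3 x$ — and that is exactly guaranteed by \eqref{eq:condition_shui_4}.
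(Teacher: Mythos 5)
Your Rankin set-up is the right starting point, but the key inequality is run in the wrong direction, and this is fatal as written. The Euler product $\prod_{p<u}\bigl(1-p^{-\sigma}\bigr)^{-1}$ always exceeds $1$, so a bound of the form $z^{\sigma}\prod_{p<u}\bigl(1-p^{-\sigma}\bigr)^{-1}<z^{1/4}$ is only possible with $\sigma<\tfrac14$, and the condition to verify is $\bigl(\tfrac14-\sigma\bigr)\log z>\log\prod_{p<u}\bigl(1-p^{-\sigma}\bigr)^{-1}$. You instead take $\sigma=\tfrac14+\delta$ with $\delta>0$ and require $\bigl(\sigma-\tfrac14\bigr)\log z>O\bigl(u^{1-\sigma}/\log u\bigr)$; but with $\sigma>\tfrac14$ Rankin's bound is already $\geqslant z^{\sigma}>z^{1/4}$, so no amount of balancing can give the lemma. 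The same inversion recurs in your final paragraph: $z^{1-1/(2\log u)}(\log_2 x)^2<z^{1/4}$ is equivalent to $\tfrac{\log z}{2\log u}>\tfrac34\log z+2\log_3 x$, not to the inequality you state, and it is false because $\log u\geqslant\log_2 x$ is large; so the choice $\sigma=1-\tfrac{1}{2\log u}$, which you had correctly rejected earlier, cannot be resurrected. In short, none of the three exponents you propose closes the argument.

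The repair is the mirror image of what you wrote and does work: fix $\sigma$ strictly below $\tfrac14$, say $\sigma=\tfrac18$. Since $1-p^{-1/8}\geqslant 1-2^{-1/8}$, one has $\log\bigl(1-p^{-1/8}\bigr)^{-1}\leqslant c\,p^{-1/8}$ with an explicit absolute $c$, and Lemma~\ref{le:1/p_alpha} gives $\sum_{p\leqslant u}p^{-1/8}\leqslant c'\,u^{7/8}/\log u$ with $u=\log x\log_2 x$; the required inequality $\tfrac18\log z\geqslant\tfrac{\log x}{24\ell}>cc'(\log x)^{7/8}(\log_2 x)^{-1/8}$ then follows comfortably from \eqref{eq:condition_shui_4}. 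Note, however, that the paper does something sharper than this: it keeps the Rankin parameter of size roughly a power of $1/\log t$, bounds $1/(p^{a}-1)$ via an AM--GM trick together with Lemma~\ref{le:shiu_3}, and optimizes an integer parameter $N\approx\log(3\ell\log_2 x)$ to reach the intermediate estimate \eqref{eq:shiu_4}, namely $\exp\bigl(e\log z\,(\log(3\ell\log_2 x))^{2}/\log_2 x\bigr)$, which is converted into $z^{1/4}$ by Lemma~\ref{le:ineg_log}; this explicit strengthening of Shiu's Lemma~1 is one of the paper's advertised refinements, whereas the fixed-exponent Rankin argument above only recovers the lemma as stated.
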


\begin{proof}
For all $2 \leqslant t \leqslant z$, set as usual
\begin{equation}
   \Psi (z, t) := \sum_{\substack{n \leqslant z \\ P^+(n) \leqslant t}} 1 \label{eq:Psi}
\end{equation}
and let $a > 0$ be a parameter to optimize. By Rankin's method, we derive
$$\Psi(z,t) \leqslant z^a \prod_{p \leqslant t} \left( 1 + \frac{1}{p^a-1}\right) \leqslant \exp \left( a \log z + \sum_{p \leqslant t} \frac{1}{p^a-1} \right).$$
The \textsc{am-gm} inequality implies that, for all $N \in \Z_{\geqslant 1}$
$$p^a-1 \geqslant \sum_{n=1}^N \frac{(a \log p)^n}{n!} \geqslant N \left( \prod_{n=1}^N \frac{1}{n!} \right)^{1/N} \left( a \log p \right)^{(N+1)/2}$$
so that
$$\Psi(z,t) \leqslant \exp \left( a \log z + \frac{K_N}{a^{(N+1)/2}} \sum_{p \leqslant t} \frac{1}{(\log p)^{(N+1)/2}} \right)$$
with $K_N := N^{-1} \left( \prod_{n=1}^N n! \right)^{1/N}$. Lemma~\ref{le:shiu_3} implies that, if $N \geqslant 5$
$$\Psi(z,t) \leqslant \exp \left( a \log z + \frac{L_N t}{a^{(N+1)/2} (\log t)^{(N+3)/2}} \right)$$
where $L_N := (N+1)^{\frac{N+1}{2}} \times K_N$. The choice $a = \frac{1}{\log t} \left( \frac{(N+1)L_N t}{2\log z} \right)^{\frac{2}{N+3}}$ then gives
$$\Psi (z, t) \leqslant \exp \left( \frac{R_N t^{\frac{2}{N+3}} (\log z)^{\frac{N+1}{N+3}}}{\log t} \right)$$
where $R_N := (N+3)(N+1)^{-\frac{N+1}{N+3}} \left( \frac{1}{2} L_N \right)^{\frac{2}{N+3}}$. We have
$$R_N = (N+3) (2N)^{-\frac{2}{N+3}} \times \left( \prod_{n=1}^N n! \right)^{\frac{2}{N(N+3)}}   < N \times \left( \prod_{n=1}^N n! \right)^{\frac{2}{N(N+3)}}$$
since $N \geqslant 5$, and by Lemma~\ref{le:factorial}, we also have
$$\left( \prod_{n=1}^N n! \right)^{\frac{2}{N(N+3)}} \leqslant N^{1 - \frac{N-1}{N(N+3)}} e^{- \frac{3(N^2-1)}{2N(N+3)}} < e^{-1} N$$
if $N \geqslant 5$. Hence, $R_N < e^{-1} N^2$ if $N \geqslant 5$. Substituting $t = \log x \log _2 x$ implies that, for all $N \in \Z_{\geqslant 5}$,
$$\sum_{\substack{n \leqslant z \\ P^+(n) < \log x \log_2 x}} 1 < \exp \left( \frac{N^2}{e} \frac{(\log x)^{\frac{2}{N+3}} (\log z)^{\frac{N+1}{N+3}}}{(\log_2 x)^{\frac{N+1}{N+3}}}\right)$$
and since $x \leqslant z^{3 \ell}$, we derive
$$\sum_{\substack{n \leqslant z \\ P^+(n) < \log x \log_2 x}} 1 < \exp \left( \frac{N^2 (3 \ell)^{\frac{2}{N+3}}}{e}  \frac{\log z}{(\log_2 x)^{\frac{N+1}{N+3}}}\right).$$
The function $u \mapsto u^2 (3 \ell)^{\frac{2}{u+3}} (\log_2 x)^{-\frac{u+1}{u+3}}$ is increasing on the interval
$$\Bigl[ \log( 3 \ell \log_2 x) - 3 \, , \, + \infty \Bigr)$$
if $x \geqslant \exp \left( e^{\frac{1}{3 \ell} e^{12}} \right)$. Choosing $N = \left \lfloor \log( 3 \ell \log_2 x) \right \rfloor$ then yields
\begin{equation}
   \sum_{\substack{n \leqslant z \\ P^+(n) < \log x \log_2 x}} 1 < \exp \left( \frac{e\log z \left( \log( 3 \ell \log_2 x) \right)^2}{\log_2 x} \right). \label{eq:shiu_4}
\end{equation}
Finally, using Lemma~\ref{le:ineg_log} with $x$ replaced by $\log x$ and $a = 4e$ and $b=3\ell$, we see that the hypothesis \eqref{eq:condition_shui_4} implies
$$e \left( \log( 3 \ell \log_2 x) \right)^2 \leqslant \tfrac{1}{4} \log_2 x$$
as required. We conclude the proof by noticing that the condition \eqref{eq:condition_shui_4} also implies the following three claims:
\begin{enumerate}
   \item[\scriptsize $\triangleright$] Clearly, $x \geqslant \exp \left( e^{\frac{1}{3 \ell} e^{12}} \right)$ .
   \item[\scriptsize $\triangleright$] Next, we have $\log( 3 \ell \log_2 x) > 5$, so that $N \geqslant 5$.
   \item[\scriptsize $\triangleright$] Finally, note that the inequality $x^{\frac{1}{3 \ell}} \geqslant \log x \log_2 x$ is ensured as soon as $\log x \geqslant 5 \ell \log_2 x$, the latter condition being satisfied for $x \geqslant (10 \ell \log 5 \ell)^{5 \ell}$, a constraint easily fullfilled by \eqref{eq:condition_shui_4}. Hence $z \geqslant x^{\frac{1}{3 \ell}} \geqslant t$.
\end{enumerate}
The proof is complete.
\end{proof}

\begin{lemma}
\label{le:shiu_9}
Let $z \geqslant e^{e^2}$ and $k,r \in \Z_{\geqslant 1}$. Assume $r \geqslant A_k$, $\log z \geqslant r \log 2r$ and that $f$ satisfies \eqref{eq:hyp_5}. Then
$$\sum_{\substack{z^{1/2} < n \leqslant z \\ P^+(n) \leqslant z^{1/r}}} \frac{f(n)}{n} <  e^{-\frac{1}{4} r \log 2r} \sum_{n \leqslant z} \frac{f(n)}{n}.$$
\end{lemma}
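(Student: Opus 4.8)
The plan is to use Rankin's method again, now on a weighted sum, together with the key combinatorial input \eqref{eq:hyp_5}, which lets us trade a factor $f$ for a factor $\tau_k$. The first step is to write $n = n_1 n_2$ where $n_1$ collects the prime powers dividing $n$ that lie below $z^{1/r}$ and $n_2 = 1$; in fact every $n$ counted on the left satisfies $P^+(n) \leqslant z^{1/r}$, so $n$ is entirely $z^{1/r}$-smooth, and moreover $n > z^{1/2}$. For a real parameter $a \in (0,1)$ at our disposal I would insert the Rankin factor $(n/z^{1/2})^{a} \geqslant 1$ for the range $z^{1/2} < n \leqslant z$, and then drop the constraint $n \leqslant z$, extending the sum over all $z^{1/r}$-smooth $n$. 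This gives
\[
   \sum_{\substack{z^{1/2} < n \leqslant z \\ P^+(n) \leqslant z^{1/r}}} \frac{f(n)}{n}
   \;\leqslant\; z^{-a/2} \sum_{\substack{n \geqslant 1 \\ P^+(n) \leqslant z^{1/r}}} \frac{f(n)}{n^{1-a}}.
\]
The point is that $z^{-a/2}$ will produce the saving, and the remaining smooth sum over $n^{a-1}$ must be controlled by $\sum_{n \leqslant z} f(n)/n$ up to a tame factor.

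The second step handles the smooth Dirichlet-type sum. Here I cannot factor $f$ multiplicatively, so instead I would write each smooth $n$ as $n = m\,\ell$ with a bounded "core'' idea: cover the $z^{1/r}$-smooth numbers by the set $\{ m : m \leqslant z, \ m \text{ has all prime factors} \leqslant z^{1/r}\}$ times a smooth tail, but more cleanly, I would split according to the largest divisor of $n$ not exceeding $z$. Concretely, every $z^{1/r}$-smooth $n$ can be written $n = ab$ where $a \leqslant z < a\,p$ for the relevant prime power, hence $a > z / z^{1/r} = z^{1-1/r}$ and $a \leqslant z$; writing $n = ab$ with $a$ in the dyadic-type window $(z^{1-1/r}, z]$ and $b$ again $z^{1/r}$-smooth, and applying \eqref{eq:hyp_5} in the form $f(n) = f(ab) \leqslant \tau_k(b) f(a)$ (or $f(ab)\leqslant \tau_k(a) f(b)$, whichever is convenient), we bound $\sum f(n)/n^{1-a}$ by $\big(\sum_{a \leqslant z} f(a)/a\big)$ times a factor $\sum_{b} \tau_k(b)/b^{1-a}$ over smooth $b$, times the lost powers $z^{(1-1/r)\,(a)}$-type terms. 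The $\tau_k$-sum over $z^{1/r}$-smooth integers is a standard Rankin/Euler-product estimate: it is at most $\prod_{p \leqslant z^{1/r}}(1 - p^{a-1})^{-k}$, and with $a$ chosen of size about $1/\log z$ (so that $p^{a} \asymp 1$ uniformly for $p \leqslant z$) this is $\exp\!\big(O(k \sum_{p \leqslant z^{1/r}} p^{-1})\big) = \exp\!\big(O(k \log r)\big)$ by Mertens, using $\log z^{1/r} = (\log z)/r$.

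The third step is the optimisation of $a$. With the losses assembled, the bound takes the shape $z^{-a/2}\, e^{c k r^{?}} \cdot (\text{polynomial in } r) \cdot \sum_{n\leqslant z} f(n)/n$ for explicit constants; choosing $a = \alpha (\log 2r)/\log z$ with a suitable numerical $\alpha$ turns $z^{-a/2}$ into $e^{-(\alpha/2) r \log 2r \cdot (\text{something})}$... more carefully, since we need the clean exponent $-\tfrac14 r \log 2r$, I would take $a$ so that $z^{a/2} = e^{\beta r \log 2r}$ for $\beta$ slightly larger than $1/4$, i.e. $a = 2\beta r \log 2r/\log z$, which is admissible ($a < 1$) precisely because of the hypothesis $\log z \geqslant r \log 2r$ (giving $a \leqslant 2\beta < 1$ for $\beta$ close to $1/4$), and then check that the leftover factor $e^{\beta r \log 2r - \frac14 r\log 2r} \cdot \exp(O(k\log r)) \cdot \mathrm{poly}(r)$ is $\leqslant 1$. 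This is where the hypotheses $r \geqslant A_k$ and $z \geqslant e^{e^2}$ enter: $r \geqslant A_k$ is exactly the threshold (via Lemma~\ref{le:ineg_r} and Lemma~\ref{le:shiu_10}-type bounds) beyond which the polynomial and $\exp(O(k\log r))$ factors are absorbed by a small fraction of $r\log 2r$, and $z \geqslant e^{e^2}$ guarantees $\log z \geqslant e^2$ so the Rankin exponent $a$ is genuinely in $(0,1)$ and Mertens-type estimates have the stated shape.

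The main obstacle I expect is the second step: replacing the multiplicativity of $f$, which in Shiu's original argument makes the smooth Dirichlet sum factor as an Euler product, by a purely combinatorial decomposition via \eqref{eq:hyp_5} without losing more than $e^{O(k\log r)}$ and a power of $r$. One has to choose the factorisation $n = ab$ so that $a$ ranges over a controlled window, the residual variable $b$ stays $z^{1/r}$-smooth so that $\sum \tau_k(b)/b^{1-a}$ is genuinely $e^{O(k\log r)}$ rather than a power of $z$, and the "boundary'' powers of $z$ created by shifting from $n^{1-a}$ to $a^{1}$ are all reabsorbed into the $z^{-a/2}$ saving. Getting every one of these pieces explicit, with the constants lining up to yield exactly $e^{-\frac14 r\log 2r}$, is the delicate bookkeeping at the heart of the lemma.
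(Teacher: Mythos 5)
Your overall Rankin strategy points in the right direction, but the heart of your argument -- step 2 -- breaks down, and the missing ingredient is precisely the new idea in the paper's proof. In your decomposition $n = ab$ with $a \in (z^{1-1/r}, z]$ and $b$ smooth, the Rankin weight $n^{\varepsilon} = a^{\varepsilon} b^{\varepsilon}$ necessarily deposits a factor $a^{\varepsilon}$ on the $f$-carrying variable; since $a > z^{1-1/r}$, this factor is at least $z^{\varepsilon(1-1/r)}$, which swallows the whole saving $z^{-\varepsilon/2}$ (the net factor is $z^{\varepsilon(1/2-1/r)} \geqslant 1$ for $r \geqslant 2$), so no decay at all -- let alone $e^{-\frac14 r \log 2r}$ -- can come out of this bookkeeping, whichever way you choose $\varepsilon$ or which side of \eqref{eq:hyp_5} you apply. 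Note also that $b=1$ is possible (e.g.\ when $n \leqslant z$), so no saving can be extracted from the $b$-sum either; that your Mertens claim $\sum_{p \leqslant z^{1/r}} 1/p = O(\log r)$ is incorrect (it is $\log(\log z/r) + O(1)$); and that the choice $\varepsilon \asymp 1/\log z$ in your step 2 is incompatible with the choice $\varepsilon \asymp r\log 2r/\log z$ that your step 3 requires.

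What the paper does instead is to keep the constraint $n \leqslant z$ and to write the Rankin weight as a divisor sum: $n^{a} = \sum_{d \mid n} J_a(d)$ with $J_a = \id_a \star \mu$. Writing $n = md$ and applying \eqref{eq:hyp_5} in the form $f(md) \leqslant \tau_k(d) f(m)$ gives
$$\sum_{\substack{z^{1/2}<n\leqslant z\\ P^+(n)\leqslant z^{1/r}}}\frac{f(n)}{n} \leqslant z^{-a/2}\sum_{P^+(d)\leqslant z^{1/r}}\frac{J_a(d)\,\tau_k(d)}{d}\ \sum_{m\leqslant z}\frac{f(m)}{m},$$
so the $f$-mass lands on $m\leqslant z$ with the exact weight $1/m$ (no stray $m^{a}$), while the entire Rankin weight is carried by the multiplicative $d$-sum, which factors as an Euler product over $p \leqslant z^{1/r}$ and is bounded by $\exp\bigl(5^k k\sum_{p\leqslant z^{1/r}} p^{a-1}\bigr)$ via $(1-x)^{-k}-1\leqslant 5^k k x$ and then Lemma~\ref{le:1/p_alpha}. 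With $a = \frac{2r\log 2r}{3\log z}$ (admissible since $\log z \geqslant r\log 2r$) the exponent becomes $-\frac{r}{3}\log 2r$ plus an error of size $\frac{63\cdot 2^{2/3}}{20}\,5^k k r^{2/3}/\log 2r$, and the hypothesis $r\geqslant A_k$ together with Lemma~\ref{le:ineg_r} absorbs this error into $\frac{r}{12}\log 2r$, giving exactly $-\frac r4\log 2r$. Your step 3 correctly anticipates this absorption mechanism, but without the divisor-sum device (or some substitute for it) the estimate you set up in step 2 cannot be repaired.
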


\begin{proof}
Set $t:=z^{1/2}$ and $u:=z^{1/r}$, and let $0 < a \leqslant \frac{2}{3}$ be a parameter at our disposal. If $J_a = \id_a \star \, \mu$ is the Jordan arithmetical function of parameter $a$, Rankin's method and \eqref{eq:hyp_5} yield
\begin{align*}
   \sum_{\substack{t < n \leqslant z \\ P^+(n) \leqslant u}} \frac{f(n)}{n} & \leqslant t^{-a} \sum_{\substack{t < n \leqslant z \\ P^+(n) \leqslant u}} \frac{f(n)}{n} n^a = t^{-a} \sum_{\substack{t < n \leqslant z \\ P^+(n) \leqslant u}} \frac{f(n)}{n} \sum_{d \mid n} J_a(d) \\
   & \leqslant t^{-a} \sum_{P^+(d) \leqslant u} \frac{J_a(d)}{d} \sum_{\substack{m \leqslant z/d \\ P^+(m) \leqslant u}} \frac{f(md)}{k} \\
   & \leqslant t^{-a} \sum_{P^+(d) \leqslant u} \frac{J_a(d) \tau_k(d)}{d} \sum_{m \leqslant z} \frac{f(m)}{m}
\end{align*}
with
\begin{align*}
   \sum_{P^+(d) \leqslant u} \frac{J_a(d) \tau_k(d)}{d} & = \prod_{p \leqslant u} \left( 1 + \left( 1 - \frac{1}{p^a} \right) \sum_{\alpha = 1}^\infty \frac{{k+\alpha-1 \choose \alpha}}{p^{(1-a)\alpha}} \right)  \\
   & = \prod_{p \leqslant u} \left( 1 + \left( 1 - \frac{1}{p^a} \right) \left( \left( 1 - \frac{1}{p^{1-a}} \right)^{-k} - 1 \right)  \right) \\
   & \leqslant \exp \left( \sum_{p \leqslant u} \left( \left( 1 - \frac{1}{p^{1-a}} \right)^{-k} - 1 \right) \right)
\end{align*}
and the inequality $(1-x)^{-k}-1 \leqslant 5^{k}kx$, valid for $0 \leqslant x \leqslant \tfrac{4}{5}$ and $k \geqslant 1$, implies that
$$\sum_{P^+(d) \leqslant u} \frac{J_a(d) \tau_k(d)}{d} \leqslant \exp \left( 5^{k} k \sum_{p \leqslant u} \frac{1}{p^{1-a}} \right).$$
Using Lemma~\ref{le:1/p_alpha} with $\alpha = 1 - a$, we derive
\begin{align*}
   \sum_{P^+(d) \leqslant u} \frac{J_a(d) \tau_k(d)}{d} & \leqslant \exp \left( \frac{5^{k} \times \np{1.26} k (3-2a)}{a} \frac{u^a}{\log u} \right) \\ 
   & < \exp \left( \frac{21 \times 5^{k} r k}{10a} \frac{z^{a/r}}{\log z} \right).
\end{align*}
Inserting this estimate in the calculations above, we get
$$\sum_{\substack{t < n \leqslant z \\ P^+(n) \leqslant u}} \frac{f(n)}{n} < \exp \left( - \frac{a}{2} \log z  + \frac{21 \times 5^{k} r k}{10a} \frac{z^{a/r}}{\log z} \right) \sum_{m \leqslant z} \frac{f(m)}{m}$$
and the choice of $a= \frac{2r \log 2r}{3\log z}$ yields 
$$\sum_{\substack{t < n \leqslant z \\ P^+(n) \leqslant u}} \frac{f(n)}{n} < \exp \left( - \frac{r}{3} \log 2r + \frac{63 \times 2^{2/3} \times 5^{k} k r^{2/3}}{20 \log 2r} \right) \sum_{m \leqslant z} \frac{f(m)}{m}$$
and since $r \geqslant A_k$, Lemma~\ref{le:ineg_r} yields 
$$\sum_{\substack{t < n \leqslant z \\ P^+(n) \leqslant u}} \frac{f(n)}{n} < \exp \left( - \frac{r}{3} \log 2r + \frac{r}{12} \log 2r \right) \sum_{m \leqslant z} \frac{f(m)}{m}$$
as asserted.
\end{proof}

\subsection{Proof of Theorem~\ref{th:main}}

In what follows, we set
\begin{equation}
   z := y^{1/3}. \label{eq:z}
\end{equation}
Write each integer $n \in \left( x-y,x \right]$ as $n = p_1^{\alpha_1} \dotsb p_J^{\alpha_J} p_{J+1}^{\alpha_{J+1}} \dotsb p_r^{\alpha_r} := a_n b_n$, with $a_n \leqslant z < a_n p_{J+1}^{\alpha_{J+1}}$ if such an integer $J$ exists, $a_n = 1$ otherwise. Subdivise these integers into four classes:
\begin{enumerate}
   \item[\scriptsize $\triangleright$] \begin{footnotesize}\textsc{Class I}.\end{footnotesize} $P^-(b_n) \geqslant z^{1/2}$ ;
   \item[\scriptsize $\triangleright$] \begin{footnotesize}\textsc{Class II}.\end{footnotesize} $P^-(b_n) < z^{1/2}$ and $a_n \leqslant z^{1/2}$ ;
   \item[\scriptsize $\triangleright$] \begin{footnotesize}\textsc{Class III}.\end{footnotesize} $P^-(b_n) \leqslant \log x \log_2 x$ and $a_n > z^{1/2}$ ;
   \item[\scriptsize $\triangleright$] \begin{footnotesize}\textsc{Class IV}.\end{footnotesize} $\log x \log_2 x < P^-(b_n) < z^{1/2}$ and $a_n > z^{1/2}$.
\end{enumerate}

\subsubsection{Class $\un$}
By \eqref{eq:hyp_5} and Lemma~\ref{le:ineg_tau_k}, we have
$$\sum_{n \in \, \un} f(n) \leqslant \sum_{a \leqslant z} f(a) \, \sum_{\substack{\frac{x-y}{a} <  b \leqslant \frac{x}{a} \\ P^-(b) > z^{1/2}}} k^{\Omega(b)}$$
and since $P^-(b) > z^{1/2} = y^{1/6} \geqslant x^{\frac{1}{6 \ell}}$, we get $\Omega(b) \leqslant \frac{\log x}{\log P^-(b)}\leqslant 6 \ell$, and hence, using Lemma~\ref{le:shiu_2}, we get
\begin{align}
   \sum_{n \in \, \un} f(n) & \leqslant k^{6 \ell} \sum_{a \leqslant z} f(a) \sum_{\substack{\frac{x-y}{a} <  b \leqslant \frac{x}{a} \\ P^-(b) > z^{1/2}}} 1 \leqslant 4k^{6 \ell} \, \frac{y}{\log z} \sum_{a \leqslant z} \frac{f(a)}{a} \notag \\ 
   &  \leqslant 12 \ell k^{6\ell} \, \frac{y}{\log x} \sum_{n \leqslant x} \frac{f(n)}{n}. \label{eq:somme_I_modif}
\end{align}

\subsubsection{Class $\de$}
Noticing that to each $n \in \de$, there correspond a prime number $p$ and an integer $s$ such that $p^s \| n$, $p \leqslant z^{1/2}$ and $p^s > z^{1/2}$, and also noticing that $s_p$ is the least integer $s$ such that $p^s > z^{1/2}$, we derive 
$$\sum_{n \in \, \de} 1  \leqslant \sum_{p \leqslant z^{1/2}} \; \sum_{\substack{x-y < n \leqslant x \\ p^{s_p} \mid n}} 1 = \sum_{p \leqslant z^{1/2}} \left( \left \lfloor \frac{x}{p^{s_p}} \right \rfloor - \left \lfloor \frac{x-y}{p^{s_p}} \right \rfloor \right) \leqslant \sum_{p \leqslant z^{1/2}} \left( \frac{y}{p^{s_p}} + 1 \right).$$
Now $p \leqslant z^{1/2}$ and $p^{s_p} > z^{1/2}$ imply that $s_p \geqslant 2$, and hence $p^{-s_p} \leqslant \min \left( z^{-1/2}, p^{-2} \right)$, so that, using $z \geqslant 8^4$
\begin{align*}
   \sum_{n \in \, \de} 1 & \leqslant y \left( \sum_{p \leqslant z^{1/4}} \frac{1}{z^{1/2}} + \sum_{p > z^{1/4}} \frac{1}{p^2} \right) + z^{1/2} \\
   & \leqslant y z^{-1/4} \left( 1 + \frac{\np{2.002}}{\log(z^{1/4})} \right) + z^{1/2} \\
   & < 2 y z^{-1/4} + y^{1/4} \leqslant 3 y^{11/12}. 
\end{align*}
Hypothesis \eqref{eq:hyp_5} and Lemma~\ref{le:ineg_tau_k}, applied with $\varepsilon = (16 \ell)^{-1}$, imply that
$$f(n) \leqslant C_{k,\ell} \, n^{\frac{1}{16 \ell}} \leqslant C_{k,\ell} \, x^{\frac{1}{16 \ell}} \leqslant C_{k,\ell} \, y^{1/16}$$
where $C_{k,\ell}$ is given in \eqref{eq:C_{k,l}}, so that
\begin{equation}
   \sum_{n \in \, \de} f(n) \leqslant 3 A \, C_{k,\ell} y^{47/48} \leqslant 3 C_{k,\ell} \frac{y}{\log y} \leqslant 3 \ell C_{k,\ell} \frac{y}{\log x}. \label{eq:somme_II_modif}
\end{equation}

\subsubsection{Class $\tr$}
Using the notation \eqref{eq:Psi}, we have
\begin{align*}
   \sum_{n \in \, \tr} 1 & \leqslant \sum_{\substack{z^{1/2} < a \leqslant z \\ P^+(a) < \log x \log_2 x}} \; \sum_{\substack{x-y < n \leqslant x \\ a \mid n}} 1 \leqslant \sum_{\substack{z^{1/2} < a \leqslant z \\ P^+(a) < \log x \log_2 x}} \left( \frac{y}{a} + 1 \right) \\
   & \leqslant y z^{-1/2} \Psi(z, \log x \log_2 x) + z,
\end{align*}
and Lemma~\ref{le:shiu_4} yields
$$\sum_{n \in \, \tr} 1 \leqslant y z^{-1/4} + z \leqslant 2 y^{11/12}.$$
Furthermore, as in the case of sum II, we have $f(n) \leqslant C_{k,\ell} \, y^{1/16}$, and therefore
\begin{equation}
   \sum_{n \in \, \tr} f(n) \leqslant 2 \ell C_{k,\ell} \frac{y}{\log x}. \label{eq:somme_III_modif}
\end{equation}

\subsubsection{Class $\qua$}
We have
$$\sum_{n \in \, \qua} f(n) \leqslant \sum_{z^{1/2} < a \leqslant z} f(a) \ \sum_{\substack{x-y < n \leqslant x \\ a_n = a \\ \max(P^+(a), \log x \log_2 x) < P^-(b_n) \leqslant z^{1/2}}} k^{\Omega(b_n)}.$$
Set $R := \left \lfloor \frac{\log z}{\log \left( \log x \log_2 x \right)} \right \rfloor$, so that $z^{\frac{1}{R+1}} < \log x \log _2 x$, and, for all $r \in \{2, \dotsc,R \}$, consider the integers $n$ satisfying $z^{\frac{1}{r+1}} < P^-(b_n) \leqslant z^{1/r}$. For those integers, we have $P^+(a_n) = P^+(a) < P^-(b_n) \leqslant z^{1/r}$ and also
$$\Omega(b_n) \leqslant \frac{\log x}{\log P^-(b_n)} < \frac{(r+1) \log x}{\log z} \leqslant 3 (r+1) \ell \leqslant 5 r \ell.$$
We deduce that
\begin{align*}
   \sum_{n \in \, \qua} f(n) & \leqslant \sum_{r=2}^{R} k^{5 r \ell} \; \sum_{\substack{z^{1/2} < a \leqslant z \\ P^+(a) < z^{1/r}}} f(a) \; \sum_{\substack{x-y < n \leqslant x \\ a \mid n \\ z^{\frac{1}{r+1}} < P^-(n/a) \leqslant z^{1/r}}} 1 \\
   & \leqslant \sum_{r=2}^{R} k^{5 r \ell} \; \sum_{\substack{z^{1/2} < a \leqslant z \\ P^+(a) < z^{1/r}}} f(a) \; \sum_{\substack{\frac{x-y}{a} < d \leqslant \frac{x}{a} \\ P^-(d) > z^{\frac{1}{r+1}}}} 1 \\
   & \leqslant \frac{2 y}{\log z} \, \sum_{r=2}^{R} (r+1) k^{5 r \ell} \; \sum_{\substack{z^{1/2} < a \leqslant z \\ P^+(a) < z^{1/r}}} \frac{f(a)}{a} \\
   & \leqslant \frac{3 y}{\log z} \, \sum_{r=2}^{R} r k^{5 r \ell} \; \sum_{\substack{z^{1/2} < a \leqslant z \\ P^+(a) < z^{1/r}}} \frac{f(a)}{a} \\
   & = \frac{3 y}{\log z} \, \left( \sum_{r=2}^{A_k} + \sum_{A_k < r \leqslant R} \right) r k^{5 r \ell} \; \sum_{\substack{z^{1/2} < a \leqslant z \\ P^+(a) < z^{1/r}}} \frac{f(a)}{a} \\
   & := \Sigma_1 + \Sigma_2,
\end{align*}
say, where we used Lemma~\ref{le:shiu_2} in the $3$rd line, and the bound $r+1 \leqslant \frac{3}{2}r$ in the $4$th one, and where $A_k$ is given in \eqref{eq:A_k}. The sum $\Sigma_1$ is treated trivially with the help of Lemma~\ref{le:shiu_10} yielding
\begin{equation}
   \Sigma_1 < 9 A_k \ell \max \left( A_k, 2k^{5 \ell A_k}\right) \frac{y}{\log x} \sum_{n \leqslant z} \frac{f(n)}{n}. \label{eq:Sigma_1}
\end{equation}
As for $\Sigma_2$, note that $A_k < r \leqslant R \leqslant \frac{\log z}{\log_2 x} \leqslant \frac{\log z}{\log_2 z}$ and $\log z \geqslant \frac{1}{3 \ell} \log x \geqslant e^2$. Hence
$$r \log 2r \leqslant \log z \times \frac{\log \left( \frac{2\log z}{\log_2 z} \right) }{\log_2 z} \leqslant \log z$$
so that Lemma~\ref{le:shiu_9} applies, and gives
\begin{align*}
   \Sigma_2 & < \frac{3 y}{\log z} \, \sum_{A_k < r \leqslant R} \exp \left( -\tfrac{r}{4} \log 2r + 5 r \ell \log k + \log r \right) \sum_{n \leqslant z} \frac{f(n)}{n} \\\\
   & \leqslant \frac{3y}{\log z} \ \sum_{r > A_k} \exp \left( -\tfrac{r}{4} \log 2r +  r \phi_{k,\ell} \right) \sum_{n \leqslant z} \frac{f(n)}{n}
\end{align*}
where $\phi_{k,\ell} := 5 \ell \log k + e^{-1}$, and where we used $\log r \leqslant e^{-1}r$. Now set $g(r) := \tfrac{r}{4} \log 2r -  r \phi_{k,\ell}$ and recall $B_{k,\ell}$ which is given in \eqref{eq:B_{k,l}}.
\begin{enumerate}
   \item[\scriptsize $\triangleright$] If $A_k \geqslant B_{k,\ell}$, then $r > B_{k,\ell}$ and hence $g(r) > 0$, $g^{\, \prime}$ is strictly increasing and $g^{\, \prime}(r) \geqslant \frac{1}{4}$, so that, by Lemma~\ref{le:tech}, we get $\sum_{r > A_k} e^{-g(r)} \leqslant 5$, and thus
   \begin{equation}
      \Sigma_2 \leqslant 45 \ell \frac{y}{\log x} \sum_{n \leqslant z} \frac{f(n)}{n}. \label{eq:Sigma_2_1}
   \end{equation}
   \item[\scriptsize $\triangleright$] If $A_k < B_{k,\ell}$, then
$$\sum_{A_k < r \leqslant B_{k,\ell}} e^{-g(r)} \leqslant (2A_k)^{-A_k/4} \ \sum_{A_k < r \leqslant B_{k,\ell}} e^{r \phi_{k,\ell}} \leqslant  (2A_k)^{-A_k/4}e^{B_{k,\ell} \phi_{k,\ell}}.$$
   In this case, we then have
   \begin{equation}
      \Sigma_2 <9 \ell \left( (2A_k)^{-A_k/4}e^{B_{k,\ell} \phi_{k,\ell}} + 5 \right) \, \frac{y}{\log x} \sum_{n \leqslant z} \frac{f(n)}{n}. \label{eq:Sigma_2_2}
   \end{equation}
\end{enumerate} 
Therefore, by \eqref{eq:Sigma_1} and \eqref{eq:Sigma_2_1} or \eqref{eq:Sigma_2_2}, we get
\begin{equation}
   \sum_{n \in \, \qua} f(n) < 9 \ell D_{k,\ell} \,\frac{y}{\log x} \sum_{n \leqslant z} \frac{f(n)}{n} \label{eq:somme_IV_modif}
\end{equation}
with $D_{k,\ell}$ defined in \eqref{eq:D_{k,l}}.

\subsubsection{Conclusion}

\noindent
The result follows with \eqref{eq:somme_I_modif}, \eqref{eq:somme_II_modif}, \eqref{eq:somme_III_modif} and \eqref{eq:somme_IV_modif}.
\qed

\section{Proof of Corollaries~\ref{cor:Hooley} and~\ref{cor:Hooley_restricted_omega}}

\begin{prop}
\label{pro:Delta_bis}
For all $x \geqslant 1$ and all $t \in \left[ 0,1\right] $
$$\sum_{n \leqslant x} t^{\omega(n)} \Delta (n) < \np{9380} \, x ( \log ex)^{-1+4t/\pi}$$
and
$$\sum_{n \leqslant x} \frac{t^{\omega(n)}\Delta (n)}{n} < \np{16748}  \, ( \log ex)^{4t/\pi}.$$
\end{prop}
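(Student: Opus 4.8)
The plan is to follow Hooley's technique for the average order of $\Delta$ (see \cite{hoo79} and \cite{hall88}), but to carry the weight $t^{\omega(n)}$ through every step and keep all constants numerical. The second, Dirichlet‑type, estimate is the one that will actually be fed into Theorem~\ref{th:main} to produce Corollaries~\ref{cor:Hooley} and~\ref{cor:Hooley_restricted_omega}; the first is recorded alongside it because Hooley's counting yields it with essentially no extra work, carrying the classical exponent $\tfrac4\pi-1$. I would \emph{not} pass from one estimate to the other by a single partial summation: the integral $\int_1^{\log ex}v^{4t/\pi-1}\,\mathrm dv$ it produces carries a factor $\tfrac{\pi}{4t}$, which is not uniform as $t\to0^{+}$, so I would prove each inequality directly and in the full range $x\ge1$, with no largeness hypothesis on $x$.

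\textbf{Reductions.} I would first reduce to squarefree arguments. Write $n=qm$ with $q$ the squarefull kernel of $n$ and $m$ squarefree, $(q,m)=1$; then $\omega(n)=\omega(q)+\omega(m)$, and since $\Delta=\Delta_2$ satisfies \eqref{eq:hyp_5} with $k=2$ (Hall--Tenenbaum), $\Delta(qm)\le\tau(q)\Delta(m)$. As $t\in[0,1]$ this gives $t^{\omega(n)}\Delta(n)\le\bigl(t^{\omega(q)}\tau(q)\bigr)\bigl(t^{\omega(m)}\Delta(m)\bigr)$, and the sum over squarefull $q$ contributes only an absolute factor, since $\sum_{q\ \mathrm{squarefull}}\tau(q)q^{-s}$ converges for $\Re s>\tfrac12$ (so the tail $q>\sqrt x$ is negligible against $x(\log ex)^{4t/\pi-1}$ by a crude bound). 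It then remains to bound the two sums over squarefree $m$. For squarefree $m$ I would replace $\Delta(m)=\max_u\Delta(m;u)$ (see \eqref{eq:Delta(n;u)}) by integer‑aligned blocks: any unit interval meets at most two of the intervals $(e^{j},e^{j+1}]$, hence $\Delta(m)\le 2\max_{j\in\Z}\Delta(m;j)$, and, writing a maximum as a sum of indicators, $\Delta(m)\le 2\sum_{v\ge1}\mathbf 1\bigl\{\exists j\in\Z:\ \Delta(m;j)\ge v\bigr\}$. Interchanging the order of summation, everything reduces to controlling, for each $v\ge1$, the $t^{\omega}$‑weighted number of squarefree $m$ possessing $v$ divisors within a factor $e$ of one another; encoding those divisors through their coprime ``difference divisors'' $a/b$ (with $ab\mid m$ and $a/b\in[1,e]$) turns this into a count of squarefree integers with a prescribed number of small divisors of bounded size --- exactly the shape to which Hooley's argument applies, recursively.

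\textbf{The combinatorial core, and the main obstacle.} The heart of the matter is to show that, after this reduction, the clustered‑divisor configurations are dominated by a multiplicative majorant whose Dirichlet series behaves, near $s=1$, like a power $\zeta(s)^{4t/\pi}$; concretely, that summing the weighted counts over $v$ --- via Rankin's method together with Mertens‑type estimates in the spirit of Lemma~\ref{le:1/p_alpha} and Lemma~\ref{le:shiu_9} --- telescopes into $C_1(\log ex)^{4t/\pi}$ (and its companion $C_1'\,x(\log ex)^{4t/\pi-1}$). The exponent $\tfrac4\pi$ is forced by the extremal optimisation over the sizes of the prime factors of $m$ intrinsic to Hooley's argument, the same circle of ideas that produces the constant $\mathfrak m$ of the notation. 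The genuine difficulty lies in making this optimisation completely explicit, in summing the resulting error terms over $v$ without the constant blowing up, in keeping the estimate uniform in $t\in[0,1]$ (so that the optimisation of $t$ in Corollary~\ref{cor:Hooley_restricted_omega} is legitimate), and in keeping the whole argument unconditional for every $x\ge1$. Reassembling with the squarefull factor from the reduction step then yields the displayed bounds with the explicit constants $\np{9380}$ and $\np{16748}$.
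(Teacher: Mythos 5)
Your proposal is a programme, not a proof: the step you yourself label ``the combinatorial core'' --- showing that the $t^{\omega}$-weighted counts of clustered-divisor configurations, summed over the level $v$, telescope into $C\,(\log ex)^{4t/\pi}$ with a numerical $C$ --- is the entire content of the proposition, and you leave it as an acknowledged difficulty. Nothing in your sketch actually produces the exponent $4/\pi$, the uniformity in $t\in[0,1]$, or the constants $\np{9380}$ and $\np{16748}$. Moreover, the route you outline (squarefree reduction, $\Delta\leqslant 2\max_{j}\Delta(\cdot\,;j)$, writing the maximum as $\sum_{v\geqslant1}\mathbf{1}\{\cdots\}$, then counting integers with $v$ divisors within a factor $e$ via coprime divisor pairs) is the Erd\H{o}s--Hall--Tenenbaum divisor-concentration approach, precisely the kind of argument the introduction of the paper singles out as very hard to make explicit; it is not Hooley's technique. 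The paper's proof is Fourier-analytic: $\Delta(n)\leqslant\frac{1}{\sin 1}\int_0^1|\tau(n;v)|\,\mathrm{d}v$ with $\tau(n;v)=\sum_{d\mid n}d^{iv}$, multiplicativity of $n\mapsto t^{\omega(n)}|\tau(n;v)|$, the explicit mean-value theorem of Lemma~\ref{le:mean} fed by the numerical prime sums of Lemma~\ref{le:sums_primes}, and the explicit Mertens-type bound for $\sum_{p\leqslant x}|\tau(p;v)|/p$ of Lemma~\ref{le:tau_restreint_bis}. There the exponent $4t/\pi$ arises from the mean value $\tfrac{2}{\pi}$ of $|\cos\tfrac{u}{2}|$ (Lemmas~\ref{le:moyenne_periodic_fct} and~\ref{le:min}), not from an ``extremal optimisation over the sizes of the prime factors''; the constant $\np{9380}$ comes from the final numerical choices $q=\tfrac35\log T$, $T=e^{8.2}$, with the range $x<4\times10^{41}$ disposed of by the trivial bound $\sum_{n\leqslant x}\tau(n)\leqslant x\log(ex)$. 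So even granting your reductions (which are fine as far as they go), the quantitative heart of the proof is missing.

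Your objection to deducing the second estimate from the first by partial summation is a fair observation and worth recording: the paper does assert exactly that one partial summation suffices, and, as you say, the resulting factor $\frac{(\log ex)^{4t/\pi}-1}{4t/\pi}$ is not $O\bigl((\log ex)^{4t/\pi}\bigr)$ uniformly as $t\to0^{+}$, so the deduction as stated needs repair for small $t$. But the repair is easy within the paper's framework --- bound $\sum_{n\leqslant x}t^{\omega(n)}|\tau(n;v)|/n$ directly by the Euler product $\prod_{p\leqslant x}\bigl(1+t|\tau(p;v)|/p+\cdots\bigr)$ and apply Lemma~\ref{le:tau_restreint_bis} again, which gives the $(\log ex)^{4t/\pi}$ bound without any $1/t$ loss --- whereas your alternative of proving both bounds ``directly'' by the combinatorial route inherits the unproved core above. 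In short: valid criticism of one glossed sentence, but no proof of either displayed inequality.
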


\noindent
The $2$nd inequality comes from the first one and a partial summation. Therefore, the rest of the text is devoted to the proof of the bound
\begin{equation}
   \sum_{n \leqslant x} t^{\omega(n)}\Delta (n) < \np{9380} \, x ( \log ex)^{-1+4t/\pi}. \label{eq:explicit_Hooley}
\end{equation}

Hooley's method rests on the use of the following new arithmetic function: for all $n \in \Z_{\geqslant 1}$ and $v \in \R$, define
$$\tau(n;v) := \sum_{d \mid n} d^{iv}.$$
Note that this function is multiplicative and satisfies the trivial bound
\begin{equation}
    \left| \tau(n;v) \right| \leqslant \tau(n). \label{eq:trivial_tau}
\end{equation}

\subsection{Tools}

\begin{lemma}
\label{le:mean}
Let $f : \Z_{\geqslant 1} \to \R_{\geqslant 0}$ be a non-negative multiplicative function, such that there exists $a_1,a_2,a_3 > 0$ such that, for all $x \geqslant 2$, we have
\begin{center}
\begin{tabular}{llll}
(i) & $\displaystyle \frac{1}{x} \, \sum_{p \leqslant x} f(p) \log p \leqslant a_1$; & (ii) & $\displaystyle \sum_p \sum_{\alpha \geqslant 2} \frac{f \left( p^\alpha \right)}{p^{\alpha}} \leqslant a_2$; \\
(iii) & $\displaystyle \sum_p \sum_{\alpha \geqslant 2} \frac{f \left( p^\alpha \right) \log p^{\alpha}}{p^{\alpha}} \leqslant a_3$. &
\end{tabular}
\end{center}
Then, for all $x\geqslant 1$, we have
$$\sum_{n \leqslant x} f(n) \leqslant e^{a_2} \left( a_1+a_3+1\right) \frac{x}{\log(ex)} \exp \left( \sum_{p\leqslant x} \frac{f(p)}{p} \right).$$
\end{lemma}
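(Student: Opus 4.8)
The plan is to split the statement into two independent parts. The first is purely multiplicative: by positivity and the Euler product expansion,
$$\sum_{n \leqslant x} \frac{f(n)}{n} \leqslant \prod_{p \leqslant x} \Bigl( 1 + \sum_{\alpha \geqslant 1} \frac{f(p^\alpha)}{p^\alpha} \Bigr) \leqslant \prod_{p \leqslant x} \exp \Bigl( \frac{f(p)}{p} + \sum_{\alpha \geqslant 2} \frac{f(p^\alpha)}{p^\alpha} \Bigr) \leqslant e^{a_2} \exp \Bigl( \sum_{p \leqslant x} \frac{f(p)}{p} \Bigr),$$
where the middle step uses $1+u \leqslant e^u$ and the last step uses hypothesis (ii). Granting this, the lemma reduces to the Hall--Tenenbaum-type mean value bound
$$\sum_{n \leqslant x} f(n) \leqslant (a_1+a_3+1) \, \frac{x}{\log(ex)} \sum_{n \leqslant x} \frac{f(n)}{n} \qquad (x \geqslant 1),$$
which carries the real content (and does not involve (ii)); compare \cite{hall88}.

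For this, write $F(x) := \sum_{n \leqslant x} f(n)$ and $G(x) := \sum_{n \leqslant x} f(n)/n$. Since $x \mapsto x/\log(ex)$ is non-decreasing on $[1,+\infty)$ and $G$ is non-decreasing, it suffices to prove the bound for $x = N \in \Z_{\geqslant 1}$, which I would do by strong induction on $N$. The engine is the Chebyshev-type identity
$$F(N) \log N = \sum_{n \leqslant N} f(n) \log n + \sum_{n \leqslant N} f(n) \log(N/n).$$
Writing each $n$ in the first right-hand sum as $n = p^\nu m$ with $p^\nu \| n$ (so $f(n) = f(p^\nu) f(m)$) and dropping the condition $(m,p)=1$ gives $\sum_{n \leqslant N} f(n) \log n \leqslant \sum_{p^\nu \leqslant N} f(p^\nu) \log(p^\nu)\, F(N/p^\nu)$, while partial summation gives $\sum_{n \leqslant N} f(n) \log(N/n) = \int_1^N F(t)\,\mathrm{d}t/t$. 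One then inserts the inductive hypothesis $F(t) \leqslant (a_1+a_3+1)\, t\,G(t)/\log(et)$ (valid for $1 \leqslant t < N$) into the terms $F(N/p^\nu)$ and into the integrand.

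In the resulting estimate, the prime-power terms with $\nu \geqslant 2$ are absorbed by hypothesis (iii) and produce the constant $a_3$; the prime terms $\nu = 1$ are handled by partial summation against the bound $\sum_{p \leqslant t} f(p)\log p \leqslant a_1 t$ coming from (i) and produce $a_1$; the residual integral (with the inductive bound inserted) is of lower order, and, together with the passage from $\log N$ to $\log(eN)$, produces the remaining $+1$. The bounded range of $x$, where $1/\log(ex)$ is not small, has to be checked directly; there one uses that (i) already forces the small values $f(p^\alpha)$ to be bounded in terms of $a_1$. I expect the main obstacle to be the constant bookkeeping: one must organise the estimates so that the leading constant is exactly $a_1+a_3+1$ rather than, say, $2a_1 + \cdots$, and this is delicate because for the extremal $f$ the prime-power sum $\sum_{p^\nu \leqslant N} \frac{f(p^\nu)\log p^\nu}{p^\nu}\cdot\frac{G(N/p^\nu)}{\log(eN/p^\nu)}$ is of essentially the same size as $G(N)$. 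In particular one cannot bound it by replacing $G(N/p^\nu)$ with $G(N)$ --- that introduces a spurious factor $\log_2 N$ from primes $p$ close to $N$ --- so the weight $G(N/p^\nu)/\log(eN/p^\nu)$ must be kept intact and compared to $G(N)$ by a sharper monotonicity argument. The remaining manipulations are the routine Erd\H{o}s--Wolke--Shiu bookkeeping already used throughout the paper.
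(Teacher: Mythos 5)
Your first step (the reduction producing the factor $e^{a_2}\exp\bigl(\sum_{p\leqslant x}f(p)/p\bigr)$) is correct, and your identification of the remaining content as the Hall--Tenenbaum bound $\sum_{n\leqslant x}f(n)\leqslant (a_1+a_3+1)\,\frac{x}{\log(ex)}\sum_{n\leqslant x}\frac{f(n)}{n}$ is exactly right; note, for comparison, that the paper does not prove this lemma at all but simply cites \cite[Theorem~01]{hall88} and \cite[Theorem~4.9]{bor20}. The genuine gap is in your treatment of this second step. After writing $n=p^{\nu}m$, $p^{\nu}\|n$, you bound $\sum_{n\leqslant N}f(n)\log n$ by $\sum_{p^{\nu}\leqslant N}f(p^{\nu})\log(p^{\nu})\,F(N/p^{\nu})$ and then feed in the inductive hypothesis. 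This does not close: inserting $F(N/p^{\nu})\leqslant C\,(N/p^{\nu})\,G(N/p^{\nu})/\log(eN/p^{\nu})$ forces you to control $\sum_{p\leqslant N}\frac{f(p)\log p}{p}\cdot\frac{G(N/p)}{\log(eN/p)}$, and even keeping the weights intact this is too large: for $f(p)$ of size $a_1$ the primes $p\leqslant\sqrt{N}$ alone contribute $\gtrsim a_1\,G(\sqrt{N})$, so the induction would need the quantity $\sum_{p\leqslant N}\frac{f(p)\log p}{p\log(eN/p)}$ to be bounded by $1-1/C$, which hypothesis (i) does not give. The ``sharper monotonicity argument'' you invoke is precisely the missing idea, and it is not routine bookkeeping; as written, the proof of the key inequality is not there.

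The repair is to avoid induction altogether by summing in the other order, which is the classical proof behind the cited results. Write $\log(ex)=\log n+\log(ex/n)$ and use $\log(et)\leqslant t$ for $t\geqslant 1$ to get $\sum_{n\leqslant x}f(n)\log(ex/n)\leqslant x\,G(x)$ with $G(x):=\sum_{n\leqslant x}f(n)/n$. For the other piece, expand $\log n=\sum_{p^{\nu}\|n}\log p^{\nu}$, use multiplicativity and drop the coprimality condition to get the double sum over pairs, and then sum over the cofactor \emph{first}:
\begin{align*}
\sum_{n\leqslant x}f(n)\log n
&\leqslant \sum_{m\leqslant x} f(m)\sum_{p^{\nu}\leqslant x/m} f\left(p^{\nu}\right)\log p^{\nu}
\leqslant \sum_{m\leqslant x} f(m)\left( a_1\,\frac{x}{m}+\frac{x}{m}\sum_{p}\sum_{\nu\geqslant 2}\frac{f\left(p^{\nu}\right)\log p^{\nu}}{p^{\nu}}\right)
\leqslant \left(a_1+a_3\right) x\,G(x),
\end{align*}
where the $\nu=1$ part uses (i) at the point $x/m$ and the $\nu\geqslant 2$ part uses $1\leqslant x/(mp^{\nu})$ together with (iii). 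Adding the two pieces gives $F(x)\log(ex)\leqslant (a_1+a_3+1)\,x\,G(x)$ directly for all $x\geqslant 1$, with no induction, no partial summation, and no loss in the constant; combined with your (correct) Euler-product step this yields the lemma.
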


\begin{proof}
See \cite[Theorem~01]{hall88} or \cite[Theorem~4.9]{bor20}.
\end{proof}

\begin{lemma}
\label{le:TNP_explicite}
For all $x \geqslant 2$
$$\pi(x) = \li(x) + O^\star \left( \frac{\np{4.6} \, x}{(\log x)^3} \right) \quad \left( x \geqslant 2 \right)$$
and 
$$\sum_{p \leqslant x} \frac{1}{p} = \log \log x + B + O^\star \left( \frac{\np{0.6}}{(\log x)^2} \right) \quad \left( x \geqslant 127 \right).$$
\end{lemma}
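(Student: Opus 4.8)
The plan is to establish the two displayed estimates independently: the bound on $\pi(x)$ is a citable explicit form of the prime number theorem, while the Mertens-type sum will be deduced from it by Abel summation.

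\emph{The estimate for $\pi(x)$.} For $x$ beyond a moderate threshold $x_0$, inequalities of the shape $|\pi(x) - \li(x)| \leqslant c\, x (\log x)^{-3}$ with $c$ comfortably smaller than $\np{4.6}$ are available in the literature on explicit prime counting (Rosser--Schoenfeld, Dusart, Trudgian, Platt--Trudgian). For $2 \leqslant x \leqslant x_0$ one verifies the inequality directly: on this compact range $x (\log x)^{-3}$ is bounded below away from $0$, so the deliberately wasteful constant $\np{4.6}$ leaves ample room once one compares tabulated values of $\pi$ with those of the logarithmic integral (taken as a Cauchy principal value near $x = 2$). The purpose of using $\np{4.6}$ rather than the sharper known constants is exactly to make a single clean bound valid all the way down to $x = 2$.

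\emph{The Mertens-type estimate.} Starting from the Abel summation identity
\[
\sum_{p \leqslant x} \frac 1p = \frac{\pi(x)}{x} + \int_2^x \frac{\pi(t)}{t^2}\,\mathrm{d}t,
\]
I would substitute $\pi(t) = \li(t) + O^\star\big(\np{4.6}\, t (\log t)^{-3}\big)$. Integration by parts turns $\int_2^x \li(t)\, t^{-2}\,\mathrm{d}t$ into $\log\log x - \li(x)/x$ plus an explicit constant, so after cancellation of the $\li(x)/x$ terms one is left with $\sum_{p\leqslant x} 1/p = \log\log x + B + (\text{error})$, where $B$ is identified with the Mertens constant by letting $x \to \infty$ (the leftover error integral $\int_x^\infty(\pi(t)-\li(t))\,t^{-2}\,\mathrm{d}t$ converges and absorbs the difference between the naive constant and $B$). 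The error term is then bounded by $\np{4.6}(\log x)^{-3} + \int_x^\infty |\pi(t)-\li(t)|\,t^{-2}\,\mathrm{d}t$, and to bring this under $\np{0.6}(\log x)^{-2}$ for $x \geqslant 127$ one uses, in the tail, a sharper explicit estimate of the form $|\pi(t)-\li(t)| \leqslant \varepsilon\, t (\log t)^{-2}$ with $\varepsilon$ small (valid for $t$ large), treating the bounded initial segment numerically. Alternatively, this second inequality can simply be quoted from Rosser--Schoenfeld or Dusart.

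The main obstacle is the bookkeeping of constants in the Mertens estimate: the crude bound $\np{4.6}\,t(\log t)^{-3}$ is too lossy both near $t = 2$ and in the tail to yield $\np{0.6}$ on its own, so one must combine it with a sharper PNT estimate in the relevant ranges and then confirm that the accumulated constant genuinely drops below $\np{0.6}$ from $x = 127$ onward --- which, after a monotonicity argument in $x$, reduces to a finite numerical check at $x = 127$.
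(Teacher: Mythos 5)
Your plan for the $\pi(x)$ bound has the same architecture as the paper's proof (quote an explicit estimate beyond a threshold, verify the remaining compact range by direct computation), but be aware of how the paper keeps the finite check tiny: for $\np{2657} < x \leqslant \np{2.169}\times 10^{25}$ it invokes Johnston's partial--RH--verification bound $\left|\pi(x)-\li(x)\right| < \frac{1}{8\pi}\sqrt{x}\log x$, and beyond that range the Saouter--Trudgian--Demichel bound $\np{0.51}\,x(\log x)^{-3}$, so only $2 \leqslant x \leqslant \np{2657}$ needs checking at the primes. Purely classical bounds of the shape $c\,x(\log x)^{-3}$ with $c$ small either have astronomically large thresholds (if they come from $\exp(-c\sqrt{\log x})$-type estimates) or thresholds around $10^{9}$--$10^{19}$ (Dusart-type $\theta$-estimates), which would turn your ``moderate threshold $x_0$'' verification into a heavy computation. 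This is a practical rather than logical caveat.

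The genuine gap is in your primary route to the Mertens-type estimate. After Abel summation the error is $\frac{|R(x)|}{x} + \int_x^\infty \frac{|R(t)|}{t^2}\,\mathrm{d}t$ with $R = \pi - \li$, and a tail bound of the shape $|R(t)| \leqslant \varepsilon\, t(\log t)^{-2}$ only gives $\int_x^\infty |R(t)|\,t^{-2}\,\mathrm{d}t \leqslant \varepsilon/\log x$, which exceeds $\np{0.6}(\log x)^{-2}$ for all large $x$ no matter how small $\varepsilon$ is: a saving of $(\log t)^{-2}$ in the integrand integrates to a saving of only $(\log x)^{-1}$. To make this route work you would need $|R(t)| \leqslant c\,t(\log t)^{-3}$ in the tail with $c$ roughly at most $1.1$ (or a $\sqrt{t}$-type bound in the medium range), i.e.\ exactly the kind of input the paper cites. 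Your fallback of simply quoting Rosser--Schoenfeld is in fact what the paper does, but note that the $\np{0.5}(\log x)^{-2}$ two-sided estimate of \cite[Theorem~5]{ros62} is only available from roughly $x > 286$ (the paper uses it for $x > 293$), so the claimed range $x \geqslant 127$ still requires a finite verification on $\left[127, 293\right]$; and since $\sum_{p \leqslant x} 1/p - \log\log x - B$ jumps at every prime, a single check at $x = 127$ cannot suffice --- the paper checks the quantity at each prime $p_n$ with $31 \leqslant n \leqslant 62$ and controls the behaviour only between consecutive primes. With these two repairs (a genuinely $(\log t)^{-3}$-strength tail input, and a prime-by-prime check on the gap range) your argument goes through.
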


\begin{proof}
If $p_{1}=2 \leqslant x \leqslant p_{384} = \np{2657}$, we numerically check the first inequality via
$$\left| \frac{(\log x)^3}{x} \left( \pi(x) - \li(x) \right) \right| \leqslant \left| \frac{(\log p_n)^3}{p_n} \left( n - \li(p_n) \right) \right|$$
for all $x \in \left[ p_n \, , \, p_{n+1} \right]$ and all $n \in \{1, \dotsc,384\}$. If $\np{2657} < x \leqslant \np{2.169} \times 10^{25}$, then we use \cite[(3.2)]{john22} stating that, in this region, we have $\left| \pi(x) - \li(x) \right| < \frac{1}{8 \pi} \, \sqrt{x} \, \log x \leqslant \frac{\np{4.6}x}{(\log x)^3}$. Finally, if $x > \np{2.169} \times 10^{25}$, we make appeal to \cite[Theorem~9.1]{saou15} stating that
$\left| \pi(x) - \li(x) \right| < \frac{\np{0.51}x}{(\log x)^3}$. The second inequality can be proved in a similar way: if $p_{31}=127 \leqslant x \leqslant p_{62} = 293$, we numerically check it via
$$\left| (\log x)^2 \left( \sum_{p \leqslant x} \frac{1}{p} - \log \log x - B \right) \right| \leqslant \left| (\log p_n)^2 \left( \sum_{p \leqslant p_n} \frac{1}{p} - \log \log p_n - B \right) \right|$$
for all $x \in \left[ p_n \, , \, p_{n+1} \right]$ and all $n \in \{31, \dotsc,62\}$, and if $x > 293$, then we use \cite[Theorem~5]{ros62} giving $\sum_{p \leqslant x} \frac{1}{p} = \log \log x + B + O^\star \left( \frac{\np{0.5}}{(\log x)^2} \right)$.
\end{proof}

\begin{lemma}
\label{le:moyenne_periodic_fct}
Let $f : \R \to \R$ be a $\mathbb{T}$-periodic function, $\mathbb{T} \geqslant 1$. Set 
\begin{enumerate}
   \item[(i)] $\displaystyle \mu_f := \frac{1}{\mathbb{T}} \int_0^\mathbb{T} f(t) \, \mathrm{d}t$ ;
   \item[(ii)] $\displaystyle m_f := \max_{0 \leqslant c < 1} \left| \int_0^{c \mathbb{T}} \left( f(t) - \mu_f \right) \, \mathrm{d}t \right|$.
\end{enumerate}
Then, for all $1 \leqslant a \leqslant b$, we have
$$\int_a^b \frac{f(t)}{t} \, \mathrm{d}t = \mu_f \log \left( \frac{b}{a} \right) + O^\star \left( \frac{2m_f}{a} \right).$$ 
\end{lemma}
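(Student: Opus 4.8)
The plan is to introduce the antiderivative $F(x) := \int_0^x \bigl( f(t) - \mu_f \bigr)\,\mathrm{d}t$ of the mean-zero part of $f$ and to reduce the estimate to a single integration by parts against $1/t$. First I would observe that $F$ is $\mathbb{T}$-periodic: indeed $F(x+\mathbb{T}) - F(x) = \int_x^{x+\mathbb{T}} \bigl( f(t) - \mu_f \bigr)\,\mathrm{d}t = \mathbb{T}\mu_f - \mathbb{T}\mu_f = 0$ by the $\mathbb{T}$-periodicity of $f$ and the definition of $\mu_f$. Since $F(0) = 0$ and $F$ is continuous, its supremum over $\R$ equals its supremum over $[0,\mathbb{T})$, and evaluating at $x = c\mathbb{T}$ with $c \in [0,1)$ this supremum is exactly $m_f$; hence $|F(x)| \leqslant m_f$ for every $x \geqslant 0$.

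Next I would split
$$\int_a^b \frac{f(t)}{t}\,\mathrm{d}t = \mu_f \int_a^b \frac{\mathrm{d}t}{t} + \int_a^b \frac{\mathrm{d}F(t)}{t} = \mu_f \log\!\left( \frac{b}{a}\right) + \int_a^b \frac{\mathrm{d}F(t)}{t},$$
and perform a Riemann--Stieltjes integration by parts on the last integral, giving
$$\int_a^b \frac{\mathrm{d}F(t)}{t} = \frac{F(b)}{b} - \frac{F(a)}{a} + \int_a^b \frac{F(t)}{t^2}\,\mathrm{d}t.$$
Applying the bound $|F| \leqslant m_f$ term by term then yields
$$\left| \int_a^b \frac{\mathrm{d}F(t)}{t}\right| \leqslant \frac{m_f}{b} + \frac{m_f}{a} + m_f \int_a^b \frac{\mathrm{d}t}{t^2} = \frac{m_f}{b} + \frac{m_f}{a} + m_f\left( \frac{1}{a} - \frac{1}{b}\right) = \frac{2m_f}{a},$$
which is exactly the claimed $O^\star\!\left( 2m_f/a\right)$.

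I do not expect any genuine obstacle in this argument; the only two points deserving a word of care are the periodicity of $F$ together with the uniform bound $|F| \leqslant m_f$ (which relies on reading $m_f$ as the supremum of $|F|$ over a full period, the endpoint $c \to 1$ being harmless since $F(\mathbb{T}) = F(0) = 0$), and making sure the integration by parts is legitimate at the regularity assumed for $f$ — phrasing it in Riemann--Stieltjes form, or equivalently noting that $F$ is absolutely continuous, takes care of this.
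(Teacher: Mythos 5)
Your proposal is correct and follows essentially the same route as the paper: the author also introduces the antiderivative $G(x) = \int_0^x (f(t)-\mu_f)\,\mathrm{d}t$, uses its $\mathbb{T}$-periodicity (via $G(x) = G(\{x/\mathbb{T}\}\mathbb{T})$) to bound it by $m_f$, and then integrates by parts against $1/t$ to obtain the error term $2m_f/a$. No differences worth noting.
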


\begin{proof}
Set $g(t) := f(t) - \mu_f$, $\displaystyle L(a,b) := \int_a^b \frac{g(t)}{t} \, \mathrm{d}t$ and, for all $x \geqslant 1$, set $\displaystyle G(x) := \int_0^x g(t) \, \mathrm{d}t$. For all $x \in \R_{\geqslant 1}$ and all $n \in \Z$, we have
$$G \left( x+n \mathbb{T} \right) = G(x) + n G(\mathbb{T}) = G(x)$$
since $G(\mathbb{T}) = 0$. In particular with $n = - \lfloor x/\mathbb{T} \rfloor$, we get
$$G(x) = G \left( \left\lbrace \tfrac{x}{\mathbb{T}} \right\rbrace \mathbb{T} \right)$$
where $\{t\}$ is the fractional part of $t \in \R$. Integrating by parts, we derive
\begin{align*}
   L(a,b) &= \int_a^b \frac{1}{t} \, \textrm{d}G(t) = \frac{G(b)}{b} - \frac{G(a)}{a} + \int_a^b \frac{G(t)}{t^2} \, \mathrm{d}t \\
   & = \frac{1}{b} G \left( \left\lbrace \tfrac{b}{\mathbb{T}} \right\rbrace \mathbb{T} \right) -  \frac{1}{a} G \left( \left\lbrace \tfrac{a}{\mathbb{T}} \right\rbrace \mathbb{T} \right) + \int_a^b  G \left( \left\lbrace \tfrac{t}{\mathbb{T}} \right\rbrace \mathbb{T} \right) \, \frac{\mathrm{d}t}{t^2} 
\end{align*}
and therefore
$$\left| L(a,b) \right| \leqslant \max_{a \leqslant x \leqslant b} \left| G \left( \left\lbrace \tfrac{x}{\mathbb{T}} \right\rbrace \mathbb{T} \right) \right| \left( \frac{1}{b} + \frac{1}{a} + \int_a^b \frac{\textrm{d}t}{t^2} \right) = \frac{2m_f}{a}.$$
The proof follows with $\displaystyle \int_a^b \frac{f(t)}{t} \, \mathrm{d}t = \mu_f \int_a^b \frac{\mathrm{d}t}{t} + L(a,b)$.
\end{proof}

\begin{lemma}
\label{le:min}
For all $c \in \left[ 0,1 \right]$,
$$\left| \int_0^{2c \pi} \left( \left| \cos \tfrac{t}{2} \right| - \tfrac{2}{\pi} \right) \, \mathrm{d}t \right| \leqslant \tfrac{2}{\pi} \left( \sqrt{\pi^2-4} - 2 \arccos \tfrac{2}{\pi} \right).$$
\end{lemma}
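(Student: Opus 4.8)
The plan is to read the left-hand side as the extreme value, over one full period, of the primitive of $f(t)-\mu_f$, where $f(t):=\lvert\cos(t/2)\rvert$. First I would record that $f$ is $2\pi$-periodic and compute its mean over a period: since $f(t)=\cos(t/2)$ on $[0,\pi]$ and $f$ is symmetric about $t=\pi$, one has $\frac{1}{2\pi}\int_0^{2\pi}\lvert\cos(t/2)\rvert\,\mathrm{d}t=\frac{1}{2\pi}\cdot 2\int_0^{\pi}\cos(t/2)\,\mathrm{d}t=\frac{1}{2\pi}\cdot 4=\frac{2}{\pi}$, so the constant $\tfrac{2}{\pi}$ subtracted in the integrand is precisely $\mu_f$ in the sense of Lemma~\ref{le:moyenne_periodic_fct} (with $\mathbb{T}=2\pi$). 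Setting $G(x):=\int_0^x\bigl(\lvert\cos(t/2)\rvert-\tfrac{2}{\pi}\bigr)\,\mathrm{d}t$, the quantity to be bounded is $\lvert G(2c\pi)\rvert$, and as $c$ runs over $[0,1]$ the argument $2c\pi$ runs over $[0,2\pi]$, with $G(0)=G(2\pi)=0$; thus the statement is exactly the assertion $\max_{x\in[0,2\pi]}\lvert G(x)\rvert=\mathfrak{m}$.

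Next I would locate the critical points of $G$ on $[0,2\pi]$. Since $G'(x)=\lvert\cos(x/2)\rvert-\tfrac{2}{\pi}$, these are the solutions of $\lvert\cos(x/2)\rvert=\tfrac{2}{\pi}$, namely $x_0:=2\arccos\tfrac{2}{\pi}\in(0,\pi)$ and $x_1:=2\pi-x_0\in(\pi,2\pi)$. On $[0,\pi]$ the function $\cos(t/2)$ decreases from $1$ to $0$, so $G'\geqslant 0$ on $[0,x_0]$ and $G'\leqslant 0$ on $[x_0,\pi]$; on $[\pi,2\pi]$ the function $-\cos(t/2)$ increases from $0$ to $1$, so $G'\leqslant 0$ on $[\pi,x_1]$ and $G'\geqslant 0$ on $[x_1,2\pi]$. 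Hence $G$ increases on $[0,x_0]$, decreases on $[x_0,x_1]$, and increases on $[x_1,2\pi]$, so its global maximum on $[0,2\pi]$ is $G(x_0)>0$ and its global minimum is $G(x_1)<0$, and $\max_{[0,2\pi]}\lvert G\rvert=\max\bigl(G(x_0),-G(x_1)\bigr)$.

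To reconcile the two extrema I would invoke the symmetry $f(2\pi-t)=\lvert\cos(\pi-t/2)\rvert=\lvert\cos(t/2)\rvert=f(t)$; together with $G(2\pi)=0$ and the change of variable $t\mapsto 2\pi-t$ this gives $G(2\pi-x)=-G(x)$ for all $x$, so in particular $G(x_1)=-G(x_0)$ and therefore $\max_{[0,2\pi]}\lvert G\rvert=G(x_0)$. It then remains to evaluate $G(x_0)$: since $[0,x_0]\subset[0,\pi]$ we have $\lvert\cos(t/2)\rvert=\cos(t/2)$ there, whence $\int_0^{x_0}\cos(t/2)\,\mathrm{d}t=2\sin(x_0/2)=2\sin\arccos\tfrac{2}{\pi}=\tfrac{2}{\pi}\sqrt{\pi^2-4}$, while $\tfrac{2}{\pi}x_0=\tfrac{4}{\pi}\arccos\tfrac{2}{\pi}$; subtracting yields $G(x_0)=\tfrac{2}{\pi}\bigl(\sqrt{\pi^2-4}-2\arccos\tfrac{2}{\pi}\bigr)=\mathfrak{m}$, which is exactly the claimed bound. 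The only mildly delicate point is the sign bookkeeping that pins $\max\lvert G\rvert$ at $x_0$ rather than requiring a separate estimate of $\lvert G(x_1)\rvert$, and the reflection identity $G(2\pi-x)=-G(x)$ disposes of it cleanly.
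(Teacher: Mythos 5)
Your argument is correct and amounts to the same elementary computation as the paper: the paper writes out the integral as an explicit piecewise function of $c$ and maximizes its absolute value at $c=\tfrac{1}{\pi}\arccos\left(\pm\tfrac{2}{\pi}\right)$, which is exactly your critical-point analysis of the primitive $G$ at $x_0=2\arccos\tfrac{2}{\pi}$ and $x_1=2\pi-x_0$. Your reflection identity $G(2\pi-x)=-G(x)$ is a tidy way of dispensing with the second branch, but it does not change the substance of the proof.
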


\begin{proof}
Straightforwardly, for all $c \in \left[ 0,1 \right]$, we have
$$\int_0^{2c \pi} \left( \left| \cos \tfrac{t}{2} \right| - \tfrac{2}{\pi} \right) \, \textrm{d}t = \begin{cases} 2 \sign \left( \cos c \pi \right) \sin c \pi - 4c, & \textrm{if} \ 0 \leqslant c < \frac{1}{2} \, ; \\ & \\ 0, & \textrm{if} \ c = \frac{1}{2} \, ; \\ & \\ 2 \sign \left( \cos c \pi \right) \sin c \pi + 4(1-c) , & \textrm{if} \ \frac{1}{2} < c \leqslant 1 \end{cases}$$
and the result follows by observing that the absolute value of function on the right-hand side reaches a maximum, with value $\tfrac{2}{\pi} \left( \sqrt{\pi^2-4} - 2 \arccos \tfrac{2}{\pi} \right)$, at the points $\frac{1}{\pi} \arccos \left(  \pm \frac{2}{\pi} \right)$.
\end{proof}

\subsection{A Mertens type bound for $\left| \tau(n;v) \right|$}

\begin{lemma}
\label{le:tau_restreint_bis}
Set $B \approx \np{0.26149 72128 47643} \dotsc$ the Meissel-Mertens constant. Assume that there exist $a>0$, $r \in \Z_{\geqslant 2}$ and $T_1 \geqslant 2$ such that, for all $t \geqslant T_1$, we have
\begin{equation}
   \left| \pi(t) - \li(t) \right| \leqslant \frac{at}{(\log t)^r} \label{eq:H_1}
\end{equation}
and there exist $b>0$, $s \in \Z_{\geqslant 1}$ and $T_2 \geqslant 2$ such that, for all $t \geqslant T_2$, we have
\begin{equation}
\sum_{p \leqslant t} \frac{1}{p} \leqslant \log \log t + B + \frac{b}{(\log t)^s}. \label{eq:H_2}
\end{equation}
Set $\mathfrak{m} := \frac{2}{\pi} \left( \sqrt{\pi^2-4} - 2 \arccos \frac{2}{\pi} \right) $. Then, for all $q \geqslant 1$, all $\max \left(e^q, T_1,T_2\right) \leqslant T \leqslant x$ and all $v \in \left[ 0,1\right]$, we have
\begin{align*}
   \sum_{p \leqslant x} \frac{\left| \tau(p;v) \right|}{p} & < 2B + \frac{4a \zeta(r)}{(2 \pi)^r} + \frac{14a}{(\log T)^r} + \frac{2b}{(\log T)^s} \\
   &  + \begin{cases} 2 \log \log x , & \textrm{if} \ 0 \leqslant v \leqslant \frac{q}{\log x} \, ; \\ & \\ \frac{4}{\pi} \log \log x - \left( 2 - \frac{4}{\pi} \right) \log \left( \frac{v}{q}\right)  + 4\mathfrak{m}q^{-1}, & \textrm{if} \ \frac{q}{\log x} \leqslant v \leqslant \frac{q}{\log T} \, ; \\ & \\ \frac{4}{\pi} \log \log x +\frac{4}{\pi}  \log \left( \frac{v}{q}\right) + 2 \log \log T + 4\mathfrak{m}q^{-1}, & \textrm{if} \ \frac{q}{\log T} \leqslant v \leqslant 1. \end{cases}
\end{align*}
\end{lemma}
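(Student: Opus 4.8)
The plan is to estimate $\sum_{p\le x}\frac{|\tau(p;v)|}{p}$ by the elementary identity $\tau(p;v)=1+p^{iv}$, so $|\tau(p;v)|=|1+e^{iv\log p}|=2\bigl|\cos(\tfrac{v\log p}{2})\bigr|$. Hence the sum becomes $2\sum_{p\le x}\frac{1}{p}\bigl|\cos(\tfrac{v\log p}{2})\bigr|$, and the task is to pull out the ``average'' value $\tfrac{2}{\pi}$ of $|\cos|$ while controlling the error. First I would write $\bigl|\cos\tfrac{v\log p}{2}\bigr|=\tfrac{2}{\pi}+\bigl(\bigl|\cos\tfrac{v\log p}{2}\bigr|-\tfrac{2}{\pi}\bigr)$; the main term gives $\tfrac{4}{\pi}\sum_{p\le x}\frac1p\le \tfrac{4}{\pi}(\log\log x+B)+O(\cdot)$ via hypothesis \eqref{eq:H_2}, and for the $v$-small range $0\le v\le q/\log x$ one should instead just bound $|\cos|\le 1$ crudely, recovering the $2\log\log x$ term. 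So the substance is the oscillatory error term $E(x):=2\sum_{p\le x}\frac1p\bigl(\bigl|\cos\tfrac{v\log p}{2}\bigr|-\tfrac{2}{\pi}\bigr)$ for $v\ge q/\log x$.

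To handle $E(x)$ I would convert the prime sum to an integral against $d\pi(t)$, replace $\pi(t)$ by $\li(t)$ using \eqref{eq:H_1} on $[T_1,\infty)$, and then, since $\li(t)\sim\int\frac{dt}{\log t}$, reduce to estimating $\int_{T}^{x}\bigl(\bigl|\cos\tfrac{v\log t}{2}\bigr|-\tfrac2\pi\bigr)\frac{dt}{t\log t}$. The change of variable $u=v\log t$ turns this into $\tfrac1v\int \bigl(|\cos\tfrac u2|-\tfrac2\pi\bigr)\frac{du}{u}$ over an interval in $u$, which is exactly the shape handled by Lemma~\ref{le:moyenne_periodic_fct} applied to the $2\pi$-periodic function $u\mapsto|\cos\tfrac u2|$: its mean is $\mu_f=\tfrac2\pi$ and, by Lemma~\ref{le:min}, its ``fluctuation constant'' is $m_f\le\mathfrak m$. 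That lemma then yields a bound of the form $\dfrac{2m_f}{v\log T}\le \dfrac{2\mathfrak m}{v\log T}$ for the integral, and since $v\ge q/\log T$ on the relevant subrange (resp. $v\ge q/\log x$), one gets $\tfrac{2\mathfrak m}{q}$-type contributions, accounting for the $4\mathfrak m q^{-1}$ in the statement. The part of the prime range below $T$, i.e. $p\le T$, is bounded trivially by $2\sum_{p\le T}\tfrac1p\le 2\log\log T+2B+O(\cdot)$, which produces the extra $2\log\log T$ appearing in the third case.

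The remaining bookkeeping is: (a) the contribution of the ``zero-free-region error'' $\tfrac{at}{(\log t)^r}$ after partial summation — integrating $\tfrac{a}{(\log t)^{r+1}}\cdot\tfrac1t\cdot(\text{bounded})$ from $T_1$ (or $T$) to $\infty$ gives a convergent tail of size $\ll \tfrac{a}{(\log T)^r}$, plus a boundary term; summing the periodic weight against this error and being generous with constants is where the coefficients $\tfrac{4a\zeta(r)}{(2\pi)^r}$ and $\tfrac{14a}{(\log T)^r}$ come from (the $\zeta(r)$ arising because the genuine main-term integral $\int_2^\infty(\cdots)$, before truncation at $T$, contributes a convergent constant expressible through $\sum_{n\ge1}n^{-r}$ after expanding $|\cos|$ in its Fourier series or estimating period by period); (b) the error $\tfrac{b}{(\log T)^s}$ is carried directly from \eqref{eq:H_2} with a harmless factor $2$. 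For the middle range $q/\log x\le v\le q/\log T$ I would split at $t$ with $v\log t=q$, bound $|\cos|\le1$ for $p$ below that point (giving $-(2-\tfrac4\pi)\log(v/q)$ after writing $\log\log(e^{q/v})=\log(q/v)$ against the $\tfrac2\pi$-excess) and apply the periodic-average argument above for $p$ beyond it. The main obstacle is purely organizational: tracking all the boundary terms from the three successive partial summations/changes of variable and verifying that the crude constants $14$, $4\zeta(r)/(2\pi)^r$, $4\mathfrak m$ genuinely dominate them uniformly in $q\ge1$, $r\ge2$, $s\ge1$ and in the three ranges of $v$; no single estimate is deep, but the uniformity in all parameters simultaneously is delicate.
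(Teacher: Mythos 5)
Your plan is essentially the paper's own proof: bound $p\leqslant T$ trivially via \eqref{eq:H_2}, treat $T<p\leqslant x$ by partial summation splitting $\mathrm{d}\pi$ into $\mathrm{d}t/\log t$ plus the error $\mathrm{d}(\pi-\li)$ controlled by \eqref{eq:H_1} period by period (which is exactly where the paper's $4a\zeta(r)/(2\pi)^r+14a/(\log T)^r$ arises, via the Hurwitz zeta function), and evaluate the main integral after the substitution $u=v\log t$ with Lemma~\ref{le:moyenne_periodic_fct} and Lemma~\ref{le:min} giving the mean $\tfrac{2}{\pi}$ and the constant $\mathfrak{m}$, with the same trivial bound for $v\leqslant q/\log x$ and the same split at $e^{q/v}$ in the middle range. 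Your upfront decomposition $|\cos|=\tfrac{2}{\pi}+\text{(oscillation)}$ is only a cosmetic reorganization of the same argument, and the parts you leave open (the exact constants $14$, $4\mathfrak{m}q^{-1}$, etc.) are indeed just the bookkeeping the paper carries out explicitly.
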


\begin{proof}
If $v = 0$, then by \eqref{eq:H_2}
$$\sum_{p \leqslant x} \frac{\left| \tau(p;0) \right|}{p} = 2 \sum_{p \leqslant x} \frac{1}{p} \leqslant 2 \log \log x + 2B + \frac{2b}{(\log x)^s} < 2 \log \log x + 2B + \frac{2b}{(\log T)^s}$$
so that we may suppose $v > 0$. 
\item[]
Set $F(x) := \left| 1 + e^{ix} \right|$ and $R(x) := \pi(x) - \li(x)$. First by \eqref{eq:H_2}, we have
\begin{align}
   \sum_{p \leqslant x} \frac{\left| \tau(p;v) \right|}{p} &= \left( \sum_{p \leqslant T} + \sum_{T < p \leqslant x} \right) \frac{\left| \tau(p;v) \right|}{p} \notag \\
   & \leqslant 2 \sum_{p \leqslant T} \frac{1}{p} + \sum_{T < p \leqslant x} \frac{\left| \tau(p;v) \right|}{p} \notag \\
   & \leqslant 2 \log \log T + 2B + \frac{2b}{(\log T)^s} + \sum_{T < p \leqslant x} \frac{\left| \tau(p;v) \right|}{p}. \label{eq:preliminaries}
\end{align}
By partial summation
\begin{align*}   
   \sum_{T < p \leqslant x} \frac{\left| \tau(p;v) \right|}{p} &= \int_{T}^x \frac{F(v \log t)}{t} \, \textrm{d} \pi (t) \\
   &= \int_T^x \frac{F(v \log t)}{t \log t} \, \textrm{d}t + \int_{T^+}^x \frac{F(v \log t)}{t} \, \textrm{d} R(t) := I_1 + I_2.
\end{align*}
Let us first estimate $I_2$. Integrating by parts yields
\begin{align*}
   I_2 &= \frac{R(x)}{x} F(v \log x) - \frac{R(T)}{T} F(v \log T) - \int_T^x R(t) \, \textrm{d} \left( \frac{F(v \log t)}{t} \right) \\
   &= \frac{R(x)}{x} F(v \log x) - \frac{R(T)}{T} F(v \log T) \\
   & \hspace*{1cm} - \int_T^x \frac{R(t)}{t} \, \textrm{d} \left( F(v \log t) \right) + \int_T^x R(t) F(v \log t) \frac{\textrm{d}t}{t^2}.
\end{align*}
By \eqref{eq:H_1}, we respectively have:
\begin{enumerate}
   \item[]
   \item[\scriptsize $\triangleright$] $\displaystyle \left| \frac{R(x)}{x} F(v \log x) - \frac{R(T)}{T} F(v \log T) \right| \leqslant 2 \left( \frac{\left| R(x) \right|}{x} + \frac{\left| R(T) \right|}{T} \right) \leqslant \frac{4a}{(\log T)^r}$.
   \item[]
   \item[\scriptsize $\triangleright$] $\displaystyle \left| \int_T^x R(t) F(v \log t) \frac{\textrm{d}t}{t^2}\right| \leqslant 2 \int_T^x \frac{\left| R(t) \right|}{t^2} \, \textrm{d}t \leqslant 2a \int_T^\infty \frac{\textrm{d}t}{t (\log t)^r} = \frac{2a}{(\log T)^r}.$
   \item[]
   \item[\scriptsize $\triangleright$] For the $3$rd error term, set $H := \left \lfloor \frac{v}{2 \pi} \log \frac{x}{T} \right \rfloor$. By periodicity of $F$, we have
   \begin{align*}
      & \int_T^x \frac{R(t)}{t} \, \textrm{d} \left( F(v \log t) \right) \\
      & \hspace*{1cm} = \int_{v \log T}^{v \log x} R \left( e^{u/v} \right) e^{-u/v} \, \textrm{d} F(u) \\
      &\hspace*{1cm} = \sum_{h=0}^{H-1} \int_{v \log T + 2h \pi}^{v \log T + 2(h+1) \pi} R \left( e^{u/v} \right) e^{-u/v} \, \textrm{d} F(u) \\
      & \hspace*{2cm} + \int_{v \log T + 2H \pi}^{v \log x} R \left( e^{u/v} \right) e^{-u/v} \, \textrm{d} F(u) \\
      & \hspace*{1cm} = \sum_{h=0}^{H-1} \int_{v \log T}^{v \log T + 2 \pi} R \left( e^{(w+2h \pi)/v} \right) e^{-(w+2h \pi)/v} \, \textrm{d} F(w + 2 h \pi) \\
      & \hspace*{2cm} + \int_{v \log T + 2H \pi}^{v \log x} R \left( e^{u/v} \right) e^{-u/v} \, \textrm{d} F(u)\\
      & \hspace*{1cm} = \int_{v \log T}^{v \log T + 2 \pi}  \left( \sum_{h=0}^{H-1} R \left( e^{(w+2h \pi)/v} \right) e^{-(w+2h \pi)/v} \right) \textrm{d}F(w) \\
      & \hspace*{2cm} + \int_{v \log T + 2H \pi}^{v \log x} R \left( e^{u/v} \right) e^{-u/v} \, \textrm{d} F(u).
   \end{align*}
   Noticing that $v \log T + 2H \pi \in \left( v \log x - 2 \pi \, , \, v \log x \right]$, we have by \eqref{eq:H_1}
   \begin{align*}
      & \left| \int_{v \log T + 2H \pi}^{v \log x} R \left( e^{u/v} \right) e^{-u/v} \, \textrm{d} F(u) \right| \\
      & \hspace*{1cm} \leqslant \sup_{v \log T + 2H \pi \leqslant u \leqslant v \log x} \left( \left| R \left( e^{u/v} \right) \right| e^{-u/v} \right) \times V_{v \log T + 2H \pi}^{v \log x} \left( F \right) \\
      & \hspace*{1.5cm} \leqslant a \, \sup_{v \log T \leqslant u \leqslant v \log x} \left( \frac{v^r}{u^r} \right) \times V_{0}^{2 \pi} \left( 2 \left| \cos \tfrac{x}{2} \right| \right) \\
      & \hspace*{2cm} = \frac{2a}{(\log T)^r} \times  V_0^{2 \pi} \left( \left| \cos \tfrac{x}{2} \right| \right) = \frac{4a}{(\log T)^r}
   \end{align*}
   since $V_0^{2 \pi} \left( \left| \cos \tfrac{x}{2} \right| \right) = 2$, and, still using \eqref{eq:H_1} and setting $\zeta(s,a)$ the Hurwitz zeta function, we get
   \begin{align*}
      & \left| \int_{v \log T}^{v \log T + 2 \pi} \textrm{d}F(w) \, \sum_{h=0}^{H-1} R \left( e^{(w+2h \pi)/v} \right) e^{-(w+2h \pi)/v} \right| \\
      & \hspace*{3cm} \leqslant a \, \int_{v \log T}^{v \log T + 2 \pi} \sum_{h=0}^\infty \left( \frac{w+2h \pi}{v} \right)^{-r} \left| \textrm{d}F(w) \right| \\
      & \hspace*{3cm} = \frac{a v^r}{(2 \pi)^r}  \int_{v \log T}^{v \log T + 2 \pi} \zeta \left( r, \tfrac{w}{2 \pi}\right)  \left| \textrm{d}F(w) \right| \\
      & \hspace*{3cm} < \frac{a v^r}{(2 \pi)^r} \int_{v \log T}^{v \log T + 2 \pi} \left( \frac{(2 \pi)^r}{w^r} + \zeta(r) \right) \left| \textrm{d}F(w) \right| \\
      & \hspace*{3cm} \leqslant \frac{a v^r}{(2 \pi)^r} \left( \frac{(2 \pi)^r}{v^r (\log T)^r} + \zeta(r) \right) \int_{v \log T}^{v \log T + 2 \pi} \left| \textrm{d}F(w) \right| \\
      & \hspace*{3cm} = \frac{2a v^r}{(2 \pi)^r} \left( \frac{(2 \pi)^r}{v^r (\log T)^r} + \zeta(r) \right) V_0^{2 \pi} \left( \left| \cos \tfrac{x}{2} \right| \right) \\
      & \hspace*{3cm} \leqslant \frac{4a}{(\log T)^r} + \frac{4a \zeta(r)}{(2 \pi)^r} 
   \end{align*}
   where we used $v \leqslant 1$ in the ultimate line.
\end{enumerate}  
Thus
\begin{equation}
   \left| I_2 \right| < \frac{14a}{(\log T)^r} + \frac{4a \zeta(r)}{(2 \pi)^r}. \label{eq:I_2_bis}
\end{equation}
Now let us estimate $I_1$. If $0 < v \leqslant \frac{q}{\log x}$, the trivial bound $F(v \log t) \leqslant 2$ gives
\begin{equation}
   I_1 \leqslant 2 \int_T^x \frac{\textrm{d}t}{t \log t} = 2 \log \log x - 2 \log \log T. \label{eq:I_1_first_bis}
\end{equation}
If $\frac{q}{\log x} \leqslant v \leqslant \frac{q}{\log T}$, then
\begin{align*}
   I_1 & = \left( \int_T^{e^{q/v}} + \int_{e^{q/v}}^x \right) \frac{F(v \log t)}{t \log t} \, \textrm{d}t \\
   & \leqslant 2 \int_T^{e^{q/v}} \frac{\textrm{d}t}{t \log t} + \int_{e^{q/v}}^x \frac{F(v \log t)}{t \log t} \, \textrm{d}t \\
   & = - 2 \log \left( q^{-1} v \log T \right) + \int_q^{v \log x} \frac{F(u)}{u} \, \textrm{d}u \\
   & = 2 \left( -  \log \left( q^{-1} v \log T \right)  + \int_q^{v \log x} \frac{\left| \cos \frac{u}{2} \right|}{u} \, \textrm{d}u \right) \\
   &= 2 \left( \tfrac{2}{\pi} \log \left( q^{-1} v \log x  \right) -  \log \left( q^{-1} v \log T  \right) + O^{\star} ( 2\mathfrak{m}q^{-1} ) \right)
\end{align*}
where we used Lemma~\ref{le:moyenne_periodic_fct} with $a=q$, $b=v \log x$, $f(u) = \left| \cos \frac{u}{2} \right|$, $\mathbb{T} = 2 \pi$, $\mu_f = \frac{2}{\pi}$ and, by Lemma~\ref{le:min}, $m_f = \mathfrak{m}$ in the ultimate line. Thus, in this case,
\begin{equation}
   I_1 \leqslant 2 \left( \tfrac{2}{\pi} \log \log x - \log \log T + \left(\tfrac{2}{\pi} - 1 \right) \log\left( \tfrac{q}{v}\right) + 2\mathfrak{m}q^{-1} \right). \label{eq:I_1_second_bis}
\end{equation}
If $\frac{q}{\log T} \leqslant v \leqslant 1$, then
$$I_1 =  \int_{e^{q/v}}^x  \frac{F(v \log t)}{t \log t} \, \textrm{d}t - \int_{e^{q/v}}^T  \frac{F(v \log t)}{t \log t} \, \textrm{d}t \leqslant \int_{e^{q/v}}^x  \frac{F(v \log t)}{t \log t} \, \textrm{d}t$$
and the above calculations yield in this case
\begin{equation}
   I_1 \leqslant 2 \left( \tfrac{2}{\pi} \log \log x + \tfrac{2}{\pi} \log\left( \tfrac{q}{v}\right) + 2\mathfrak{m}q^{-1} \right). \label{eq:I_1_second_ter}
\end{equation}
The result then follows from \eqref{eq:preliminaries}, \eqref{eq:I_2_bis}, \eqref{eq:I_1_first_bis}, \eqref{eq:I_1_second_bis} and \eqref{eq:I_1_second_ter}.
\end{proof}

\subsection{Proof of \eqref{eq:explicit_Hooley}}

\subsubsection{Estimates for certain sums of primes}

\begin{lemma}
\label{le:sums_primes}
\begin{enumerate}
   \item[]
   \item[\scriptsize $\triangleright$] For all $x \geqslant 2$ and all $0 \leqslant v \leqslant 1$
   $$\frac{1}{x} \sum_{p \leqslant x} | \tau(p;v) | \log p \leqslant 2 + \np{3.88} \times 10^{-8}.$$
   \item[\scriptsize $\triangleright$] For all $0 \leqslant v \leqslant 1$
   $$\sum_p \sum_{\alpha \geqslant 2} \frac{\left | \tau\left( p^\alpha;v \right) \right|}{p^{\alpha}} < \np{2.9215}.$$
   \item[\scriptsize $\triangleright$] For all $0 \leqslant v \leqslant 1$  
   $$\sum_p \sum_{\alpha \geqslant 2} \frac{\left | \tau\left( p^\alpha;v \right) \right|}{p^{\alpha}} \log p^\alpha < \np{8.16001}.$$
\end{enumerate}
\end{lemma}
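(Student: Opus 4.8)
The engine behind all three estimates is the identity $\tau(p^\alpha;v)=\sum_{j=0}^{\alpha}p^{ijv}$, which immediately yields the bound $\bigl|\tau(p^\alpha;v)\bigr|\le\alpha+1=\tau(p^\alpha)$ uniformly in $v\in\R$; this is exactly \eqref{eq:trivial_tau}. I would prove each of the three displayed inequalities by discarding the oscillation of $\tau(\cdot\,;v)$ through this bound and then estimating the remaining $v$-free sum explicitly, so that all three bounds in fact hold for every real $v$.

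\textit{First inequality.} By \eqref{eq:trivial_tau},
$$\frac{1}{x}\sum_{p\le x}\bigl|\tau(p;v)\bigr|\log p\le\frac{2}{x}\sum_{p\le x}\log p ,$$
so the claim reduces to the Chebyshev-type estimate $\sum_{p\le x}\log p\le\bigl(1+\np{1.94}\times10^{-8}\bigr)x$, valid for every $x\ge2$. This in turn follows from an explicit form of the prime number theorem: one combines a (large) initial range in which $\sum_{p\le x}\log p<x$ has been verified numerically with an explicit bound for $\sum_{p\le x}\log p-x$ when $x$ is large. Within this approach the loss $\np{3.88}\times10^{-8}$ is essentially unavoidable, since $\bigl|\tau(p;0)\bigr|=2$.

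\textit{Second and third inequalities.} Again by \eqref{eq:trivial_tau} it suffices to bound the $v$-free sums $S_1:=\sum_p\sum_{\alpha\ge2}(\alpha+1)p^{-\alpha}$ and $S_2:=\sum_p\log p\sum_{\alpha\ge2}\alpha(\alpha+1)p^{-\alpha}$. Standard manipulations of the geometric series $\sum_{\alpha\ge0}t^\alpha=(1-t)^{-1}$, evaluated at $t=1/p$, give the closed forms
$$\sum_{\alpha\ge2}\frac{\alpha+1}{p^\alpha}=\frac{3p-2}{p(p-1)^2},\qquad\sum_{\alpha\ge2}\frac{\alpha(\alpha+1)}{p^\alpha}=\frac{2(3p^2-3p+1)}{p(p-1)^3} .$$
I would then split each of $S_1$ and $S_2$ at a moderate cut-off $P$ (a few hundred), compute the part $p\le P$ by direct summation, and control the tails $p>P$ via the crude comparisons $\dfrac{3p-2}{p(p-1)^2}<\dfrac{3}{(p-1)^2}$ and $\dfrac{2(3p^2-3p+1)\log p}{p(p-1)^3}<\dfrac{6p\log p}{(p-1)^3}$, together with explicit estimates for $\sum_{p>P}p^{-2}$ and $\sum_{p>P}p^{-2}\log p$ obtained by partial summation from \cite[(3.6)]{ros62} (or from the known value of the prime zeta function at $2$). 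Taking $P$ large enough yields $S_1<\np{2.9215}$ and $S_2<\np{8.16001}$.

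All the arithmetic is elementary; the only delicate point is the numerical endgame for $S_1$ and $S_2$, where the target constants lie close to the true values of the series, so the cut-off $P$ must be chosen reasonably large and the tails estimated using the sparsity of the primes — bounding $\sum_{p>P}(p-1)^{-2}$ by $\sum_{n>P-1}n^{-2}$ over all integers already loses too much. For the first inequality, the entire content is the choice of a sharp enough explicit bound for $\sum_{p\le x}\log p$; the rest is immediate.
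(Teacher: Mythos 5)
Your proposal follows essentially the same route as the paper: the trivial bound $|\tau(n;v)|\leqslant\tau(n)$, the closed forms $\frac{3p-2}{p(p-1)^2}$ and $\frac{2(3p^2-3p+1)}{p(p-1)^3}$, a numerically computed head plus an explicitly estimated tail, and, for the first bound, the explicit Chebyshev estimate $\theta(x)\leqslant(1+1.94\times10^{-8})x$. The only differences are cosmetic: the paper controls the tails with Lemma~\ref{le:Ramare} at the cut-offs $10^4$ and $10^6$ (your ``few hundred'' would be too small for the third bound, whose margin over the head is only about $10^{-5}$, as your own caveat anticipates), whereas you propose partial summation from Rosser--Schoenfeld, which works equally well.
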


The proof uses the next bound.

\begin{lemma}
\label{le:Ramare}
Let $f \in C^1 \left[ 2,\infty \right) $, $f \geqslant 0$, $f^{\, \prime} < 0$ and such that $f(t) = o(1/t)$ as $t \to \infty$. Set $c_1 := 1 + \np{1.94} \times 10^{-8}$. Then, for all $x \geqslant 2$
$$\sum_{p > x} f(p) \log p \leqslant c_1 \int_x^\infty f(t) \, \mathrm{d}t + (c_1-1)x f(x) + \frac{4xf(x)}{(\log x)^2}.$$
\end{lemma}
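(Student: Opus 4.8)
The plan is to realise $\sum_{p>x}f(p)\log p$ as a Riemann--Stieltjes integral against the Chebyshev function $\vartheta(t):=\sum_{p\le t}\log p$, integrate by parts, and then feed in explicit two-sided Chebyshev bounds. First I would note that we may assume $\int_x^\infty f(t)\,\mathrm{d}t<\infty$, since otherwise the right-hand side is $+\infty$ and there is nothing to prove; because $f>0$ and $f'<0$, integrability of $f$ forces $tf(t)\to0$, and together with $\vartheta(t)\ll t$ this also gives $\vartheta(t)f(t)\to0$. Then, for $x\ge2$, $\sum_{p>x}f(p)\log p=\int_x^\infty f(t)\,\mathrm{d}\bigl(\vartheta(t)-\vartheta(x)\bigr)$, and integration by parts (the boundary contributions vanishing by the above) gives
$$\sum_{p>x}f(p)\log p=-\vartheta(x)f(x)+\int_x^\infty\vartheta(t)\bigl(-f'(t)\bigr)\,\mathrm{d}t.$$

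Next I would extract the main term. Writing $\vartheta(t)=t+E(t)$ with $E(t):=\vartheta(t)-t$, and integrating the $t$-part by parts once more (so that $\int_x^\infty t\bigl(-f'(t)\bigr)\,\mathrm{d}t=xf(x)+\int_x^\infty f(t)\,\mathrm{d}t$), one arrives at the exact identity
$$\sum_{p>x}f(p)\log p=\int_x^\infty f(t)\,\mathrm{d}t-f(x)E(x)+\int_x^\infty E(t)\bigl(-f'(t)\bigr)\,\mathrm{d}t.$$
The cancellation producing the small coefficient $(c_1-1)$ on $xf(x)$, rather than $c_1$, is precisely the point of this split: the $xf(x)$ coming from $\int t(-f')$ is killed against $-\vartheta(x)f(x)$, leaving only the error term $-f(x)E(x)$.

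Now I would insert explicit Chebyshev bounds. Since $-f'(t)\ge0$, the integral term only needs an upper bound on $E(t)$: from an explicit Chebyshev estimate one has $\vartheta(t)\le c_1 t$, i.e.\ $E(t)\le(c_1-1)t$, for all $t\ge2$, whence
$$\int_x^\infty E(t)\bigl(-f'(t)\bigr)\,\mathrm{d}t\le(c_1-1)\int_x^\infty t\bigl(-f'(t)\bigr)\,\mathrm{d}t=(c_1-1)\Bigl(xf(x)+\int_x^\infty f(t)\,\mathrm{d}t\Bigr).$$
For the point term, $-f(x)E(x)\le0$ when $E(x)\ge0$, and when $E(x)<0$ the complementary lower bound $\vartheta(x)\ge x-\tfrac{4x}{(\log x)^2}$ (valid for all $x\ge2$) gives $-f(x)E(x)=f(x)\bigl(x-\vartheta(x)\bigr)\le\tfrac{4xf(x)}{(\log x)^2}$. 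Adding the three contributions yields exactly
$$\sum_{p>x}f(p)\log p\le c_1\int_x^\infty f(t)\,\mathrm{d}t+(c_1-1)xf(x)+\frac{4xf(x)}{(\log x)^2}.$$

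The main obstacle is purely the input of the two explicit Chebyshev inequalities with their sharp constants. The upper bound $\vartheta(t)\le c_1 t$ for every $t\ge2$ combines the fact that $\vartheta(t)<t$ holds up to a very large (Skewes-type) threshold --- established via verified zero computations together with an effective zero-free region --- with an effective prime number theorem beyond that threshold; the lower bound $\vartheta(t)\ge t-4t/(\log t)^2$ follows from an effective prime number theorem for large $t$, checked directly on the remaining bounded range of $t$. These are quoted from the literature; everything else is the formal integration-by-parts manipulation above.
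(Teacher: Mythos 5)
Your argument is correct, but it takes a different route from the paper: the paper's entire proof is a citation, namely that the statement is \cite[Lemma~3.2]{rama} applied with the improved parameter $\epsilon=c_1-1$, which is legitimate because $\theta(x)<c_1x$ for all $x\geqslant 0$ by \cite[Corollary~2.1]{bro21}; no partial summation is carried out in the paper at all. What you do instead is essentially reconstruct Ramar\'e's proof from scratch: the reduction to the case $\int_x^\infty f<\infty$, the Stieltjes integration by parts giving $\sum_{p>x}f(p)\log p=-\theta(x)f(x)+\int_x^\infty\theta(t)(-f'(t))\,\mathrm{d}t$, the split $\theta(t)=t+E(t)$ with the second integration by parts, and the two explicit Chebyshev inputs, are all sound, and your bookkeeping does reproduce exactly the constants $c_1$, $(c_1-1)$ and $4/(\log x)^2$ of the statement. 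The gain of your route is transparency -- it shows precisely that $c_1$ comes from the upper bound $\theta(t)\leqslant c_1t$ fed into both $\int_x^\infty f$ and $xf(x)$, while the $4xf(x)/(\log x)^2$ term comes only from the lower bound at the single point $x$ -- and it would let a reader sharpen either ingredient independently. The cost is that you must quote, with precise references, the two explicit estimates: the upper bound is exactly the Broadbent--Kadiri--Lumley--Ng--Wilk result the paper already cites, but the lower bound $\theta(x)\geqslant x-4x/(\log x)^2$ for \emph{all} $x\geqslant 2$ is asserted without a source; it is true (trivially for $x\leqslant e^2$ where the right-hand side is nonpositive, by a finite computation on a bounded range, and by Rosser--Schoenfeld/Dusart-type bounds beyond), but in a fully explicit paper this step needs its own citation or verification, which is precisely the burden the paper avoids by invoking Ramar\'e's lemma wholesale.
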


\begin{proof}
This is \cite[Lemma~3.2]{rama}, used with $\epsilon = c_1-1$ instead of $\epsilon = 1/\np{36260}$, since it is known that $\theta(x) < c_1 x$ for all $x \geqslant 0$ by \cite[Corollary~2.1]{bro21}.
\end{proof}

\begin{proof}[Proof of Lemma~\ref{le:sums_primes}]
We use \eqref{eq:trivial_tau}.
\begin{enumerate}
   \item[\scriptsize $\triangleright$] For all $x \geqslant 2$ and all $0 \leqslant v \leqslant 1$
   $$\frac{1}{x} \sum_{p \leqslant x} | \tau(p;v) | \log p \leqslant \frac{2 \theta(x)}{x} \leqslant 2c_1 = 2 + \np{3.88} \times 10^{-8}.$$
   \item[\scriptsize $\triangleright$] For all $0 \leqslant v \leqslant 1$
   $$\sum_p \sum_{\alpha \geqslant 2} \frac{\left | \tau\left( p^\alpha;v \right) \right|}{p^{\alpha}} \leqslant \sum_p \sum_{\alpha \geqslant 2} \frac{\alpha + 1}{p^\alpha} = \sum_p \frac{3p-2}{p(p-1)^2}.$$
   Lemma~\ref{le:Ramare} gives $\displaystyle \sum_{p > 10^4} \frac{3p-2}{p(p-1)^2} < \np{3.12} \times 10^{-5}$. Using \textsc{pari/gp}, we get $\displaystyle \sum_{p \leqslant 10^4} \frac{3p-2}{p(p-1)^2} < \np{2.92135}$, completing the proof of the second bound.
   \item[\scriptsize $\triangleright$] For all $0 \leqslant v \leqslant 1$  
   $$\sum_p \sum_{\alpha \geqslant 2} \frac{\left | \tau\left( p^\alpha;v \right) \right|}{p^{\alpha}} \log p^\alpha \leqslant \sum_p \log p \sum_{\alpha \geqslant 2} \frac{\alpha(\alpha + 1)}{p^\alpha} = 2\sum_p \frac{3p^2 - 3p + 1}{p(p-1)^3} \log p.$$
   As above, Lemma~\ref{le:Ramare} yields $\displaystyle \sum_{p > 10^6} \frac{2(3p^2 - 3p + 1)}{p(p-1)^3} \log p < \np{6.126} \times 10^{-6}$. With \textsc{pari/gp}, $\displaystyle \sum_{p \leqslant 10^6} \frac{2(3p^2 - 3p + 1)}{p(p-1)^3} \log p < \np{8.159998}$. This completes the proof.
\end{enumerate}
\end{proof}

\subsubsection{Finalization of the proof of \eqref{eq:explicit_Hooley}}

One may suppose $x \geqslant 4 \times 10^{41}$, since, otherwise, the trivial bound
\begin{align*}
   \sum_{n \leqslant x} t^{\omega(n)}\Delta(n) & \leqslant \sum_{n \leqslant x} \tau(n) \leqslant x \, \log(ex) \\
   & < \np{9380} \, x \,  (\log ex)^{-1} \leqslant \np{9380} \, x \,  (\log ex)^{-1+4t/\pi}
\end{align*}
is sufficient. For $z \in \R$ such that $|z| \leqslant 1$, set
$$\kappa(z) := \int_{-1}^1 e^{izv} \, \textrm{d}v = \frac{2\sin z}{z}.$$
Since $|z| \leqslant 1$, we have
$$\kappa(z) \geqslant \kappa(1) = 2 \sin(1).$$
Hence
\begin{align*}
   \Delta(n;u) & \leqslant \tfrac{1}{2 \sin(1)} \sum_{d \mid n} \kappa \left (\log d - u \right ) \leqslant \tfrac{1}{2 \sin(1)} \int_{-1}^1 \left | \tau(n;v) \right | \, \textrm{d}v \\
   & = \tfrac{1}{\sin(1)} \int_0^1 \left | \tau(n;v) \right | \, \textrm{d}v
\end{align*}
and therefore
$$\sum_{n \leqslant x} t^{\omega(n)}\Delta(n) \leqslant \tfrac{1}{\sin(1)} \int_0^1 \sum_{n \leqslant x} t^{\omega(n)} \left | \tau(n;v) \right | \, \textrm{d}v.$$
Note that the function $n \mapsto t^{\omega(n)} \left| \tau(n;v) \right|$ is multiplicative. Now Lemma~\ref{le:sums_primes} and the condition $t \leqslant 1$ enable us to use Lemma~\ref{le:mean} with $\left( a_1 ; a_2 ; a_3 \right) = \left( 2 + \np{3.88} \times 10^{-8} \, ; \, \np{2.9215} \, ; \, \np{8.17} \right)$, so that we are led to set
$$\lambda_{a,r} := \tfrac{1}{\sin(1)} \, e^{2B + \frac{4a \zeta(r)}{(2 \pi)^r} + \np{2.9215}} (\np{11.17} + \np{3.88} \times 10^{-8}).$$
Thus, Lemmas~\ref{le:mean} and~\ref{le:tau_restreint_bis} then yield, for all $q \geqslant 1$ and $\max \left( e^q,T_1,T_2\right) \leqslant T \leqslant x$,
\begin{align*}
   & \sum_{n \leqslant x} t^{\omega(n)} \Delta(n) \\
   & \leqslant \lambda_{a,r} \, e^{\frac{14a}{(\log T)^r} + \frac{2b}{(\log T)^s}} \, \frac{x}{\log ex} \, \Biggl\{ \int_0^{q/ \log x} \exp \left( 2 t \log \log x  \right) \, \textrm{d}v \\
   & \hspace*{1cm} + \int_{q/\log x}^{q/\log T} \exp\left( \tfrac{4t}{\pi} \log \log x -  \left( 2 - \tfrac{4}{\pi} \right)t \log \left( \tfrac{v}{q}\right) + 4mtq^{-1} \right) \, \textrm{d}v  \\
   & \hspace*{2cm} + \int_{q/\log T}^1 \exp\left( \tfrac{4t}{\pi} \log \log x + \tfrac{4t}{\pi} \log \left( \tfrac{v}{q}\right) + 2t \log \log T  + 4mtq^{-1} \right) \, \textrm{d}v \Biggr\} \\
   & \leqslant \lambda_{a,r} \, e^{\frac{14a}{(\log T)^r} + \frac{2b}{(\log T)^s}} \, x \, \Biggl\{ \frac{\pi e^{4mtq^{-1}}}{4t+\pi} (\log ex)^{-1+4t/\pi} \times \\
   & \hspace*{1cm} \left( q^{-4t/\pi} (\log T)^2 + \frac{2 \pi t q}{(\pi - 2t(\pi-2)) (\log T)^{1+t(4/\pi-2)}} \right)  \\
   & \hspace*{2cm} - \frac{q (\log x)^{2(t-1)} \left( \pi e^{4mtq^{-1}} + 2t(\pi-2) - \pi\right) }{\pi - 2t(\pi-2)} \Biggr\} \\
   & < \frac{\lambda_{a,r} \, \pi e^{4mtq^{-1}}}{4+\pi} \, e^{\frac{14a}{(\log T)^r} + \frac{2b}{(\log T)^s}} \, x \,  (\log ex)^{-1+4t/\pi} \times \\
   & \hspace*{2cm} \left( q^{-4t/\pi} (\log T)^2 + \frac{2 \pi t q}{(\pi - 2t(\pi-2)) (\log T)^{1+t(4/\pi-2)}} \right).
\end{align*}
By Lemma~\ref{le:TNP_explicite}, one may choose $\left( a;b;r;s;T_1;T_2 \right) = \left(\np{4.6};\np{0.6};3;2;2;127 \right)$. The respective choices of $q = \frac{3}{5} \log T$ and $T = e^{8.2} \approx \np{3640.95}$ give
$$\sum_{n \leqslant x} t^{\omega(n)} \Delta(n)  < \np{9380} \, x \,  (\log ex)^{-1+4t/\pi}$$
as required.
\qed

\subsection{Proof of Corollary~\ref{cor:Hooley_restricted_omega}}

Let $t \in \left( 0,1 \right)$ be a parameter at our disposal. Note that, for all $m,n \in \Z_{\geqslant 1}$ and $t \in \left[ 0,1 \right]$, we have
$$t^{\omega(mn)} \Delta (mn) \leqslant t^{\omega(m)} \Delta (m) \underbrace{t^{\omega(n)}}_{\leqslant 1} \tau(n) \leqslant t^{\omega(m)} \Delta (m) \tau(n)$$
so that the Main Theorem~\ref{th:main} applies with $f(n) = t^{\omega(n)} \Delta(n)$ and $k=2$, and along with Proposition~\ref{pro:Delta_bis}, it yields
\begin{align*}
   \sum_{\substack{x-y < x \leqslant x \\ \omega(n) \leqslant j}} \Delta(n) & \leqslant t^{-j} \sum_{x-y < n \leqslant x} t^{\omega(n)} \Delta (n) \\
   & \leqslant \Lambda(2,\ell) \frac{y t^{-j}}{\log x} \; \sum_{n \leqslant x} \frac{t^{\omega(n)}\Delta (n)}{n} \\
   & \leqslant \np{16748} \Lambda (2,\ell) \, y t^{-j} (\log ex)^{-1+4t/\pi}
\end{align*}
and choosing $t = \dfrac{\pi j}{4 \log \log x}$ gives the asserted result. Note that the condition $x > \exp \left( e^{\pi j/4} \right)$ ensures that $0 < t < 1$.
\qed

\subsection*{Acknowledgment}
Many thanks to our \TeX-pert for developing this class file.

\Addresses


\begin{thebibliography}{99}
   \bibitem{alz18} F. Alzahrani \& A. Salem, Sharp bounds for the Lambert $W$ function, \textit{Integral Transforms Spec. Funct.} \textbf{29} (2018), 971--978.
    \bibitem{bor20} O. Bordell\`{e}s, \textit{Arithmetic Tales. Advanced Edition}, Springer, Universitext, 2020.
   \bibitem{bortoth} O. Bordell\`{e}s \& L. T\'{o}th, Additive arithmetic functions meet the inclusion-exclusion principle, \textit{Lith. Math. J.} \textbf{62} (2022), 150--169.
   \bibitem{breten22} R. de la Bret\`{e}che \& G. Tenenbaum, Two upper bounds for the Erd\H{o}s-Hooley Delta-function, \textit{Sci. China, Math.} \textbf{66} (2023), 2683--2692.
   \bibitem{breten23} R. de la Bret\`{e}che \& G. Tenenbaum, Note on the mean value of the Erd\H{o}s-Hooley Delta-function, Preprint, arXiv:2309.03958 [math.NT] (2024).
  \bibitem{bro21} S. Broadbent, H. Kadiri, A. Lumley, N. Ng \& K. Wilk, Sharper bounds for the Chebyshev function $\theta(x)$, \textit{Math. Comput.} \textbf{90} (2021), 2281--2315.
   \bibitem{erd52} P. Erd\H{o}s, On the sum $\sum_{k=1}^x d(f(k))$, \textit{J. London Math. Soc.} \textbf{27} (1952), 7--15.
   \bibitem{friwa10} J. Friedlander \& H. Iwaniek, \textit{Opera de Cribro}, Amer. Math. Soc. Colloq. Publ., vol. 57, Amer. Math. Soc., Providence, RI., 2010.
   \bibitem{hall88} R. R. Hall \& G. Tenenbaum, \textit{Divisors}, Cambridge University Press, Cambridge Tracts in Math. \textbf{90}, 1988.
   \bibitem{hen12} K. Henriot, Nair-Tenenbaum bounds uniform with respect to the discriminant, \textit{Math. Proc. Camb. Philos. Soc.} \textbf{152} (2012), 405--424.
   \bibitem{hoo79} C. Hooley, On a new technique and its applications to the theory of numbers, \textit{Proc. London Math. Soc.} \textbf{38} (1979), 115--151. 
   \bibitem{hoor08} A. Hoorfar \& M. Hassani, Inequalities on the Lambert W function and hyperpower function, \textit{J. Inequal. Pure Appl. Math.} \textbf{9(2)} (2008), No. 51, 5 p.
   \bibitem{john22} D. R. Johnston, Improving bounds on prime counting functions by partial verification of the Riemann hypothesis, \textit{The Ramanujan J.} \textbf{59} (2022), 1307--1321. 
   \bibitem{lan87} B. Landreau, \textit{Majorations de fonctions arithm\'{e}tiques en moyenne sur des ensembles de faible densit\'{e}}, Th\`{e}se de Doctorat, Universit\'{e} de Bordeaux 1, 1987 (French).
   \bibitem{koutao23} D. Koukoulopoulos \& T. Tao, An upper bound on the mean value of the Erd\H{o}s-Hooley delta function, \textit{Proc. Lond. Math. Soc.}(3) \textbf{127} (2023), 1865--1885.
   \bibitem{man23} A. P. Mangerel, Divisor-bounded multiplicative functions in short intervals, \textit{Res. Math. Sci. 10}, Paper \textbf{12} (2023), 47 pp.
   \bibitem{mat16} K. Mat\"{o}m\"{a}ki \& M. Radzivi\l\l, Multiplicative functions in short intervals, \textit{Ann. Math. (2)} \textbf{183} (2016), 1015--1056.
   \bibitem{mat20} K. Mat\"{o}m\"{a}ki \& M. Radzivi\l\l, Multiplicative functions in short intervals II, Preprint, arXiv:2007.04290 [math.NT] (2020).
   \bibitem{nair92} M. Nair, Multiplicative functions of polynomial values in short intervals, \textit{Acta Arith.} \textbf{62} (1992), 257--269.
   \bibitem{nair98} M. Nair \& G. Tenenbaum, Short sums of certain arithmetic functions, \textit{Acta Math.} \textbf{180} (1998), 119--144.
   \bibitem{pol20} P. Pollack, Nonnegative multiplicative functions on sifted sets, and the square roots of $-1$ modulo shifted primes, \textit{Glasg. Math. J.} \textbf{62} (2020), 187--199.
   \bibitem{rama} O. Ramar\'{e}, An explicit density estimate for Dirichlet $L$-series, \textit{Math. Comp.} \textbf{85} (2016), 335--356.
   \bibitem{ros62} J. B. Rosser \& L. Sch\oe nfeld, Approximate formulas for some functions of prime numbers, \textit{Illinois J. Math.} \textbf{6} (1962), 64--94.
   \bibitem{san89} J. S\'{a}ndor, On the arithmetical function $d_k(n)$, \textit{Math., Rev. Anal. Num\'{e}r. Th\'{e}or. Approximation, Anal. Num\'{e}r. Th\'{e}or. Approximation} \textbf{18} (1989), 89--94.
   \bibitem{saou15} Y. Saouter, T. Trudgian \& P. Demichel, A still sharper region where $\pi(x) - \Li(x)$ is positive, \textit{Math. Comp.} \textbf{84} (2015), 2433--2446.
   \bibitem{shiu80} P. Shiu, A Brun-Titchmarsh theorem for multiplicative functions, \textit{J. Reine Angew. Math.} \textbf{313} (1980), 161--170.
   \bibitem{sun24} Y.-C. Sun, On divisor bounded multiplicative functions in short intervals, Preprint, arXiv:2401.08432 [math.NT] (2024).
   \bibitem{wol71} D. Wolke, Multiplikative Funktionen auf schnell wachsenden Folgen, \textit{J. Reine Angew. Math.} \textbf{251} (1971), 54--67.
\end{thebibliography}
\end{document}